 \newcommand{\bfi}{\begin{fig}}
 \newcommand{\efi}{\end{fig}}
 \newcommand{\btab}{\begin{tab}}
 \newcommand{\etab}{\end{tab}}
 \newcommand{\barr}{\begin{array}}
 \newcommand{\earr}{\end{array}}
 \newcommand{\beqq}{\begin{equation}}
 \newcommand{\eeqq}{\end{equation}}
 \newcommand{\beao}{\begin{eqnarray*}}
 \newcommand{\eeao}{\end{eqnarray*}\noindent}
 \newcommand{\beam}{\begin{eqnarray}}
 \newcommand{\eeam}{\end{eqnarray}\noindent}
 \newcommand{\bdis}{\begin{displaymath}}
 \newcommand{\edis}{\end{displaymath}\noindent}
 \newcommand{\ben}{\begin{enumerate}} 
 \newcommand{\een}{\end{enumerate}} 
 \newcommand{\bali}{\begin{align}} 
 \newcommand{\eali}{\end{align}} 
 \newcommand{\bbn}{\mathbb{N}}
 \newcommand{\bbz}{\mathbb{Z}}
 \newcommand{\bbr}{\mathbb{R}}
 \newcommand{\scrb}{{\mathscr{B}}}
 \newcommand{\al}{{\alpha}}
 \newcommand{\bbE}{{\mathbb{E}}}
 \newcommand{\bbP}{{\mathbb{P}}}
\newtheorem{satz}{Theorem}[section]
\newtheorem{proposition}[satz]{Proposition}
\newtheorem{lemma}[satz]{Lemma}
\newtheorem{definition}[satz]{Definition}
\theoremstyle{definition}
\newtheorem{bemerkung}[satz]{Remark}
\newtheorem{beispiel}[satz]{Example}
\numberwithin{equation}{section}
\title{Stationarity and Geometric Ergodicity of BEKK Multivariate GARCH Models}
\author{Farid Boussama \thanks{D\'epartement Informatique et Math\'ematique, Universit\'e Montpellier 1, 39 Rue de l'Universit\'e, 34000 Montpellier, France. 
\emph{Email:} \ttfamily{boussama@univ-montp1.fr} } \and 
Florian Fuchs \thanks{TUM Institute for Advanced Study \& Zentrum Mathematik, Technische Universit\"at M\"unchen, Boltzmannstra\ss e 3, D-85748 Garching, Germany.
\emph{Email:} \ttfamily{ffuchs@ma.tum.de, www-m4.ma.tum.de}} \and 
Robert Stelzer \thanks{Institute of Mathematical Finance, Ulm University, Helmholtzstra\ss e 18, D-89081 Ulm, Germany. 
\emph{Email:} \ttfamily{robert.stelzer@uni-ulm.de, www.uni-ulm.de/mawi/finmath.html}}}
\date{}
\begin{document}
\maketitle
\begin{abstract}
Conditions for the existence of strictly stationary multivariate GARCH processes in the so-called BEKK parametrisation, which is the  most general form of multivariate
GARCH processes typically used in applications, and for their geometric ergodicity are obtained. The conditions are  that the driving noise is absolutely continuous with
respect to the Lebesgue measure and zero is in the interior of its support and that a certain matrix built from the GARCH coefficients has spectral radius smaller than one.

To establish the results semi-polynomial Markov chains are defined and analysed using algebraic geometry.
\end{abstract}
\vspace{0.5cm}
\noindent
\begin{tabbing}
\emph{AMS Subject Classification 2010: }\=Primary:\, 60J05 \,   \, \\  
\> Secondary: \, 60B99, \, 62M10, \,  91G70
\end{tabbing}
\vspace{0.5cm}
\noindent
\emph{Keywords:} $\beta$-mixing, Foster-Lyapunov drift condition, geometric ergodicity, Harris recurrence, multivariate GARCH, stationarity, stochastic volatility
%
%
\section{Introduction}
Generalised autoregressive conditionally heteroskedastic (GARCH) processes (originally introduced by \cite{Bollerslev1986,Engle1982}) are heavily used in various areas of
applications for the modelling of heteroskedastic time series data. Very often one  has to model several interrelated time series with an appropriate multidimensional 
model. Since for multivariate GARCH processes the latent volatility process  needs to take values in the positive semi-definite matrices, as it has to correspond to a
covariance matrix at each point in time, the multivariate GARCH models are typically considerably more involved than the univariate one. For an overview over the various
multivariate GARCH models existing and their applications we refer to \cite{Bauwensetal2006,SilvennoinenTeraesvirta2009}. As always in time series modelling, existence
and uniqueness of stationary solutions, as well as convergence to the stationary solution is of high importance. While this is not so hard a question for some multivariate
GARCH specifications where -- as in the Constant-Conditional-Correlation (CCC) Model or its extensions -- the variances are modelled by univariate GARCH models, we
address this question here for the very general Baba, Engle, Kraft and Kroner (BEKK) GARCH model introduced in \cite{ENGMUL} where all (co)variances influence each
other in the time dynamics. The BEKK model is almost the most general multivariate GARCH model existing. Only the vec model also introduced in \cite{ENGMUL} is more
general, but all vec models not representable in the BEKK parametrisation are somewhat degenerate (see \cite{STEREL}). Moreover, the restrictions on the parameters
necessary to ensure a proper GARCH model are more or less not practicably formulatable in the vec model. Hence, the BEKK model is the most general one normally used.

To prove our results we employ  Markov chain theory combined with algebraic geometry to obtain the proper state spaces and properties like irreducibility. In particular, we
analyse stationarity and ergodicity for a general class of Markov chains which we call ``semi-polynomial'', because they generalise the polynomial Markov chains of
\cite{MOKPROP}, and then apply the general results to the special case of multivariate GARCH processes. Our approach in the present paper is most similar to the PhD
thesis of the first author \cite{BOUGARCH}, which, although it has never been published in an  accessible way (except for a summary of the main results without proofs in
\cite{Boussama2006}), has been relied upon in essential ways (see e.g. \cite{ComteLieberman2003}). Unfortunately, the statements and proofs in that thesis contain some
problematic issues and, hence,  details of the proofs and statements given in the present paper deviate in essential ways. One difference, for instance, is the use of (weak)
Feller chains, another regards the proper state spaces.   

The remainder of this paper is structured as follows. Below we briefly summarise some general notation. In Section \ref{sec:mgarch} we give a detailed definition of BEKK
GARCH models, their vec and vech parametrisations, state our main result on the stationarity and ergodicity of  multivariate GARCH processes and discuss its implications.
Thereafter, we define and analyse semi-polynomial Markov chains in Section \ref{sec:semipol}. Finally, we proof our main result for multivariate GARCH processes in
Section \ref{sec:mgarchproof}. A brief summary of some notions of algebraic geometry necessary to understand the statements of our main results on GARCH processes is
given in the appendix. There we have also collected some results from the theory of Markov chains which we are going to use in the proof of Theorem \ref{Satz 1.1.18}.
\subsection*{Notation}
For the natural numbers excluding zero we write $\mathbb{N}^*$.

We denote the set of real \(n\times d\) matrices by \(M_{n\times d}(\mathbb{R})\), the vector space of real \(d\times d\) matrices 
by \(M_d(\mathbb{R})\), the linear subspace of symmetric matrices by \(\mathbb{S}_d\), the positive semi-definite cone by \(\mathbb{S}_d^+\) 
and the (strictly) positive definite matrices by \(\mathbb{S}_d^{++}\). For a positive definite and positive semi-definite matrix \(A\in\mathbb{S}_d\) we
also write \(A>0\) and \(A\geq 0\), respectively. The transpose of a matrix \(A\in M_{n\times d}(\mathbb{R})\) will be denoted by \(A^t\) and the $d\times d$ identity
matrix by ${I}_d$. If the dimension is obvious from the context, we sometimes neglect the subscript.

Every matrix \(A\in M_{n\times d}(\mathbb{R})\) can be considered as a vector in \(\mathbb{R}^{nd}\) using the bijective \(\mathrm{vec}\)
transformation which stacks the columns of a matrix below one another beginning with the leftmost one. In the case of symmetric matrices, one often uses
the \(\mathrm{vech}\) transformation which maps \(\mathbb{S}_d\) bijectively to \(\mathbb{R}^{d(d+1)/2}\) by stacking the lower triangular portion of a matrix.
For instance, the matrix
\[\begin{pmatrix} a & b & c \\ b & d & e \\ c & e & f \end{pmatrix} \in \mathbb{S}_3\subseteq M_{3\times 3}(\bbr)\]
is mapped to the vector $(a,b,c,b,d,e,c,e,f)^t\in\bbr^9$ by the $\mathrm{vec}$ operator and to $(a,b,c,d,e,f)^t\in\bbr^6$ by the $\mathrm{vech}$ operator.
Finally, we denote for two matrices \(A\in M_{n\times d}(\mathbb{R})\) and \(B\in M_{r\times m}(\mathbb{R})\) the tensor
(Kronecker) product by \(A\otimes B\). 

For the relevant background on Markov chains and mixing we refer to any of the standard references, for instance, \cite{Doukhan1994, Meynetal1993}. 
%
%
\section{Stationarity and Geometric Ergodicity of BEKK Multivariate GARCH Models}
\label{sec:mgarch}
When one moves from a single-dimensional to a \(d\)-dimensional GARCH process, the univariate variance process becomes a \(d\times d\) covariance
matrix process \(\Sigma\). 
In the so-called {vec parametrisation} (see \cite{ENGMUL}) the general multivariate GARCH($p,q$) model with $p,q\in \mathbb{N}$ is given by
\begin{align}
 X_n &= \Sigma_n^{1/2} \epsilon_n, \label{3.3} \\
 \mathrm{vec}(\Sigma_n) &= \mathrm{vec}(C) + \sum\limits_{i=1}^q\tilde{A}_i\ \mathrm{vec}(X_{n-i}X_{n-i}^t) + 
  \sum\limits_{j=1}^p\tilde{B}_j\ \mathrm{vec}(\Sigma_{n-j}) \label{3.4}
\end{align}
for \(n\in\mathbb{N}^*\) where \((\epsilon_n)_{n\in\left.\mathbb{N}\right.^*}\) is  an \(\mathbb{R}^d\)-valued i.i.d.\ sequence and \(\Sigma_n^{1/2}\) denotes
the unique positive semi-definite square root of $\Sigma_n$ (i.e. the unique element \(\Sigma_n^{1/2}\) of $\mathbb{S}_d^+$ such that
$\Sigma_n=\Sigma_n^{1/2}\Sigma_n^{1/2})$. To ensure the positive semi-definiteness of the covariance matrix process \(\Sigma\) the initial values
\(\Sigma_0,\ldots,\Sigma_{1-p}\) and \(C\) have to be positive semi-definite and \(\tilde{A}_1,\ldots,\tilde{A}_q\) as well as \(\tilde{B}_1,\ldots,\tilde{B}_p\) need to 
be \(d^2\times d^2\) matrices mapping the vectorised positive semi-definite matrices into themselves. The initial values $X_0,X_{-1},\ldots, X_{1-q}$ may be arbitrary
elements of $\mathbb{R}^d$.

The restriction on the linear operators \(\tilde{A}_i\) and \(\tilde{B}_j\) necessary to ensure positive semi-definiteness gave rise to the so-called BEKK model (see again
\cite{ENGMUL}) which automatically ensures positive semi-definiteness:
\begin{align}
 X_n &= \Sigma_n^{1/2} \epsilon_n, \label{3.5} \\
 \Sigma_n &= C + \sum\limits_{i=1}^q\sum\limits_{k=1}^{l_i}\bar{A}_{i,k}X_{n-i}X_{n-i}^t\bar{A}_{i,k}^t + 
  \sum\limits_{j=1}^p\sum\limits_{r=1}^{s_j}\bar{B}_{j,r}\Sigma_{n-j}\bar{B}_{j,r}^t, \label{3.6}
\end{align}
where \(\bar{A}_{i,k}\) and \(\bar{B}_{j,r}\) are now arbitrary elements of \(M_d(\mathbb{R})\). \\
The BEKK model is equivalent to the \(\mathrm{vec}\) model with \(\tilde{A}_i=\sum_{k=1}^{l_i}\bar{A}_{i,k}\otimes\bar{A}_{i,k},\ i=1,\ldots,q\), and
\(\tilde{B}_j=\sum_{r=1}^{s_j}\bar{B}_{j,r}\otimes\bar{B}_{j,r},\ j=1,\ldots,p\). More details of the relations between \(\mathrm{vec}\) and BEKK GARCH models
are given in \cite{STEREL}.

If we take the symmetry of the matrices \(\Sigma_n\) into account, we can write the \(\mathrm{vec}\) (and thus the BEKK) model also in the 
\(\mathrm{vech}\) representation:
\begin{align}
 X_n &= \Sigma_n^{1/2} \epsilon_n, \label{3.7} \\
 \mathrm{vech}(\Sigma_n)   &= \mathrm{vech}(C) + \sum\limits_{i=1}^q A_i\ \mathrm{vech}(X_{n-i}X_{n-i}^t) + \sum\limits_{j=1}^p B_j\ 
  \mathrm{vech}(\Sigma_{n-j}), \label{3.8}
\end{align}
where \(A_i=H_d\tilde{A}_iK_d^t,\ B_j=H_d\tilde{B}_jK_d^t,\ i=1,\ldots,q,\ j=1,\ldots,p\) and the matrices \(H_d\) and \(K_d\) are the unique 
$H_d,K_d\in M_{\frac{d(d+1)}{2}\times d^2}(\mathbb{R})$ such that 
\[\mathrm{vech}(D)=H_d\ \mathrm{vec}(D),\ \mathrm{vec}(D) = K_d^t\ \mathrm{vech}(D)\quad\text{and}\quad H_dK_d^t={I}_{d(d+1)/2}\]
for every $D\in\mathbb{S}_d$, whose existence and uniqueness follows immediately, since both $\mathrm{vec}$ and $\mathrm{vech}$ are linear operators.
\begin{beispiel}
 Let us give a simple example for the GARCH equations when $d=2$. Consider the following BEKK model with $p=q=l_1=s_1=1$:
 \begin{equation}
  \Sigma_n = C + \begin{pmatrix} a & c \\ b & d \end{pmatrix}\,X_{n-1}X_{n-1}^t\,\begin{pmatrix} a & b \\ c & d \end{pmatrix} + 
   \begin{pmatrix} e & g \\ f & h \end{pmatrix}\,\Sigma_{n-1}\,\begin{pmatrix} e & f \\ g & h \end{pmatrix} \label{Equation 3.19}
 \end{equation}
 where $a,b,c,d,e,f,g,h$ are arbitrary real numbers. 

 From (\ref{Equation 3.19}) one immediately derives the corresponding $\mathrm{vec}$ model:
 \begin{equation}
  \mathrm{vec}(\Sigma_n)=\mathrm{vec}(C)+\begin{pmatrix}a^2 & ac & ca & c^2 \\ ab & ad & cb & cd \\ ba & bc & da & dc \\ b^2 & bd & db & d^2\end{pmatrix}\,
   \mathrm{vec}(X_{n-1}X_{n-1}^t) + \begin{pmatrix}e^2 & eg & ge & g^2 \\ ef & eh & gf & gh \\ fe & fg & he & hg \\ f^2 & fh & hf & h^2
   \end{pmatrix}\,\mathrm{vec}(\Sigma_{n-1}). \label{Equation 3.20}
 \end{equation}
 
 Since the matrices $H_2$ and $K_2$ are given by 
 \[H_2=\begin{pmatrix} 1 & 0 & 0 & 0 \\ 0 & 1 & 0 & 0 \\ 0 & 0 & 0 & 1 \end{pmatrix} \quad \text{and} \quad
  K_2=\begin{pmatrix} 1 & 0 & 0 & 0 \\ 0 & 1 & 1 & 0 \\ 0 & 0 & 0 & 1 \end{pmatrix},\]
 the associated $\mathrm{vech}$ parametrisation of (\ref{Equation 3.19}) and (\ref{Equation 3.20}) is 
 \[\mathrm{vech}(\Sigma_n) = \mathrm{vech}(C) + \begin{pmatrix} a^2 & 2ac & c^2 \\ ab & ad+ bc & cd \\ b^2 & 2bd & d^2 \end{pmatrix}\,
  \mathrm{vech}(X_{n-1}X_{n-1}^t) + \begin{pmatrix} e^2 & 2eg & g^2 \\ ef & eh + fg & gh \\ f^2 & 2fh & h^2 \end{pmatrix}\,\mathrm{vech}(\Sigma_{n-1}).\]
\end{beispiel}
\begin{definition}
 Let \((X_n)_{n\in\left.\mathbb{N}\right.^*}\)  be a \textnormal{GARCH}\((p,q)\) process in the BEKK representation satisfying 
 \begin{enumerate}
  \item
   \(C\) and the initial values \(\Sigma_{0},\ldots,\Sigma_{1-p}\) are positive definite,
  \item
   \((\epsilon_n)_{n\in\left.\mathbb{N}\right.^*}\) is an \(\mathbb{R}^d\)-valued i.i.d.\ sequence with distribution 
    \(\Gamma\), $\epsilon_1\in L^2$, \(\mathbb{E}[\epsilon_1]=0\) and \(\mathbb{E}[\epsilon_1\epsilon_1^t]={I}_d\),
  \item 
    \((\epsilon_n)_{n\in\left.\mathbb{N}\right.^*}\) is independent of 
     \(\mathcal{F}_{0}=\sigma(X_{1-q},X_{2-q},\ldots,X_0,\Sigma_{1-p},\Sigma_{2-p},\ldots, \Sigma_0)\).
 \end{enumerate}
 Then  \((X_n)_{n\in\left.\mathbb{N}\right.^*}\) is called \emph{standard \textup{GARCH}\((p,q)\) process}.
\end{definition}
$\mathbb{E}[\epsilon_1\epsilon_1^t]={I}_d$ is a simple normalisation making the volatility process identifiable and, hence, not really a restriction.
\begin{bemerkung}
 In \(X_n=\Sigma_n^{1/2}\epsilon_n\) one could also choose another transformation \(G(\Sigma_n)\) of the conditional covariance matrix \(\Sigma_n\) such that
 $G(\Sigma_n)^tG(\Sigma_n)=\Sigma_n$ instead of taking the square root. Boussama \cite{BOUGARCH} takes \(G(\Sigma_n)\) as a lower triangular matrix resulting 
 from the Cholesky decomposition. However, the exact form of this transformation does not have any impact on our results concerning stationarity and ergodicity
 provided that \(G\) is an appropriate ``smooth'' transformation such that the GARCH process fits into the setting of semi-polynomial Markov chains studied in 
 Section \ref {sec:semipol}.
 \label{Bemerkung 3.2.4}
\end{bemerkung}
Using the $\mathrm{vech}$ representation we embed a standard GARCH($p,q$) process into a Markov chain. Setting 
$\mathscr{C} := (\mathrm{vech}(C)^t,0,0,\ldots,0)^t\in\left(\mathbb{R}^{d(d+1)/2}\right)^p\times\left(\mathbb{R}^d\right)^q$ snd defining the process
\((Y_n)_{n\in\left.\mathbb{N}\right.^*}\) in \(\left(\mathbb{R}^{d(d+1)/2}\right)^p\times\left(\mathbb{R}^d\right)^q\) by
$Y_n:=\left(\mathrm{vech}(\Sigma_{n})^t,\mathrm{vech}(\Sigma_{n-1})^t,\ldots,\mathrm{vech}(\Sigma_{n-p+1})^t,X_n^t,X_{n-1}^t,\ldots,X_{n-q+1}^t\right)^t,$
one easily sees that
\[Y_n = \mathscr{C} + \begin{pmatrix}\sum\limits_{i=1}^q A_i\ \mathrm{vech}(X_{n-i}X_{n-i}^t) + \sum\limits_{j=1}^p B_j\ 
 \mathrm{vech}(\Sigma_{n-j})\\ \mathrm{vech}(\Sigma_{n-1})\\ \vdots\\ \mathrm{vech}(\Sigma_{n-p+1})\\ X_n\\ X_{n-1}\\ \vdots\\ X_{n-q+1}\end{pmatrix}.\]
From this one obtains immediately a regular (in the sense of  Definition \ref{Regulaere Abbildung}) map 
\[\varphi: \left(\left(\mathbb{R}^{d(d+1)/2}\right)^p\times\left(\mathbb{R}^d\right)^{q}\right) 
 \times\mathbb{R}^d\to\left(\mathbb{R}^{d(d+1)/2}\right)^p\times\left(\mathbb{R}^d\right)^{q}\mbox{ such that }Y_n = \varphi(Y_{n-1},X_n).\]
Next we need to define the set $W$ which the stationary standard GARCH($p,q$) process  takes its values in, as is to be seen. Set
\[\tilde{B} := \begin{pmatrix} B & 0 \\ 0 & 0 \end{pmatrix}\in M_{p\frac{d(d+1)}{2}+qd}(\mathbb{R})\,\mbox{ with }\,
 B=\begin{pmatrix}B_1    & B_2    & \ldots & B_{p-1} & B_p \\ I  & 0   & \ldots & 0   & 0  \\ 0    & I     & \ddots & \vdots  & 0  \\ 
 \vdots & \ddots & \ddots & 0   & \vdots \\ 0    & \ldots & 0      & I       & 0 \end{pmatrix}\in M_{p\frac{d(d+1)}{2}}(\mathbb{R})\]               
and consider the point $T$ satisfying 
\begin{equation}
 \label{eq:fixedpoint}
 T = \mathscr{C} + \tilde{B} T.  
\end{equation}
Existence and uniqueness of this point under the conditions of Theorem \ref{th:maintheo} is shown in Section \ref{Section 3.3.3}. 
Furthermore, we define $\varphi^n$ for $n\in \mathbb{N}^*$ recursively by $\varphi^1:=\varphi$ and 
\begin{align*}
 \varphi^{n+1}:\left(\left(\mathbb{R}^{d(d+1)/2}\right)^p\times\left(\mathbb{R}^d\right)^{q}\right)
  \times(\mathbb{R}^d)^{n+1}&\to\left(\mathbb{R}^{d(d+1)/2}\right)^p\times\left(\mathbb{R}^d\right)^{q}, \\ 
  \varphi^{n+1}(y, x_1,\ldots, x_n,x_{n+1})&=\varphi(\varphi^n(y,x_1,\ldots, x_n),x_{n+1})
\end{align*}
and set
\begin{align}
 W &= \raisebox{0.28cm}{\scriptsize{$Z$}}\overline{\bigcup\limits_{n\in\left.\mathbb{N}\right.^*}\left.
  \varphi\right.^n\left(T,\left(\mathbb{R}^d\right)^n\right)} \nonumber
\end{align}
with $\raisebox{0.18cm}{\scriptsize{$Z$}}\overline S$ denoting the closure of a set $S$ in the Zariski topology (see appendix). Since $\Sigma_n$ is  always positive
definite, we define
\[U := \underbrace{\mathrm{vech}(\mathbb{S}_d^{++})\times\ldots\times\mathrm{vech}(\mathbb{S}_d^{++})}_{p}\times
 \underbrace{\mathbb{R}^d\times\ldots\times\mathbb{R}^d}_{q}\]
and consider $W\cap U$ as the natural state space for $(Y_n)_{n\in\mathbb{N}^*}$. Finally,  $\mathcal{B}(W\cap U)$ denotes the Borel $\sigma$-algebra over 
$W\cap U$ inherited from the standard Borel $\sigma$-algebra on $\left(\mathbb{R}^{d(d+1)/2}\right)^p\times\left(\mathbb{R}^d\right)^{q}$, i.e.
$\mathcal{B}(W\cap U)$ is related to the usual Euclidean topology.
\begin{satz} 
 Let $(X_n)_{n\in\mathbb{N}^*}$ be a standard GARCH($p,q$) process.
 \begin{enumerate}
  \item
   If
    \begin{enumerate}
     \item[(H1)] 
      the distribution $\Gamma$ of  \(\epsilon_1\) is absolutely continuous with respect to the Lebesgue measure on \(\mathbb{R}^d\),
     \item[(H2)]
      the point zero is in the interior of \(E:=\mathrm{supp}({\Gamma})\) and 
     \item[(H3)] 
      the spectral radius of \(\sum_{i=1}^qA_i+\sum_{j=1}^pB_j\) is less than \(1\), 
    \end{enumerate}
   then the Markov chain \((Y_n)_{n\in\left.\mathbb{N}\right.^*}\) is positive Harris recurrent and geometrically ergodic on the state space 
   \((W\cap U,\mathcal{B}(W\cap U))\). 

   The strictly stationary solution \((X_n)_{n\in\mathbb{Z}}\) of the standard GARCH\((p,q)\) model associated with \((Y_n)_{n\in\mathbb{Z}}\) is unique and
   geometrically \(\beta\) - mixing. Furthermore, \(X_n\in L^2\) and \(\Sigma_n\in L^1\) for all \(n\in\mathbb{Z}\). The strictly stationary solution is, hence, also 
   weakly stationary.
  \item
   If there exists a weakly stationary solution for the standard GARCH\((p,q)\) model, then the spectral radius of the matrix
   \(\sum_{i=1}^qA_i+\sum_{j=1}^pB_j\) is less than \(1\).
 \end{enumerate}
 \label{Satz 3.3.9} \label{th:maintheo}
\end{satz}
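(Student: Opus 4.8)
The plan is to treat the two parts separately, beginning with part (ii), which is the short direction. Assume a weakly stationary solution \((X_n)\) exists. Since \(\epsilon_n\) is independent of \(\mathcal{F}_{n-1}\supseteq\sigma(\Sigma_n)\), we get \(\mathbb{E}[X_n]=0\) and, using \(\mathbb{E}[\epsilon_n\epsilon_n^t]=I_d\), \(\mathbb{E}[\mathrm{vech}(X_nX_n^t)\mid\mathcal{F}_{n-1}]=\mathrm{vech}(\Sigma_n)\); hence \(m:=\mathbb{E}[\mathrm{vech}(\Sigma_n)]=\mathbb{E}[\mathrm{vech}(X_nX_n^t)]\), which by weak stationarity does not depend on \(n\). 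Taking expectations in \eqref{3.8} gives \((I-M)m=\mathrm{vech}(C)\) with \(M:=\sum_{i=1}^qA_i+\sum_{j=1}^pB_j\). Iterating, \(m=\sum_{k=0}^{N}M^k\,\mathrm{vech}(C)+M^{N+1}m\) for every \(N\). Each \(A_i\) and \(B_j\), hence \(M\), maps the closed cone \(\mathrm{vech}(\mathbb{S}_d^+)\) into itself, so every \(M^k\,\mathrm{vech}(C)\) and \(m\) lie in that cone; thus the partial sums \(\sum_{k=0}^{N}M^k\,\mathrm{vech}(C)\) increase in the cone order and are bounded above by \(m\), so they converge and \(M^N\,\mathrm{vech}(C)\to 0\). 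As \(C>0\), \(\mathrm{vech}(C)\) is interior to the cone, so any cone element \(v\) satisfies \(\delta v\le\mathrm{vech}(C)\) in the cone order for some \(\delta>0\), whence \(0\le M^Nv\to 0\); since the cone spans \(\mathbb{R}^{d(d+1)/2}\) this forces \(M^N\to 0\), i.e.\ \(\rho(M)<1\).

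For part (i) the plan is to recognise \((Y_n)\) as a semi-polynomial Markov chain and invoke the general results of Section \ref{sec:semipol}. Indeed, the first block of \(Y_n\), namely \(\mathrm{vech}(\Sigma_n)\), is a polynomial function of \(Y_{n-1}\), and \(Y_n=\varphi(Y_{n-1},\Sigma_n^{1/2}\epsilon_n)\) with \(\varphi\) regular, so the chain fits that framework (cf.\ Remark \ref{Bemerkung 3.2.4}). Two preliminary facts are needed. First, under (H3) one has \(\rho(\tilde B)<1\) --- this follows from \(\rho(\sum_jB_j)\le\rho(M)<1\) (monotonicity of the spectral radius for cone-preserving maps) together with the companion structure of \(B\) and Perron--Frobenius theory, as detailed in Section \ref{Section 3.3.3} --- so \(I-\tilde B\) is invertible and the fixed point \(T\) of \eqref{eq:fixedpoint} exists and is unique. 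One checks \(\varphi(T,0)=T\), so \(T\in W\); the \(\Sigma\)-blocks of \(T\) all equal \(\bar\Sigma=C+\sum_{j,r}\bar B_{j,r}\bar\Sigma\bar B_{j,r}^t\ge C>0\), so \(T\in U\); hence \(T\in W\cap U\), and \(W\) is an irreducible variety, being the Zariski closure of the increasing union of the irreducible sets \(\varphi^n(T,(\mathbb{R}^d)^n)\). Second, along any trajectory \(\Sigma_n=C+(\text{positive semi-definite terms})\ge C>0\), so the chain never approaches the boundary of \(U\); consequently the closed set \(\{y:\text{all }\Sigma\text{-blocks of }y\text{ are }\ge C\}\cap W\) lies inside \(W\cap U\) and its closed bounded subsets are compact subsets of the state space --- this is what makes sublevel sets of the drift function below petite.

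The core of the proof then consists of two ingredients fed into the general semi-polynomial theorem. \emph{(a) Irreducibility, aperiodicity, petiteness of compact sets.} Here algebraic geometry is essential: the orbit \(\bigcup_n\varphi^n(T,(\mathbb{R}^d)^n)\) is Zariski dense in \(W\) by construction, and for suitable \(n\) the map \(x\mapsto\varphi^n(T,x)\) has Jacobian of full rank \(\dim W\) on a nonempty Zariski-open set (generic smoothness), so its push-forward of Lebesgue measure is absolutely continuous with respect to the natural smooth measure on \(W\); combining this with (H1) (density of \(\Gamma\)) and (H2) (zero interior to \(E\), which propagates reachability to an open neighbourhood of a smooth point of \(W\cap U\)) produces a reachable point carrying a minorisation. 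Hence \((Y_n)\) is \(\psi\)-irreducible with \(\psi\) charging the smooth locus, aperiodic, and compact subsets of \(W\cap U\) are petite. \emph{(b) A geometric Foster--Lyapunov drift condition.} By (H3), \(\rho(M)<1\); pick \(\lambda\in(\rho(M),1)\) and, by Perron--Frobenius for the cone \(\mathrm{vech}(\mathbb{S}_d^+)\), a functional \(\ell=\langle v,\cdot\rangle\) in the interior of the dual cone with \(\ell(Mw)\le\lambda\,\ell(w)\) for all cone elements \(w\). Using \(\mathbb{E}[\mathrm{vech}(X_nX_n^t)\mid\mathcal{F}_{n-1}]=\mathrm{vech}(\Sigma_n)\), one builds \(V:W\cap U\to[1,\infty)\) as a suitably weighted sum of \(\ell\) evaluated on the \(\Sigma\)-blocks and on the \(X_mX_m^t\)-blocks of \(y\) (the weights chosen via the companion structure and \(\lambda\)) so that \(\mathbb{E}[V(Y_n)\mid Y_{n-1}=y]\le\lambda'V(y)+L\) for some \(\lambda'<1\), \(L<\infty\) and all \(y\); since \(\Sigma_n\ge C\), the sublevel sets \(\{V\le c\}\) are compact, hence petite. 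The geometric-ergodicity theorem for \(\psi\)-irreducible aperiodic chains recalled in the appendix now yields that \((Y_n)\) is positive Harris recurrent and geometrically ergodic on \((W\cap U,\mathcal{B}(W\cap U))\) with a unique invariant probability \(\pi\) satisfying \(\pi(V)<\infty\).

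Finally the stated consequences follow. Driving the chain from \(\pi\) by a two-sided i.i.d.\ sequence \((\epsilon_n)_{n\in\mathbb{Z}}\) and using Kolmogorov's extension theorem gives a strictly stationary \((Y_n)_{n\in\mathbb{Z}}\), whose blocks define a strictly stationary solution \((X_n)_{n\in\mathbb{Z}}\) of \eqref{3.5}--\eqref{3.6}. For uniqueness, any strictly stationary solution has \(\Sigma_n>0\) (as \(C>0\)) and, by the algebraic identities defining \(W\), its \(Y\)-process is supported on \(W\cap U\) and stationary, hence has law \(\pi\) by uniqueness of the invariant probability. Geometric ergodicity of a positive Harris chain gives geometric \(\beta\)-mixing of the stationary version. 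Since \(\pi(V)<\infty\) and \(V(y)\) dominates a positive multiple of the trace of the \(\Sigma\)-block, \(\Sigma_n\in L^1\); and \(\mathbb{E}\|X_n\|^2=\mathbb{E}[\mathrm{tr}(\Sigma_n)]<\infty\) together with \(\mathbb{E}[X_n]=0\) and stationarity of the second moments yields weak stationarity. I expect the main obstacle to be ingredient (a): establishing genuine irreducibility and petiteness of compact sets even though \((Y_n)\) is confined to the possibly lower-dimensional variety \(W\). This is precisely what the theory of semi-polynomial Markov chains in Section \ref{sec:semipol} is designed to deliver, so the substance of this proof is to verify its hypotheses for the concrete map \(\varphi\).
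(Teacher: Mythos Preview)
Your overall architecture matches the paper's: verify the semi-polynomial framework, check (A2) via $\rho(\tilde B)<1$, get irreducibility/aperiodicity from Section~\ref{sec:semipol}, build a Foster--Lyapunov function from (H3), verify (A3), and conclude. Part (ii) is correct and is essentially a direct cone-monotonicity version of what the paper does via Proposition~\ref{Proposition 3.3.2}.

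There is, however, a genuine gap in part (i) at precisely the point the paper flags as ``the main remaining problem'': you assert that the sublevel sets $\{V\le c\}$ are compact because ``$\Sigma_n\ge C$ along trajectories''. This does not follow. The drift inequality must hold on all of $W\cap U$, and $W$ is a Zariski closure, so $W\cap U$ may (and in general does) contain points whose $\Sigma$-blocks are positive definite but \emph{not} $\ge C$. Since $V(y)$ depends continuously on the $\Sigma$-blocks and stays finite as a block degenerates, the sublevel set $K=\{V\le c\}\subseteq W\cap U$ is bounded but can accumulate at $\partial U$ and hence fail to be compact in $W\cap U$. Your observation that the forward orbit eventually sits inside $U_C:=\{\text{all }\Sigma\text{-blocks}\ge C\}$ is correct and is the key, but it does not by itself make $K$ compact or petite.

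The paper closes this gap as follows (see the proof of Theorem~\ref{th:maintheo}): split $K=K_1\cup K_2$ with $K_1=K\cap U_C^{\,c}$; use boundedness of $K$ and the recursion to show that from any $x\in K_1$, conditional on $\|\epsilon_i\|\le\eta$ for $i=1,\dots,p$, one lands after $p$ steps in a fixed compact $\tilde K\subseteq W\cap U_C$; deduce $P^p(x,\tilde K)\ge\zeta>0$ uniformly on $K_1$. Then the (weak) Feller property together with $\mathrm{int}(\mathrm{supp}\,\psi)\neq\emptyset$ (Proposition~\ref{Proposition 2.3.4}) makes $\tilde K$ petite, and a Chapman--Kolmogorov argument transfers petiteness to $\tilde K\cup K_1\supseteq K$. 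Without this step your drift criterion cannot be invoked, because Theorem~\ref{Satz 1.1.18} requires the set $C$ in (FL) to be small (or petite).

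Two smaller points: your verification of (A3) (``by the algebraic identities defining $W$'') is too quick---the paper's Theorem~\ref{Satz 3.3.7} actually iterates $\Sigma(n)=\mathscr C_1+AX(n-1)+B\Sigma(n-1)$, uses $\rho(B)<1$ to kill $B^k\Sigma(n-k)$ in probability, and identifies the limit with the generators of $S_T$; and your Lyapunov construction via a Perron--Frobenius functional is a valid alternative to the paper's explicit matrices $V_1,\dots,V_{p+q}$, but you still need the companion weights to handle the $p+q$ lagged blocks, which is where the work lies.
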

\begin{bemerkung}
 \begin{enumerate}[(i)]
  \item
   If the initial values $\Sigma_0,\ldots, \Sigma_{1-p}$ are in $W$ and positive semi-definite but not positive definite, the geometric ergodicity still holds, since one easily
   sees that one then has $Y_p\in W\cap U$.
  \item
   Obviously, our conditions for strict stationarity imply also weak (or second order) stationarity. In the univariate case it is well-known that for a driving noise $\epsilon$
   with finite variance one may well choose the GARCH parameters in such a way that a unique stationary solution with infinite second moments exists (see 
   \cite{Basraketal2002,Bougeroletal1992b,Lindner2009}, for example). Extending the above result to cover such cases seems not possible at the moment, although in
   principle the main problem is ``only'' to find an appropriate function for the Foster-Lyapunov drift criterion. 
  \item
   Theorem \ref{Satz 3.3.9} should cover most of the BEKK multivariate GARCH models used in applications, since one usually wants a finite second moment and uses
   absolutely continuous noises $\epsilon$ (e.g. multivariate standard normal or standard $t_\nu$-distributed noises with $\nu>2$). Hence, (H3) will typically be the only
   condition that needs checking. 

   Note, however, that it is possible to weaken the assumptions (H1) and (H2) to the existence of a non-trivial absolutely continuous part of the innovation distribution with
   zero in the interior of its support (cf. \cite[Section 2.4]{Doukhan1994}). More precisely, if 
   \begin{enumerate}
    \item[(K1)]
     the distribution $\Gamma$ of  $\epsilon_1$ can be decomposed as $\Gamma=\Gamma_0+\Gamma_1$ with $\Gamma_0$ being non-trivial and absolutely 
     continuous with respect to the Lebesgue measure on $\mathbb{R}^d$ (cf., for instance, \cite[p. 174]{Sato1999} where the Lebesgue decomposition of any
     $\sigma$-finite measure on $\scrb(\bbr^d)$ is explained),
    \item[(K2)]
     the point zero is in the interior of $E=\mathrm{supp}(\Gamma_0)$
   \end{enumerate}
   and (H3) hold, then Theorem \ref{Satz 3.3.9} remains valid.

   We give a brief summary of the changes that have to be done in the following sections if one starts with (K1) and (K2) instead of (H1) and (H2):
   in the statements of Theorem \ref{Satz 2.2.1}, Proposition \ref{Proposition 2.2.3} and Theorem \ref{Satz 2.2.4} one simply has to replace $\Gamma$ by 
   $\Gamma_0$. Further the assumptions (A1) and (A2) in Section \ref{Section 2.3.1} have to be adapted in the same way as (H1) and (H2) have been weakend. 
   A crucial point is to replace the right hand side in Proposition \ref{Proposition 2.3.3} by the subprobability measure $F^k(T, \otimes_{i=1}^k\Gamma_0)$. 
   In the same manner we take $v$ in the proof of Proposition \ref{Proposition 2.3.4} equal to the subprobability measure $F^l(T, \otimes_{i=1}^l\Gamma_0)$. 
   The rest of the paper is not affected.
  \item
   If we completely omitted the assumptions (K1) and (K2) on the innovations, the mixing result might be no longer true (cf. \cite{Andrews1984}). However, one could 
   try to extend the idea of \cite{Doukhanetal2008} where the existence of a $\tau$-weakly dependent strictly stationary solution for a chain with infinite memory has 
   been shown under a Lipschitz-type condition, but the expression of this condition in terms of the matrices appearing in the BEKK representation seems to be very
   delicate. In some cases, like for instance the upcoming Example \ref{Beispiel 3.3.11}, the contraction condition is easily verified (in this case with the Orlicz function
   $\Phi(x)=x^2$). Hence, in such cases \cite[Theorem 3.1]{Doukhanetal2008} yields the existence of a $\tau$-weakly dependent strictly stationary solution of the
   standard GARCH model even if the innovation sequence does not possess an absolutely continuous component. 

   Finding an appropriate contraction condition in general appears highly non-trivial but may allow to show $\tau$-dependence without assuming (K1) and (K2) and 
   without using algebraic geometry. This would imply Central Limit Theorems (CLTs) and the validity of bootstrap procedures (see, for instance, \cite{Bickeletal1999}) 
   for the strictly stationary solution. However, for simulation purposes as well as CLTs and Strong Laws of Large Numbers when not starting with the stationary 
   distribution, geometric ergodicity and the ``right'' irreducible state space is very important (see again the upcoming Example \ref{Beispiel 3.3.11}). 
   The latter seem not to be obtainable under $\tau$-weak dependence conditions. Moreover, note also that $\tau$-weak dependence is a weaker notion than 
   strong mixing.
  \item
   Strong mixing conditions are, as $\tau$-dependence conditions, a way to derive limit theorems. Anyway, the CLT under strong mixing, even if the mixing 
   coefficients decline exponentially fast (which is the case for all geometrically ergodic Markov chains), needs a stronger order moment condition than the second order
   ones obtained in Theorem \ref{Satz 3.3.9} (see \cite{Haeggstroem2005} and also \cite{Herrndorf1983, Jones2004}). In \cite{Doukhanetal1994} 
   (see also \cite[Corollary 3]{Jones2004}) it has been shown that for a positive Harris recurrent and geometrically ergodic Markov chain $(X_t)_{t\in\bbz}$ on a state
   space $S$ with stationary distribution $\pi$, the CLT holds for any real-valued function $f$ defined on $S$ which satisfies $\int_S\pi(dx)f^2(x)\log^+|f(x)|<\infty$.
   In our case these references together with the obtained results in particular imply the CLT for functions $f$ such that 
   $\int_{W\cap U}\pi(dx)f^2(x)\log^+|f(x)|\leq C\cdot\int_{W\cap U}\pi(dx)V(x)$ (where $V$ is the function specified in the upcoming proof of Theorem 
   \ref{Satz 3.3.8}). Hence, for any $\varepsilon>0$, the CLT can be applied to $(1/2-\varepsilon)$th and $(1-\varepsilon)$th absolute powers of 
   $\Sigma_n$ and $X_n$, respectively. Moreover, combining our conditions on geometric ergodicity with the fourth order moment conditions of \cite{Hafner2003b}
   immediately gives sufficient conditions for the validity of more classical CLTs.
 \end{enumerate}
\end{bemerkung}
One might expect that the set $W\cap U$ spans the space $\left(\mathbb{R}^{d(d+1)/2}\right)^p\times\left(\mathbb{R}^d\right)^{q}$ and that, hence, 
the state space of a stationary GARCH process is ``non-degenerate''. However, this needs not to be true:
\begin{beispiel}
 Consider the following bivariate GARCH\((1,1)\) model:
 \begin{equation}
  \mathrm{vech}(\Sigma_n) = \mathrm{vech}(C) + A\ \mathrm{vech}(X_{n-1}X_{n-1}^t) + B\ \mathrm{vech}(\Sigma_{n-1}) \label{3.18}
 \end{equation}
 where \(A\) and \(B\) are two \(3\times 3\) matrices such that the spectral radius of \(A+B\) is less than \(1\) and \(BA = 0\). Such a GARCH model can be 
 obtained from a BEKK model with $l_1=s_1=1$ and $\bar B_{11}\bar A_{11}=0$. 

 Starting from the initial point \(T=(\mathrm{vech}(\Sigma_0)^t,X_0^t)^t\) given by equation \eqref{eq:fixedpoint}, we note that
 \(\mathrm{vech}(\Sigma_0) = \mathrm{vech}(C) + B\ \mathrm{vech}(\Sigma_0)\) and $X_0=0$ and obtain by iterating (\ref{3.18})
 \[\mathrm{vech}(\Sigma_n) = \mathrm{vech}(\Sigma_0) + A\ \mathrm{vech}(X_{n-1}X_{n-1}^t).\]
 Let \(f\) be the regular map from \(\mathbb{R}^4\) into \(\mathbb{R}^5\) given by
 \[(x_1,x_2,x_3,x_4)\mapsto f(x_1,x_2,x_3,x_4) := T + \begin{pmatrix} A\ (x_1^2, x_1x_2, x_2^2)^t \\ x_3 \\ x_4 \end{pmatrix}.\]
 Then \(W\) is the Zariski closure of the semi-algebraic set \(f(\mathbb{R}^4)\) (see \cite[Theorem 2.3.4]{BENALGSET}) and \(W\) has to be strictly contained in
 \(\mathbb{R}^5\) since \(\dim f(\mathbb{R}^4)\leq 4 = \dim\mathbb{R}^4\).
 \label{Beispiel 3.3.10}
\end{beispiel}
Note that the problem of degeneracy in this example lies in the non-invertibility of at least one of the two matrices $A,\,B$. Indeed, it is easy to see that $W$ is of 
full dimension and no such degeneracy as above can occur if $A,\,B$ (or $\bar A_{11},\,\bar B_{11}$ in the BEKK formulation) are both invertible and (H1), (H2) hold. 

Moreover one has to be very careful when using GARCH models not to use them outside the state space $W\cap U$. Typically one would simulate a stationary GARCH
process by starting with an arbitrary value and letting the process run. The values are only recorded after a burn-in period. The geometric ergodicity ensures that after an
appropriately long burn-in period the obtained values can be basically regarded as coming from the stationary dynamics. To ensure that this approach works, our results
show that one needs to start in $W\cap U$. One choice of the starting values always possible is $T$ which is easily calculated from the parameters by solving a system of
linear equations. Let us give an example where starting values outside $W\cap U$ indeed lead to a problem. 
\begin{beispiel}
 Consider the set-up of Example \ref{Beispiel 3.3.10} with $l_1=s_1=1$ and 
 \[\bar A_{11}=\begin{pmatrix} a & 0\\ 0 & 0\end{pmatrix},\,\, \bar B_{11}=\begin{pmatrix} 0 & 0\\ 0 & b\end{pmatrix},\]
 with $|a|<1,\,|b|<1$ being non-zero real numbers. Obviously (H3) is satisfied.

 It is then easy to see that the component corresponding to the second variance is constant in $W$, say it equals $\sigma_{22}$. If we start with an initial value
 $\Sigma_0$ with a  second variance $(\Sigma_0)_{22}$, then one sees easily  that $(\Sigma_n)_{22}=C_{22}+b^2(\Sigma_{n-1})_{22}$ for all 
 $n\in\mathbb{N}^*$. Obviously, this equation has a unique fixed point which must be equal to $\sigma_{22}$ and the right hand side corresponds to an injective map.
 By induction this implies $(\Sigma_n)_{22}\not =\sigma_{22}$ for all $n\in\mathbb{N}^*$ if the starting value satisfies $(\Sigma_0)_{22}\not =\sigma_{22}$.

 Hence, for such a starting value $Y_n \not\in W$ for all $n\in\mathbb{N}$ and thus the distribution of $Y_n$ can never converge in the total variation sense to the
 stationary distribution $\pi$. This means that we can never have geometric ergodicity when allowing such starting values outside $W$ but in $U$.
 \label{Beispiel 3.3.11}
\end{beispiel}
%
%
\section{Stationarity and Geometric Ergodicity of Semi-polynomial Markov Chains}
\label{sec:semipol}
In this section we consider a general class of Markov chains and prove criteria for stationarity and geometric ergodicity. We will apply the results later on to the 
special case of multivariate GARCH processes, but the general results of this section seem also of interest of their own, since they should be applicable to different 
models as well.

We consider Markov chains in $\mathbb{R}^n$ of the form  \(X_{t+1}=F(X_t, e_t)\) where \((e_t)_{t\in\mathbb{N}}\) is an $m$-dimensional i.i.d.\ sequence
and \(F\) is an appropriate map as follows. 

Let \(V\subseteq\mathbb{R}^n\) be an algebraic variety (cf.\ Definition \ref{Algebraische Varietaet}) and  \(U\)  an open subset of \(\mathbb{R}^n\) and let 
\(F: U\times\mathbb{R}^m\to U\) be a \(C^1\) - map such that there exist a \(C^1\) - map \(f: U\times\mathbb{R}^m\to\mathbb{R}^m\) and a map 
\(\varphi: \mathbb{R}^n\times\mathbb{R}^m\to\mathbb{R}^n\) satisfying:
\begin{enumerate}
 \item[(F1)]
  \(F(z,y)=\varphi(z,f(z,y))\) for all \((z,y)\in U\times\mathbb{R}^m\),
 \item[(F2)]
  \(\varphi(V\cap U\times\mathbb{R}^m)\subseteq V\cap U\),
 \item[(F3)]
  the map \((z,y)\mapsto\varphi(z,y)\) is regular in \((z,y)\) (cf.\ Definition \ref{Regulaere Abbildung}) and 
 \item[(F4)]
  for all \(z\in U\), the map \(f_z( \cdot )=f(z, \cdot )\) is a \(C^1\) - diffeomorphism from \(\mathbb{R}^m\) onto \(\mathbb{R}^m\) and the map
  \(U\times\mathbb{R}^m\to\mathbb{R}^m,\) \((z,y)\mapsto f_z^{-1}(y)\) is continuous in \((z,y)\) where \(f_z^{-1}( \cdot )\) denotes the inverse map 
  of \(f_z( \cdot )\).  
\end{enumerate}

The case when \(F\) is a regular map, i.e. when $f_z$ is the identity, has been considered in \cite{MOKPROP} under the name ``polynomial Markov chains''. 
Similarly to that paper we use extensively algebraic geometry (see the appendix for references and the most relevant definitions)  and drift criteria to show 
the stationarity and ergodicity of these Markov chains, but the presence of the additional diffeomorphism $f_z$ makes all proofs considerably more involved. 
Moreover, we always need to ensure that we stay in $U$.
\subsection{Properties of the Image Measure}
In a first step we consider how $F$ acts for a fixed first argument on the noise distribution, which will lead to $\psi$-irreducibility conditions in the next section.

In general, the image of \(\mathbb{R}^m\) under \(F_z( \cdot ):=F(z, \cdot )\) is a semi-algebraic set in \(\mathbb{R}^n\) with dimension less than \(n\) 
(see \cite[Theorem 2.3.4]{BENALGSET}). Thus the Lebesgue measure of this image is often zero.

Therefore we need to work with Hausdorff measures (see, for example, \cite{DIEELE} for a detailed introduction). We suppose that the algebraic variety \(V\) is 
equipped with a regular measure \(\mu_V\) defined on $(V,\mathcal{B}(V))$ where $\mathcal{B}(V)$ denotes again the Borel $\sigma$-algebra over $V$ inherited 
from the usual Euclidean topology. Recall that $\mu_V$ is said to be regular if, for any $A\in\mathcal{B}(V)$ and any $\delta>0$, there exist an open set
$U\in\mathcal{B}(V)$ and a compact set $K\in\mathcal{B}(V)$ such that $K\subseteq A\subseteq U$ and $\mu_V(U\backslash K)<\delta$. In the following we 
assume that this measure $\mu_V$ is obtained by equipping the regular set \(\mathcal{R}(V)\) of \(V\) (cf.\ Definition \ref{Regulaerer Punkt}) with an appropriate
Hausdorff measure which is extended by zero to the singular set \(\mathcal{S}(V)=V\backslash\mathcal{R}(V)\). Moreover we henceforth suppose that
\(\Gamma\) is a measure absolutely continuous with respect to the Lebesgue measure on \(\mathbb{R}^m\) with density \(\gamma\). For \(z\in V\cap U\) we 
denote by \(\Gamma_z\) the image measure of $\Gamma$ under $F_{z}$ in \(V\cap U\). Furthermore we define $E:=\mathrm{supp}(\Gamma)$ which is essentially 
also the domain of positivity of the density \(\gamma\). For the notion of smooth points we refer to \cite[A 20]{MOKPROP} or \cite[p. 42]{MUMALGGEO} and note that
the definition makes sense for general \(C^1\) - maps.
\begin{satz}
 Suppose that \(z_0\in V\cap U\) and \(F_{z_0}( \cdot )\) has a smooth point in \(\mathbb{R}^m\). Then \(\Gamma_{z_0}\) is absolutely continuous with respect to 
 the measure \(\mu_V\) and  has support \(F_{z_0}(E)\).
 \label{Satz 2.2.1}
\end{satz}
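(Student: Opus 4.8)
The plan is to reduce the statement, via the structure (F1)--(F4), to the case of a regular map, and then to argue by generic smoothness together with the constant rank theorem and Fubini's theorem. By (F1) one may write $F_{z_0}(\cdot)=\varphi_{z_0}\big(f_{z_0}(\cdot)\big)$, where $\varphi_{z_0}:=\varphi(z_0,\cdot)$ is a regular map by (F3) and $f_{z_0}:=f(z_0,\cdot)$ is a $C^1$-diffeomorphism of $\mathbb{R}^m$ by (F4); moreover $F_{z_0}(\mathbb{R}^m)=\varphi_{z_0}(\mathbb{R}^m)\subseteq V\cap U$ by (F2). Since a $C^1$-diffeomorphism and its inverse preserve Lebesgue-null sets, the push-forward $\widetilde\Gamma:=(f_{z_0})_*\Gamma$ is again absolutely continuous with respect to Lebesgue measure on $\mathbb{R}^m$; since homeomorphisms preserve supports, $\mathrm{supp}(\widetilde\Gamma)=f_{z_0}(E)$; and $\Gamma_{z_0}=(\varphi_{z_0})_*\widetilde\Gamma$. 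Pre-composition with $f_{z_0}$ alters neither the value of the map, nor the rank and image of its differential, nor the Zariski closure of its image, so $y_0$ is a smooth point of $F_{z_0}$ if and only if $f_{z_0}(y_0)$ is a smooth point of $g:=\varphi_{z_0}$. Hence it suffices to prove the assertion for the regular map $g$ and the measure $\widetilde\Gamma$ (essentially the polynomial situation of \cite{MOKPROP}) and then to transport it back along $f_{z_0}$.

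First I would pin down the geometry implied by the smooth-point hypothesis. Let $W_0\subseteq V$ be the Zariski closure of $g(\mathbb{R}^m)$; as the image of the irreducible set $\mathbb{R}^m$ under a morphism it is irreducible, and $r:=\dim W_0$ equals the maximal (generic) rank of $dg$. The hypothesis furnishes a point $y_0$ at which $dg$ has constant rank $r$ near $y_0$, at which $g(y_0)\in\mathcal{R}(V)$, and at which $dg_{y_0}$ maps onto the tangent space $T_{g(y_0)}V$. By the constant rank theorem, $g$ carries a neighbourhood of $y_0$ onto an $r$-dimensional $C^1$-submanifold lying in $\mathcal{R}(V)$; surjectivity of $dg_{y_0}$ forces this submanifold to be a relatively open piece of $\mathcal{R}(V)$ around $g(y_0)$, whence $r=\dim_{g(y_0)}V$ and $W_0$ is an \emph{entire} irreducible component of $V$, of dimension $r$. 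In particular $V$ is $r$-dimensional along $\mathcal{R}(V)\cap W_0$, so that there $\mu_V$ agrees, up to a positive normalising constant, with $r$-dimensional Hausdorff measure $\mathcal{H}^r$, i.e.\ with the induced Riemannian volume. I expect this to be the main obstacle: it is where algebraic geometry genuinely enters, and it is precisely here that one must exclude the degenerate situation in which $g(\mathbb{R}^m)$ sits in a stratum of $\mathcal{R}(V)$ of dimension smaller than the ambient local dimension of $V$ --- in which case $\mu_V$ would assign measure zero to the image and absolute continuity would fail.

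For absolute continuity, let $N\subseteq\mathbb{R}^m$ be the set of points that are not smooth points of $g$. Then $N$ equals the union of the rank-drop locus $\{y:\mathrm{rank}\,dg_y<r\}$ and $g^{-1}(\mathcal{S}(V)\cap W_0)$; the former is a proper algebraic set because $y_0$ has full rank, and the latter is a proper algebraic set because $\mathcal{S}(V)\cap W_0$ is a proper subvariety of $W_0$ (it misses $g(y_0)$) and $W_0$ is the Zariski closure of $g(\mathbb{R}^m)$, so its $g$-preimage is a proper subvariety of $\mathbb{R}^m$. Hence $N$ is Lebesgue-null and $\widetilde\Gamma(N)=0$. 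Cover the open set $\mathbb{R}^m\setminus N$ by countably many charts $W_i$ on each of which $g$ has constant rank $r$; by the rank theorem, in suitable $C^1$-coordinates $g|_{W_i}$ factors as a coordinate projection $\pi_i$ onto an open subset of $\mathbb{R}^r$ followed by a $C^1$-embedding $\iota_i$ onto a submanifold $M_i\subseteq\mathcal{R}(V)\cap W_0$. If $\mu_V(A)=0$, then $\mathcal{H}^r(A\cap M_i)=0$ for every $i$, hence $\iota_i^{-1}(A\cap M_i)$ is Lebesgue-null in $\mathbb{R}^r$, hence by Fubini $\pi_i^{-1}\big(\iota_i^{-1}(A\cap M_i)\big)$ is Lebesgue-null in $\mathbb{R}^m$ (transporting through the chart diffeomorphisms throughout), hence $\widetilde\Gamma$-null. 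Summing over $i$ and adding $\widetilde\Gamma(N)=0$ yields $\Gamma_{z_0}(A)=\widetilde\Gamma\big(g^{-1}(A)\big)=0$, i.e.\ $\Gamma_{z_0}\ll\mu_V$.

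Finally, for the support, put $\widetilde E:=\mathrm{supp}(\widetilde\Gamma)=f_{z_0}(E)$, which is closed since $f_{z_0}$ is a homeomorphism. If $B$ is open and $B\cap g(\widetilde E)\neq\emptyset$, then $g^{-1}(B)$ is a non-empty open set meeting $\widetilde E$, so $\widetilde\Gamma\big(g^{-1}(B)\big)>0$ and therefore $\Gamma_{z_0}(B)>0$; as $\mathrm{supp}(\Gamma_{z_0})$ is closed, this gives $\overline{g(\widetilde E)}\subseteq\mathrm{supp}(\Gamma_{z_0})$, the Euclidean closure. Conversely, if $p\notin\overline{g(\widetilde E)}$, pick an open $B\ni p$ with $g^{-1}(B)\cap\widetilde E=\emptyset$; since $\widetilde\Gamma$ is carried by $\widetilde E$ one has $\widetilde\Gamma\big(g^{-1}(B)\big)=0$, so $\Gamma_{z_0}(B)=0$ and $p\notin\mathrm{supp}(\Gamma_{z_0})$. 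Hence $\mathrm{supp}(\Gamma_{z_0})=\overline{g(\widetilde E)}=\overline{F_{z_0}(E)}$; since a support is closed this is exactly the assertion ``support $F_{z_0}(E)$'' of the theorem, and the two sets coincide whenever $F_{z_0}(E)$ is already closed, e.g.\ for compact $E$.
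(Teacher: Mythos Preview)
Your proof is correct and follows the same reduction as the paper: factor $F_{z_0}=\varphi_{z_0}\circ f_{z_0}$, push $\Gamma$ forward through the diffeomorphism $f_{z_0}$ to obtain an absolutely continuous measure $\widetilde\Gamma$ on $\mathbb{R}^m$ with support $f_{z_0}(E)$, and transfer the smooth-point hypothesis from $F_{z_0}$ to the regular map $\varphi_{z_0}$ via the chain rule and invertibility of $Df_{z_0}$.

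The difference lies in what happens after the reduction. The paper simply invokes \cite[Theorem 3.1]{MOKPROP}, which handles exactly the situation of a dominating regular map applied to an absolutely continuous measure, and is done in one line. You instead reprove the content of that theorem from scratch: you identify the bad locus $N$ (rank-drop plus preimage of the singular set) as a proper algebraic subset and hence Lebesgue-null, then cover the good locus by constant-rank charts where $g$ factors as a projection followed by an embedding into $\mathcal{R}(V)$, and run a Fubini/coarea argument to show $\mu_V$-null sets pull back to Lebesgue-null sets. Your geometric discussion of $W_0$ is slightly over-elaborate here --- since $V$ is by hypothesis an irreducible variety in the paper's terminology, the smooth-point assumption forces $\dim W_0=\dim V$ and hence $W_0=V$ outright --- but the argument is sound. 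Your approach buys self-containment and transparency about \emph{why} the smooth-point hypothesis is needed (it prevents the image from landing in a lower-dimensional stratum where $\mu_V$ vanishes); the paper's approach buys brevity by delegating this machinery to Mokkadem.

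One final remark: you correctly observe that what the argument actually yields is $\mathrm{supp}(\Gamma_{z_0})=\overline{F_{z_0}(E)}$, the Euclidean closure, and that this agrees with $F_{z_0}(E)$ only when the latter is already closed. The paper's statement writes ``$F_{z_0}(E)$'' without the closure bar, which should be read as a mild abuse; your version is the more precise one.
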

\begin{proof}
 First we denote by \(\Gamma_{z_0}^\prime\) the image measure \(f_{z_0}(\Gamma)\). One  obtains immediately by the Density Transformation Theorem that
 \(\Gamma_{z_0}^\prime\) is absolutely continuous with respect to the Lebesgue measure on \(\mathbb{R}^m\) with density
 \begin{equation}
  \gamma_{z_0}^\prime(y):=\gamma(f_{z_0}^{-1}(y))\cdot\frac{1}{\left|\det Df_{z_0}(f_{z_0}^{-1}(y))\right|}\ ,\quad y\in\mathbb{R}^m.
 \label{2.5}
 \end{equation}
 The support of $\Gamma^\prime_{z_0}$ is given by \(f_{z_0}(E).\) 

 There exists \(x_0\in\mathbb{R}^m\) such that \(F_{z_0}(x_0)=\varphi_{z_0}(y_0)\) is a regular point of \(V\) and \(rank(DF_{z_0}(x_0))=\dim V\) where
 \(y_0=f_{z_0}(x_0)\). Since \(F_{z_0}(\cdot)=\varphi_{z_0}(f_{z_0}(\cdot))\), we have \(DF_{z_0}(x_0)=D\varphi_{z_0}(y_0)\cdot Df_{z_0}(x_0)\). 
 Since \(f_{z_0}(\cdot)\) is a \(C^1\) - diffeomorphism, the linear map \(Df_{z_0}(x_0)\) is invertible. Thus \(rank(D\varphi_{z_0}(y_0))=\dim V\). 
 Since \(\varphi_{z_0}(y_0)\in\mathcal{R}(V)\)  and \(y_0\in\mathcal{R}(\mathbb{R}^m)\) (\(\mathbb{R}^m\) is a smooth algebraic variety,
 i.e.\ \(\mathcal{R}(\mathbb{R}^m)=\mathbb{R}^m\)), the regular map \(\varphi_{z_0}(\cdot)\) is smooth  at \(y_0\) and hence dominating (in the 
 sense of \cite[A23]{MOKPROP}). Applying \cite[Theorem 3.1]{MOKPROP} gives the result.
\end{proof}
\begin{proposition}
 Let \(z_0\in U\). Then, for every \(\epsilon>0\), there exists \(\alpha>0\) such that
 \[\left|\Gamma_z^\prime(B)-\Gamma_{z_0}^\prime(B)\right|=\left|f_z(\Gamma)(B)-f_{z_0}(\Gamma)(B)\right|<\epsilon\] 
 for all \(B\in\mathcal{B}(\mathbb{R}^m)\) and every \(z\in U\) with \(\left\|z-z_0\right\|<\alpha\).
 \label{Proposition 2.2.3}
\end{proposition}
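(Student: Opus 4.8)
The plan is to establish the apparently stronger statement that the total variation distance $\rho(z):=\sup_{B\in\mathcal{B}(\mathbb{R}^m)}|\Gamma_z'(B)-\Gamma_{z_0}'(B)|$ tends to $0$ as $z\to z_0$ in $U$; since $|\Gamma_z'(B)-\Gamma_{z_0}'(B)|\le\rho(z)$ uniformly in $B$, any $\alpha$ for which $\rho(z)<\epsilon$ on $\{z\in U:\|z-z_0\|<\alpha\}$ then proves the proposition. Two elementary facts will be used. First, each $f_z$ is a homeomorphism of $\mathbb{R}^m$, so $B\mapsto f_z^{-1}(B)$ is a bijection of $\mathcal{B}(\mathbb{R}^m)$ onto itself and therefore $\sup_{B}|f_z(\mu)(B)-f_z(\nu)(B)|=\sup_{B}|\mu(B)-\nu(B)|$ for any two finite Borel measures $\mu,\nu$ on $\mathbb{R}^m$. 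Second, if $\mu,\nu$ have Lebesgue densities $g,h$, then $\sup_{B}|\mu(B)-\nu(B)|\le\int_{\mathbb{R}^m}|g-h|$.

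Fix $\epsilon>0$ and a compact neighbourhood $N\subseteq U$ of $z_0$. As the continuous compactly supported functions are dense in $L^1(\mathbb{R}^m)$ and $\gamma\ge 0$, I would choose a continuous $\tilde\gamma\ge 0$ with compact support $K$ and $\int|\gamma-\tilde\gamma|<\epsilon/3$, and let $\tilde\Gamma$ be the finite measure with density $\tilde\gamma$. By the triangle inequality,
\[
\rho(z)\le\sup_{B}|f_z(\Gamma)(B)-f_z(\tilde\Gamma)(B)|+\sup_{B}|f_z(\tilde\Gamma)(B)-f_{z_0}(\tilde\Gamma)(B)|+\sup_{B}|f_{z_0}(\tilde\Gamma)(B)-f_{z_0}(\Gamma)(B)|,
\]
and by the two facts above the first and third terms are both equal to $\sup_{B}|\Gamma(B)-\tilde\Gamma(B)|\le\int|\gamma-\tilde\gamma|<\epsilon/3$. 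Hence it remains to show that the middle term tends to $0$ as $z\to z_0$.

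By the same change-of-variables computation as in (\ref{2.5}), $f_z(\tilde\Gamma)$ has the Lebesgue density $\eta_z(y):=\tilde\gamma(f_z^{-1}(y))/|\det Df_z(f_z^{-1}(y))|$, so the middle term is bounded by $\int_{\mathbb{R}^m}|\eta_z-\eta_{z_0}|$ and it suffices to prove $\eta_z\to\eta_{z_0}$ in $L^1(\mathbb{R}^m)$ as $z\to z_0$. For every fixed $y$ one has $\eta_z(y)\to\eta_{z_0}(y)$: this uses the continuity of $(z,y)\mapsto f_z^{-1}(y)$ from (F4), the continuity of $\tilde\gamma$, the continuity of $(z,x)\mapsto Df_z(x)$ (since $f$ is $C^1$), and the non-vanishing of $\det Df_{z_0}(f_{z_0}^{-1}(y))$ (since $f_{z_0}$ is a diffeomorphism). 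To pass to the $L^1$-limit by dominated convergence I need a majorant of $\eta_z$ independent of $z\in N$, and this is the delicate step. The support of $\eta_z$ lies in $f_z(K)\subseteq f(N\times K)=:K''$, which is compact; moreover, whenever $\eta_z(y)\ne 0$ we have $f_z^{-1}(y)\in K$, hence $|\det Df_z(f_z^{-1}(y))|\ge c:=\min_{(z,x)\in N\times K}|\det Df_z(x)|$, where $c>0$ because $(z,x)\mapsto\det Df_z(x)$ is continuous on the compact set $N\times K$ and never vanishes ($Df_z(x)$ being invertible). Therefore $0\le\eta_z\le(\|\tilde\gamma\|_\infty/c)\,\mathbf{1}_{K''}\in L^1(\mathbb{R}^m)$ for all $z\in N$, and dominated convergence along an arbitrary sequence $z_k\to z_0$ in $N$ yields $\int|\eta_{z_k}-\eta_{z_0}|\to 0$. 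Choosing $\alpha>0$ so small that $\{z:\|z-z_0\|<\alpha\}\subseteq N$ and the middle term is below $\epsilon/3$ on this set completes the argument. The main obstacle, as indicated, is the construction of the single dominating function valid for all nearby $z$; everything else is the density argument together with routine continuity bookkeeping.
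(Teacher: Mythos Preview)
Your proof is correct and follows essentially the same approach as the paper: approximate $\gamma$ in $L^1$ by a continuous compactly supported $\tilde\gamma$, control the two outer terms of a three-term splitting by $\int|\gamma-\tilde\gamma|$, and handle the middle term by dominated convergence using that the relevant densities are uniformly bounded and supported in a fixed compact set (your $K''$, the paper's projection $C_1$ of $\psi(\overline{B(z_0,r)}\times K)$). The only cosmetic difference is that you phrase the outer terms via invariance of total variation under pushforward by a bijection, whereas the paper performs the substitution explicitly in the integrals.
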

\begin{proof}
 The image measure \(\Gamma_z^\prime=f_z(\Gamma)\) is absolutely continuous with respect to the Lebesgue measure on \(\mathbb{R}^m\) with density
 \(\gamma_z^\prime\) given by equation (\ref{2.5}). \\
 Let \(\epsilon>0\) and \(B\in\mathcal{B}(\mathbb{R}^m)\). The space of real-valued continuous functions on \(\mathbb{R}^m\) with compact support is
 dense in the \(L^1\) sense in the space of all Lebesgue-integrable functions on \(\mathbb{R}^m\). Thus, there exists a continuous function
 \(\tilde{\gamma}: \mathbb{R}^m\to\mathbb{R}\) with compact support \(K\) such that
 \begin{equation}
  \int_{\mathbb{R}^m}\left|\gamma(x)-\tilde{\gamma}(x)\right|\ dx < \frac{\epsilon}{3}.
 \label{2.6}
 \end{equation}
 Hence,
 \begin{align}
  \left|\Gamma_z^\prime(B)-\Gamma_{z_0}^\prime(B)\right| 
   &=\bigg|\int_B\gamma_z^\prime(y)\ dy - \int_B\gamma_{z_0}^\prime(y)\ dy\bigg| \nonumber \\
   & \leq\int_{\mathbb{R}^m}\left|\gamma(f_z^{-1}(y)) - \tilde{\gamma}(f_z^{-1}(y))\right|\cdot\left|\det Df_z^{-1}(y)\right|\ dy \nonumber \\
   &+\int_{\mathbb{R}^m}\Big|\tilde{\gamma}(f_z^{-1}(y))\left|\det Df_z^{-1}(y)\right| 
    - \tilde{\gamma}(f_{z_0}^{-1}(y))\left|\det Df_{z_0}^{-1}(y)\right|\Big|\ dy \nonumber \\
   &+\int_{\mathbb{R}^m}\left|\tilde{\gamma}(f_{z_0}^{-1}(y)) - \gamma(f_{z_0}^{-1}(y))\right|\cdot\left|\det Df_{z_0}^{-1}(y)\right|\ dy =:I_1+I_2+I_3.
    \nonumber
 \end{align}
 With (\ref{2.6}) we obtain immediately by substitution \(I_1<\epsilon/3\) and \(I_3<\epsilon/3\).

 \(\tilde{\gamma}\) is bounded on \(\mathbb{R}^m\) by \(\sup\tilde{\gamma}\) and  for all \(r>0\) such that
 \(\overline{B(z_0,r)}:=\left\{z\in\mathbb{R}^n: \left\|z-z_0\right\|\leq r\right\}\subseteq U\), the set
 \(C:=\left\{(z,y)\in U\times\mathbb{R}^m: \left\|z-z_0\right\|\leq r\text{  and  }f_z^{-1}(y)\in K\right\}\)
 is a compact set in \(U\times\mathbb{R}^m\), since the map \(\psi: U\times\mathbb{R}^m\to U\times\mathbb{R}^m,\ \psi(z,y):=(z,f_z(y))\) 
 is continuous and \(C=\psi(\overline{B(z_0,r)}\times K)\). Thus, there is a real number \(b>0\) such that, for all \((z,y)\in C\), 
 \(\big|\det Df_z^{-1}(y)\big|<b\). The map \(\tilde{\gamma}(f_z^{-1}(y))\cdot\big|\det Df_z^{-1}(y)\big|\) is hence bounded on \(C\) 
 by \(b\cdot\sup\tilde{\gamma}\). \\
 Let \(C_1\) be the projection of \(C\) on \(\mathbb{R}^m\) and suppose without loss of generality \(\left\|z-z_0\right\|\leq r\). Then, for all \(y\notin C_1\), we 
 have \(\tilde{\gamma}(f_z^{-1}(y))=\tilde{\gamma}(f_{z_0}^{-1}(y))=0\) which implies
 \[I_2=\int_{C_1}\Big|\tilde{\gamma}(f_z^{-1}(y))\left|\det Df_z^{-1}(y)\right| - \tilde{\gamma}(f_{z_0}^{-1}(y))\left|\det Df_{z_0}^{-1}(y)\right|\Big|\ dy.\]
 This integrand is dominated by \(2b\sup\tilde{\gamma}\) and converges pointwise to zero if \(z\) converges to \(z_0\) (cf. (F4)). Since \(b\) and 
 \(\sup\tilde{\gamma}\) are finite constants and \(C_1\) is compact the dominant \(2b\sup\tilde{\gamma}\) is integrable over \(C_1\). Hence, we 
 can apply the Dominated Convergence Theorem and get \(I_2\to 0\) as \(z\to z_0\), i.e.\ there exists \(0<\alpha<r\) such that \(I_2<\epsilon/3\) for all \(z\in U\) 
 with \(\left\|z-z_0\right\|<\alpha\).
\end{proof}
\begin{satz}
 Suppose that \(z_0\in V\cap U\) and \(F_{z_0}(\cdot)\) has a smooth point in \(\mathbb{R}^m\). Then 
 \begin{equation}
  \liminf\limits_{\substack{z\to z_0\\z\in V\cap U}}\Gamma_z(A)\geq\Gamma_{z_0}(A)
 \label{2.7}
 \end{equation}
 for every \(A\in\mathcal{B}(V\cap U)\) (the Borel $\sigma$-algebra inherited from the usual Euclidean topology).
 \label{Satz 2.2.4}
\end{satz}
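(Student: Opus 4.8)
The plan is to prove the stronger statement that $\Gamma_z\to\Gamma_{z_0}$ in total variation as $z\to z_0$ inside $V\cap U$. This is more than \eqref{2.7} asks for, but not much more: all the $\Gamma_z$ are finite measures with the common total mass $\Gamma(\mathbb{R}^m)$ (because $F_z(\mathbb{R}^m)\subseteq V\cap U$ by (F2)), so applying \eqref{2.7} to $A$ and to its complement $(V\cap U)\setminus A$ shows that \eqref{2.7} is in fact equivalent to setwise convergence $\Gamma_z(A)\to\Gamma_{z_0}(A)$ for every Borel $A$. First I would use (F1) to write $F_z=\varphi_z\circ f_z$, hence $\Gamma_z=\varphi_z(\Gamma_z^\prime)$ with $\Gamma_z^\prime=f_z(\Gamma)$. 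Since a push-forward is a contraction for the total-variation norm, $\|\Gamma_z-\varphi_z(\Gamma_{z_0}^\prime)\|_{TV}=\|\varphi_z(\Gamma_z^\prime)-\varphi_z(\Gamma_{z_0}^\prime)\|_{TV}\le\|\Gamma_z^\prime-\Gamma_{z_0}^\prime\|_{TV}$, and the last quantity tends to $0$ by Proposition \ref{Proposition 2.2.3}. Thus it suffices to show $\varphi_z(\nu)\to\varphi_{z_0}(\nu)$ in total variation, where $\nu:=\Gamma_{z_0}^\prime$ is the fixed measure on $\mathbb{R}^m$, absolutely continuous with respect to Lebesgue measure, whose density $\gamma_{z_0}^\prime$ is given by \eqref{2.5}.

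For this I would first note that a smooth point of $F_{z_0}(\cdot)$ yields, via the diffeomorphism $f_{z_0}$, a smooth point $y_0$ of $\varphi_{z_0}(\cdot)$, and that this smooth point persists for $z$ near $z_0$: $\mathcal{R}(V)$ is open in $V$, so $\varphi_z(y_0)\in\mathcal{R}(V)$ for $z$ close to $z_0$, while $\mathrm{rank}\,D\varphi_z(y_0)$ is at least $\dim V$ by lower semicontinuity of the rank and at most $\dim V$ because near a regular point $V$ is a $(\dim V)$-manifold containing the image. Hence Theorem \ref{Satz 2.2.1}, applied with $\Gamma$ replaced by $\nu$, shows that $\varphi_z(\nu)$ is absolutely continuous with respect to $\mu_V$ for all $z$ in a neighbourhood $\mathcal{N}$ of $z_0$ in $V\cap U$; write $h_z$ for its $\mu_V$-density and observe $\int_{V\cap U}h_z\,d\mu_V=\nu(\mathbb{R}^m)$ for all $z\in\mathcal{N}$. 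Since the total masses coincide, Scheffé's lemma reduces the required total-variation convergence to the pointwise statement $h_z\to h_{z_0}$ $\mu_V$-almost everywhere as $z\to z_0$ (working along sequences $z_k\to z_0$, which is harmless since $V\cap U$ is metrizable).

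Establishing this almost everywhere convergence of the densities is, I expect, the crux. I would use the representation of $h_z$ underlying \cite[Theorem 3.1]{MOKPROP} (the same tool used in the proof of Theorem \ref{Satz 2.2.1}): for $\mu_V$-almost every $x\in V\cap U$ the value $h_z(x)$ is an integral of $\gamma_{z_0}^\prime$, weighted by the reciprocal of an appropriate Jacobian, over the fibre $\varphi_z^{-1}(x)$. The null set one has to exclude consists of the singular locus $\mathcal{S}(V)$ (which has $\mathcal{H}^{\dim V}$-measure zero) together with the set of critical values of $\varphi_{z_0}$; the latter is $\mu_V$-null because the critical set of $\varphi_{z_0}$ is a \emph{proper} algebraic subset of $\mathbb{R}^m$ — this is exactly where the smooth-point hypothesis enters, ruling out the degeneracies (such as a constant or non-dominating $\varphi_{z_0}$) for which the assertion would fail. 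For $x$ outside this null set the fibre $\varphi_z^{-1}(x)$ and the Jacobian vary continuously with $z$, and the tail of the fibre integral is controlled uniformly in $z$ after approximating $\gamma_{z_0}^\prime$ in $L^1$ by a continuous compactly supported function, exactly as in the proof of Proposition \ref{Proposition 2.2.3}; this yields $h_z(x)\to h_{z_0}(x)$. Hence $\|\varphi_z(\nu)-\varphi_{z_0}(\nu)\|_{TV}=\frac{1}{2}\|h_z-h_{z_0}\|_{L^1(\mu_V)}\to0$, so $\Gamma_z\to\Gamma_{z_0}$ in total variation and \eqref{2.7} follows (with equality).

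For comparison, the inequality \eqref{2.7} for \emph{open} $A\subseteq V\cap U$ is immediate and needs none of the above: joint continuity of $\varphi$ forces $\varphi_z(y)\in A$ for $z$ near $z_0$ whenever $\varphi_{z_0}(y)\in A$, so $\mathbf{1}_A(\varphi_{z_0}(y))\le\liminf_{z\to z_0}\mathbf{1}_A(\varphi_z(y))$ pointwise, and Fatou's lemma applied to $\Gamma_z^\prime(\varphi_z^{-1}(A))$ together with the total-variation bound of the first paragraph gives $\liminf_{z\to z_0}\Gamma_z(A)\ge\Gamma_{z_0}(A)$. The whole difficulty lies in passing from open to arbitrary Borel $A$: by the complementation remark above this cannot be done by inner or outer regularity of $\Gamma_{z_0}$ alone, and genuinely requires the $L^1$-convergence of the $\mu_V$-densities.
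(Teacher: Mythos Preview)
Your decomposition $\Gamma_z=\varphi_z(\Gamma_z')$ and the reduction, via Proposition~\ref{Proposition 2.2.3} and the push-forward contraction, to the behaviour of $z\mapsto\varphi_z(\nu)$ with $\nu=\Gamma_{z_0}'$ fixed, is exactly what the paper does. The difference lies in how the $\varphi_z$-part is handled.

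The paper does not attempt total-variation convergence. It simply invokes \cite[Theorem~3.2]{MOKPROP}: since $\varphi_{z_0}$ is dominating (this follows from the smooth-point hypothesis, as in the proof of Theorem~\ref{Satz 2.2.1}) and $\nu$ is Lebesgue-absolutely continuous, Mokkadem's theorem furnishes a neighbourhood $V_0$ of $z_0$ in $V\cap U$ on which $\varphi_z(\nu)(A)\ge\varphi_{z_0}(\nu)(A)-\epsilon/2$ for \emph{every} Borel $A$. Rewriting this as $f_{z_0}(\Gamma)(\varphi_z^{-1}(A))\ge f_{z_0}(\Gamma)(\varphi_{z_0}^{-1}(A))-\epsilon/2$ and combining it with Proposition~\ref{Proposition 2.2.3} applied at $B=\varphi_z^{-1}(A)$ gives $\Gamma_z(A)\ge\Gamma_{z_0}(A)-\epsilon$ in two lines. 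So the ``passage from open to arbitrary Borel $A$'' that you flag as the crux is absorbed entirely into the cited result; your closing claim that it ``genuinely requires the $L^1$-convergence of the $\mu_V$-densities'' is not correct.

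Your route via Scheff\'e is more ambitious, and the step you yourself identify as the crux---the pointwise $\mu_V$-a.e.\ convergence $h_z(x)\to h_{z_0}(x)$---is not established. Two concrete issues: first, for the coarea representation of $h_z(x)$ to hold you must exclude the critical values of $\varphi_z$, not of $\varphi_{z_0}$; this is a $z$-dependent null set, and while a countable-union argument along sequences repairs it, you do not say so. Second, and more seriously, the assertion that ``the fibre $\varphi_z^{-1}(x)$ and the Jacobian vary continuously with $z$'' is doing all the work and is not justified: the fibres are non-compact algebraic subsets of $\mathbb{R}^m$, and continuity of a Hausdorff-measure integral over such a moving domain is far from automatic. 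Making this rigorous would amount to reproving \cite[Theorem~3.2]{MOKPROP} (indeed a strengthening of it), which the paper simply cites. Note also that your observation that \eqref{2.7} is equivalent to setwise convergence is correct, but setwise convergence is strictly weaker than TV convergence (take densities $1+\sin(2\pi n x)$ on $[0,1]$), so you are aiming for genuinely more than the theorem asserts or the paper proves.
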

\begin{proof}
 Let \(A\in\mathcal{B}(V\cap U)\) and \(\epsilon>0\). Since \(\varphi_{z_0}(\cdot)\) is dominating and \(\Gamma_{z_0}^\prime=f_{z_0}(\Gamma)\) is 
 absolutely continuous with respect to the Lebesgue measure on \(\mathbb{R}^m\) (cf.\ proof of Theorem \ref{Satz 2.2.1}), \cite[Theorem 3.2]{MOKPROP} yields a
 neighbourhood \(V_0\) of \(z_0\) in \(V\cap U\) such that 
 \(\varphi_z(\Gamma_{z_0}^\prime)(A)\geq\varphi_{z_0}(\Gamma_{z_0}^\prime)(A)-\frac{\epsilon}{2}\) for all \(z\in V_0\)
 which is equivalent to 
 \begin{equation}
  f_{z_0}(\Gamma)(\varphi_z^{-1}(A))\geq f_{z_0}(\Gamma)(\varphi_{z_0}^{-1}(A))-\frac{\epsilon}{2}\,\forall\, z\in V_0.
 \label{2.8}
 \end{equation}
 Due to Proposition \ref{Proposition 2.2.3}, there exists \(\alpha>0\) such that \(f_z(\Gamma)(B)\geq f_{z_0}(\Gamma)(B)-\frac{\epsilon}{2}\) for all 
 \(B\in\mathcal{B}(\mathbb{R}^m)\) and every \(z\in U\) with \(\left\|z-z_0\right\|<\alpha\). We choose \(B=\varphi_z^{-1}(A)\) and deduce for every 
 \(z\in U,\ \left\|z-z_0\right\|<\alpha,\)
 \begin{equation}
  f_z(\Gamma)(\varphi_z^{-1}(A))\geq f_{z_0}(\Gamma)(\varphi_z^{-1}(A))-\frac{\epsilon}{2}.
 \label{2.9}
 \end{equation}
 With (\ref{2.8}) and (\ref{2.9}) we obtain for all \(z\in V_0\) with  \(\left\|z-z_0\right\|<\alpha\) that 
 \(f_z(\Gamma)(\varphi_z^{-1}(A))\geq f_{z_0}(\Gamma)(\varphi_{z_0}^{-1}(A))-\epsilon.\) Since \(\varphi_z(\Gamma_z^\prime)=F_z(\Gamma)=\Gamma_z\), 
 this is equivalent to 
 \[\Gamma_z(A)\geq\Gamma_{z_0}(A)-\epsilon\qquad\forall z\in V_0\cap\left\{z\in\mathbb{R}^n: \left\|z-z_0\right\|<\alpha\right\}.\]
 This shows (\ref{2.7}) since \(\epsilon>0\) can be chosen arbitrarily small.
\end{proof}
\subsection{Stationarity and Ergodicity}
Culminating in Theorem \ref{Satz 2.3.5} we now gradually show Harris recurrence, geometric ergodicity and \(\beta\)-mixing for semi-polynomial Markov chains.
\subsubsection{Assumptions}
\label{Section 2.3.1}
Concerning the sequence \((e_t)_{t\in\mathbb{N}}\) we make the following additional assumptions for our semi-poly\-nomi\-al Markov chain:
\begin{enumerate}
 \item[(A1)]
  Every \(e_t\) has distribution \(\Gamma\) which is absolutely continuous with respect to the Lebesgue measure on \(\mathbb{R}^m\) with density \(\gamma\).
  Let \(E\) denote the support  of \(\Gamma\).
\end{enumerate}
We define for all \(k\in\mathbb{N}^*,\,k>1\) the functions \(F^k(z,y_1,\ldots,y_k):=F(F^{k-1}(z,y_1,\ldots,y_{k-1}),y_k)\) where 
\(z\in U,\ (y_1,\ldots,y_k)\in(\mathbb{R}^m)^k\) and set $F^1=F$.

With this notation we introduce for \(z\in V\cap U\) the orbit
\[S_z:=\bigcup\limits_{k\in\left.\mathbb{N}\right.^*}\left\{F^k(z,y_1,\ldots,y_k):\,y_1,\ldots,y_k\in E\right\}.\]
To prove the desired properties for semi-polynomial Markov chains we assume:
\begin{enumerate}
 \item[(A2)]
  There is a point \(a\in\mathrm{int}(E)\) and a point \(T\in V\cap U\) such that, for all \(z\in V\cap U\), the sequence \((X_t^z)_{t\in\mathbb{N}}\) defined by
  \(X_0^z=z\) and \(X_t^z=F(X_{t-1}^z,a)\) for \(t\geq 1\) converges to the point \(T\).
\end{enumerate}
T is called \emph{attracting point} of the chain \((X_t)_{t\in\mathbb{N}}\).

We set \(W:=\raisebox{0.15cm}{\scriptsize{$Z$}}\overline{S_T}\) the Zariski closure of the orbit \(S_T\). Note that obviously $T\in W$ and $W\subseteq V$.
To show uniqueness of the strictly stationary solution we need the assumption:
\begin{enumerate}
 \item[(A3)]
  Any strictly stationary solution of the Markov chain $X_{t+1}=F(X_t, e_t)$ takes its values in the algebraic variety \(W\cap U\).
\end{enumerate}
\begin{bemerkung}
 \begin{enumerate}
  \item
   If (A2) is satisfied, then \(T\) is a fixed point of \(F(\cdot, a)\), since \(F\) is continuous.
  \item
   It is obvious that \(W\) is an algebraic set since it is the Zariski closure of \(S_T\). In fact, it is even irreducible (cf. the upcoming Section \ref{Section 2.3.2}).
 \end{enumerate}
 \label{Bemerkung 2.3.1}
\end{bemerkung}

Strictly speaking \(W\cap U\) is not necessarily an algebraic variety, but, as it is the intersection of an algebraic variety in $\mathbb{R}^n$ and the set $U$ where
our Markovian dynamics are defined, we refer to it as an algebraic variety.
\subsubsection{Algebraic Variety of States}
\label{Section 2.3.2}
In this subsection we suppose that the assumptions (A1) and (A2) hold. We will show that \(W\), defined as above, is indeed an algebraic variety
which we will call the Markov chain's \emph{algebraic variety of states}.

Let \((D_k)_{k\in\left.\mathbb{N}\right.^*}\) be the sequence of subsets of \(U\) defined by \(D_k:=F^k(T,E^k).\) Since \(F(T,a)=T\) (cf.\ Remark 
\ref{Bemerkung 2.3.1} (i)) we obtain \(D_k=F^k(F(T,a),E^k)=F^{k+1}(T,{\left\{a\right\}\times E^k})\subseteq D_{k+1},\) i.e.\ the sequence 
\((D_k)_{k\in\left.\mathbb{N}\right.^*}\) is an ascending sequence of subsets of \(\mathbb{R}^n\).

We set \(W_k:=\raisebox{0.15cm}{\scriptsize{$Z$}}\overline{F^k(T,(\mathbb{R}^m)^k)}\). Then we have 
\(W_k=\raisebox{0.15cm}{\scriptsize{$Z$}}\overline{\varphi^k(T,(\mathbb{R}^m)^k)}\) (defining \(\varphi^k\) analogously to \(F^k\)) since \(f_T(\cdot)\) is a 
\(C^1\) - diffeomorphism.
\begin{lemma}
 For all \(k\in\mathbb{N}^*\) we have \(W_k=\raisebox{0.14cm}{\scriptsize{$Z$}}\overline{D_k}\).
 \label{Lemma 2.3.7}
\end{lemma}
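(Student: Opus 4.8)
The plan is to establish the equality of Zariski closures $W_k = \raisebox{0.14cm}{\scriptsize{$Z$}}\overline{D_k}$ by showing the two sets $F^k(T,(\mathbb{R}^m)^k)$ and $D_k = F^k(T,E^k)$ have the same Zariski closure, using the general principle that a (semi-algebraic) subset and a suitably ``thick'' subset of it share a Zariski closure. The inclusion $\raisebox{0.14cm}{\scriptsize{$Z$}}\overline{D_k}\subseteq W_k$ is immediate since $D_k = F^k(T,E^k)\subseteq F^k(T,(\mathbb{R}^m)^k)$ and Zariski closure is monotone. The real content is the reverse inclusion, for which I would argue that $D_k$ is Zariski dense in $W_k$.

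First I would recall, via the remark preceding the lemma, that $W_k = \raisebox{0.14cm}{\scriptsize{$Z$}}\overline{\varphi^k(T,(\mathbb{R}^m)^k)}$, so by (F3) the relevant map is the \emph{regular} (polynomial) map $\varphi^k_T(\cdot) = \varphi^k(T,\cdot):(\mathbb{R}^m)^k\to\mathbb{R}^n$. For a regular map on an irreducible variety, the image of any set with nonempty Euclidean interior (in the source) is Zariski dense in the image of the whole source: indeed, if a polynomial $g$ vanishes on $\varphi^k_T(O)$ for some open $O\subseteq(\mathbb{R}^m)^k$, then $g\circ\varphi^k_T$ is a polynomial on $(\mathbb{R}^m)^k$ vanishing on the open set $O$, hence identically zero, hence $g$ vanishes on all of $\varphi^k_T((\mathbb{R}^m)^k)$ and thus on $W_k$. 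Now under (A2), $a\in\mathrm{int}(E)$, so $E^k$ has nonempty interior, containing a product of small balls around $(a,\dots,a)$. Applying the density principle with $O = \mathrm{int}(E^k)$ gives $\raisebox{0.14cm}{\scriptsize{$Z$}}\overline{\varphi^k_T(\mathrm{int}(E^k))} = W_k$, and since $\mathrm{int}(E^k)\subseteq E^k$ we get $W_k\subseteq\raisebox{0.14cm}{\scriptsize{$Z$}}\overline{\varphi^k_T(E^k)} = \raisebox{0.14cm}{\scriptsize{$Z$}}\overline{F^k(T,E^k)} = \raisebox{0.14cm}{\scriptsize{$Z$}}\overline{D_k}$, where the equality $\varphi^k_T(E^k)$ and $F^k_T(E^k)$ having the same closure again uses that each $f_z$ is a $C^1$-diffeomorphism (composing with a diffeomorphism of the source does not change the image, up to the reparametrisation built into $f$; more carefully, one iterates the identity $F(z,y)=\varphi(z,f(z,y))$ to see $F^k(T,E^k) = \varphi^k(T, g(E^k))$ for a suitable diffeomorphism $g$ of $(\mathbb{R}^m)^k$ onto itself, and a diffeomorphism maps a set with nonempty interior to a set with nonempty interior).

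I expect the main obstacle to be the bookkeeping needed to pass cleanly between $F^k$ and $\varphi^k$ at the level of the $k$-fold iterate: a single step is $F(z,y)=\varphi(z,f(z,y))$ with $f$ depending on the current state $z$, so unfolding $F^k(T,y_1,\dots,y_k)$ produces a map of the form $\varphi^k(T,h_1(y_1),h_2(y_1,y_2),\dots)$ where the reparametrisation in the $j$-th slot depends on the earlier $y$'s through the trajectory. One must check this composite reparametrisation is still a $C^1$-diffeomorphism of $(\mathbb{R}^m)^k$ (equivalently, that it has everywhere-invertible Jacobian, which follows from (F4) since the Jacobian is block-triangular with the invertible blocks $Df_{z}(\cdot)$ on the diagonal), so that it carries sets with nonempty interior to sets with nonempty interior. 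Alternatively, and perhaps more cleanly, one avoids this by noting $D_k = F^k(T,E^k) \supseteq F^{k}(T,\{a\}^{k-j}\times E^{j}\times\cdots)$-type inclusions are not needed; it suffices to observe directly that $F^k_T$ is itself $C^1$ with $F^k_T = \varphi^k_T\circ(\text{diffeo})$, so $F^k_T(\mathrm{int}(E^k))$ has nonempty interior relative to the image manifold near a smooth point — but invoking the regular-map density argument on $\varphi^k$ as above is the most economical route and sidesteps any smoothness-of-$F^k$ subtleties. Finally I would note the irreducibility of $W_k$ (Zariski closure of the continuous image of the irreducible space $(\mathbb{R}^m)^k$) is used implicitly to make ``a polynomial vanishing on a dense subset vanishes everywhere'' unambiguous, though for the density argument itself only the polynomial-identity-theorem on $\mathbb{R}^m$ is truly needed.
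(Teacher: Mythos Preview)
Your proposal is correct and follows essentially the same route as the paper: both arguments introduce the composite reparametrisation $g=f_T^{(k)}$ (with block-lower-triangular Jacobian, hence a diffeomorphism), use that $g(E^k)$ inherits nonempty interior from $E^k$ via (A2), and then exploit Zariski density of nonempty open subsets of $(\mathbb{R}^m)^k$ together with Zariski continuity of the regular map $\varphi^k_T$ to conclude $\raisebox{0.14cm}{\scriptsize{$Z$}}\overline{\varphi^k_T(g(E^k))}=\raisebox{0.14cm}{\scriptsize{$Z$}}\overline{\varphi^k_T((\mathbb{R}^m)^k)}=W_k$. The only cosmetic difference is that the paper cites \cite[Corollary 3.4.5]{BENALGSET} for the density fact while you prove it directly via the polynomial identity theorem; note also that your intermediate line ``$\raisebox{0.14cm}{\scriptsize{$Z$}}\overline{\varphi^k_T(E^k)}=\raisebox{0.14cm}{\scriptsize{$Z$}}\overline{F^k(T,E^k)}$'' is not quite right as stated (these sets differ by the reparametrisation $g$), but your ``more carefully'' paragraph fixes this exactly as the paper does.
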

\begin{proof}
 To this end consider the map \(f_T^{(k)}: (\mathbb{R}^m)^k\to(\mathbb{R}^m)^k,\) \((y_1,\ldots,y_k)\mapsto(x_1,\ldots,x_k)\) where 
 \(x_1=f_T(y_1),\ x_2=f_{F(T,y_1)}(y_2),\ldots,\ x_k=f_{F^{k-1}(T,y_1,\ldots,y_{k-1})}(y_k)\). \\
 Due to the properties of \(f\) and \(F\) (in particular (F4)), it is clear that \(f_T^{(k)}\) is bijective, continuous and its inverse is continuous as well, 
 i.e.\ \(f_T^{(k)}\) is a homeomorphism. \\
 Assumption (A2) implies that \(E^k\) contains an open ball of \((\mathbb{R}^m)^k\). Thus, since \(f_T^{(k)}\) is homeomorphic, \(f_T^{(k)}(E^k)\) contains 
 an open ball of \((\mathbb{R}^m)^k\).

 From \cite[Corollary  3.4.5]{BENALGSET} we obtain \(\raisebox{0.26cm}{\scriptsize{$Z$}}\overline{f_T^{(k)}(E^k)}=(\mathbb{R}^m)^k\). This shows 
 \[W_k=\raisebox{0.15cm}{\scriptsize{$Z$}}\overline{F^k(T,(\mathbb{R}^m)^k)}=\raisebox{0.15cm}{\scriptsize{$Z$}}
  \overline{\varphi^k(T,(\mathbb{R}^m)^k)}=\raisebox{0.35cm}{\scriptsize{$Z$}}
  \overline{\varphi^k\left(T,\raisebox{0.26cm}{\scriptsize{$Z$}}\overline{f_T^{(k)}(E^k)}\right)}.\]
 Since \(\varphi^k(T,\cdot)\) is regular (cf.\ (F3)), \(\varphi^k(T,\cdot)\) is continuous with respect to the Zariski topology due to Proposition 
 \ref{Proposition 1.2.17}. Hence, 
 \(W_k=\raisebox{0.32cm}{\scriptsize{$Z$}}\overline{\varphi^k\left(T,f_T^{(k)}(E^k)\right)}=\raisebox{0.15cm}{\scriptsize{$Z$}}\overline{F^k(T,E^k)} = 
  \raisebox{0.14cm}{\scriptsize{$Z$}}\overline{D_k}\)
 which proves \(W_k\) to be the Zariski closure of \(D_k\).
\end{proof}
\begin{lemma}
 \(W_k\) is irreducible for all \(k\in\mathbb{N}^*\).
 \label{Lemma 2.3.8}
\end{lemma}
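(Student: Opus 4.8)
The plan is to deduce irreducibility of $W_k$ from the fact, noted just above, that $W_k$ equals the Zariski closure of $\varphi^k(T,(\mathbb{R}^m)^k)$, i.e.\ the Zariski closure of the image of the affine space $(\mathbb{R}^m)^k$ under the map $\varphi^k(T,\cdot)$. The two purely topological ingredients are that the continuous image of an irreducible space is irreducible and that the closure of an irreducible set is irreducible; both hold verbatim for the Zariski topology over $\mathbb{R}$. So the work reduces to checking that $(\mathbb{R}^m)^k$ is irreducible and that $\varphi^k(T,\cdot)$ is Zariski-continuous.

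First I would note that $(\mathbb{R}^m)^k=\mathbb{R}^{mk}$ is an irreducible algebraic variety: its coordinate ring $\mathbb{R}[x_1,\dots,x_{mk}]$ is an integral domain, so the zero ideal is prime and $\mathbb{R}^{mk}$ is not the union of two proper Zariski-closed subsets. Next I would show by induction on $k$ that $\varphi^k(T,\cdot)\colon(\mathbb{R}^m)^k\to\mathbb{R}^n$ is a regular map: for $k=1$ it is the restriction of the jointly regular map $\varphi$ from (F3) to the slice $\{T\}\times\mathbb{R}^m\cong\mathbb{R}^m$; for the inductive step, $\varphi^k(T,y_1,\dots,y_k)=\varphi\big(\varphi^{k-1}(T,y_1,\dots,y_{k-1}),y_k\big)$ is a composition of $\varphi$ with the regular map $(y_1,\dots,y_k)\mapsto(\varphi^{k-1}(T,y_1,\dots,y_{k-1}),y_k)$, and a composition of regular maps is regular. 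By Proposition \ref{Proposition 1.2.17} a regular map is continuous for the Zariski topology. Hence $\varphi^k(T,(\mathbb{R}^m)^k)$ is an irreducible subset of $\mathbb{R}^n$ in its Zariski subspace topology, and therefore so is its Zariski closure $W_k$. (One may equally phrase the last two steps with $D_k=F^k(T,E^k)$ in place of $\varphi^k(T,(\mathbb{R}^m)^k)$, using Lemma \ref{Lemma 2.3.7}.)

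The argument is essentially formal and I do not expect a genuine obstacle; the only points needing a little care are the bookkeeping in the induction — that fixing the first argument of $\varphi$ and iterating really does preserve regularity (restriction to an affine slice, then composition) — and making sure the irreducibility lemmas are applied to the Zariski, not the Euclidean, topology, where they remain valid.
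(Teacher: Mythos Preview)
Your proposal is correct and is essentially the same argument as the paper's, just phrased at a higher level of abstraction: the paper assumes a decomposition $W_k=V_1\cup V_2$, pulls it back through the regular (hence Zariski-continuous) map $\varphi^k(T,\cdot)$ to obtain a proper closed decomposition of $(\mathbb{R}^m)^k$, and derives a contradiction from the irreducibility of $(\mathbb{R}^m)^k$ --- which is exactly the inline proof of the two topological lemmas (continuous image of irreducible is irreducible; closure of irreducible is irreducible) that you invoke directly. Your inductive verification that $\varphi^k(T,\cdot)$ is regular is a welcome detail that the paper leaves to the reader via a cross-reference to (F3) and Lemma~\ref{Lemma 2.3.7}.
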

\begin{proof}
 If we suppose that there is \(k\in\mathbb{N}^*\) such that \(W_k=V_1\cup V_2\) where \(V_1\) and \(V_2\) are algebraic sets with \(V_1\subsetneq W_k\) and
 \(V_2\subsetneq W_k\), then 
 \[(\mathbb{R}^m)^k=\left(\varphi_T^k\right)^{-1}(W_k)=\underbrace{\left(\varphi_T^k\right)^{-1}(V_1)}_{(*)}\cup
  \underbrace{\left(\varphi_T^k\right)^{-1}(V_2)}_{(**)}\]
 where \(\varphi_T^k(\cdot)=\varphi^k(T,\cdot)\). Now \((*)\) and \((**)\) are algebraic sets, because \(V_1\) and \(V_2\) are algebraic sets 
 and \(\varphi_T^k(\cdot)\) is continuous with respect to the Zariski topology (see proof of Lemma \ref{Lemma 2.3.7}). Since 
 \(\left(\varphi_T^k\right)^{-1}(V_i)\subsetneq(\mathbb{R}^m)^k\) for \(i=1,2\) (otherwise 
 \(W_k=\raisebox{0.15cm}{\scriptsize{$Z$}}\overline{\varphi^k(T,(\mathbb{R}^m)^k)}\subseteq\raisebox{0.15cm}{\scriptsize{$Z$}}\overline{V_i}=V_i\) 
 which would be a contradiction to \(V_i\subsetneq W_k\)), this would prove \((\mathbb{R}^m)^k\) to be reducible which is a contradiction.
\end{proof}
\begin{proposition}
 There exists \(l\in\mathbb{N}^*\) such that \(W_k=W_l\) for all \(k\geq l\) and \(W=W_l\). In particular, \(W\) is an algebraic variety.
 \label{Proposition 2.3.9}
\end{proposition}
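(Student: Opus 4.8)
The plan is to exploit that $(D_k)$ is an ascending sequence of subsets of $U$ (hence the Zariski closures $W_k = \raisebox{0.14cm}{\scriptsize{$Z$}}\overline{D_k}$ form an ascending chain of algebraic sets in $\mathbb{R}^n$) and invoke the Noetherian property of the Zariski topology to obtain stabilisation, then identify the stable value with $W$.

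First I would record that $W_k \subseteq W_{k+1}$ for all $k$: this is immediate from $D_k \subseteq D_{k+1}$ (shown just before Lemma \ref{Lemma 2.3.7}) together with monotonicity of Zariski closure, combined with Lemma \ref{Lemma 2.3.7}. So $(W_k)_{k\in\mathbb{N}^*}$ is an ascending chain of algebraic subsets of $\mathbb{R}^n$. Since the polynomial ring $\mathbb{R}[x_1,\ldots,x_n]$ is Noetherian, the Zariski topology on $\mathbb{R}^n$ is Noetherian, so every ascending chain of algebraic (Zariski-closed) sets stabilises; hence there is $l\in\mathbb{N}^*$ with $W_k = W_l$ for all $k\geq l$.

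Next I would identify $W_l$ with $W = \raisebox{0.14cm}{\scriptsize{$Z$}}\overline{S_T}$. By definition $S_T = \bigcup_{k\in\mathbb{N}^*} F^k(T,E^k) = \bigcup_{k\in\mathbb{N}^*} D_k$, and since the $D_k$ are ascending this equals $\bigcup_{k\geq l} D_k$. On one hand, each $D_k$ (for every $k$) satisfies $D_k \subseteq W_k \subseteq W_l$ — for $k\le l$ by the ascending chain, for $k\ge l$ by stabilisation — so $S_T \subseteq W_l$, and since $W_l$ is Zariski-closed, $W = \raisebox{0.14cm}{\scriptsize{$Z$}}\overline{S_T} \subseteq W_l$. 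On the other hand, $D_l \subseteq S_T$, so $W_l = \raisebox{0.14cm}{\scriptsize{$Z$}}\overline{D_l} \subseteq \raisebox{0.14cm}{\scriptsize{$Z$}}\overline{S_T} = W$. Therefore $W = W_l$. Finally, $W$ is an algebraic variety: it is an algebraic set (as a Zariski closure, Remark \ref{Bemerkung 2.3.1}(ii)), and it is irreducible because $W = W_l$ and $W_l$ is irreducible by Lemma \ref{Lemma 2.3.8}.

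The main obstacle is essentially bookkeeping rather than a deep difficulty: one must be careful that the stabilisation index $l$ works uniformly, i.e. that once the chain stabilises it genuinely captures \emph{all} of $\bigcup_k D_k$ and not merely a cofinal tail — this is handled cleanly by the ascending property of $(D_k)$ itself, which is why that elementary observation (using $F(T,a)=T$) was established earlier. A secondary point worth stating explicitly is that the Noetherian property applies to Zariski-closed subsets of $\mathbb{R}^n$ — here each $W_k$ is by construction a Zariski closure and hence closed, so there is no subtlety with $W_k\cap U$ versus $W_k$; the whole argument takes place in $\mathbb{R}^n$ and only afterwards is $W$ intersected with $U$.
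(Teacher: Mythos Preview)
Your proposal is correct and follows essentially the same route as the paper: stabilise the ascending chain $(W_k)$ via Noetherianity, then sandwich $W$ between $W_l$ and itself using $D_l\subseteq S_T\subseteq W_l$. The only cosmetic difference is that the paper invokes the cited stabilisation result for algebraic \emph{varieties} (hence appeals to Lemma~\ref{Lemma 2.3.8} already at that stage), whereas you use the Noetherian property for arbitrary Zariski-closed sets and defer irreducibility to the final sentence; both are fine.
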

\begin{proof}
 From \cite[Corollary  3.4.5]{BENALGSET} it follows that  if \(V_1\subseteq V_2\subseteq V_3\subseteq\ldots\) is an ascending sequence of algebraic varieties in
 \(\mathbb{R}^n\) then there exists \(l\in\mathbb{N}^*\) such that \(V_k = V_l\) for all \(k\geq l\).

 Lemma \ref{Lemma 2.3.7} and Lemma \ref{Lemma 2.3.8} show that \((W_k)_{k\in\left.\mathbb{N}\right.^*}\) is an ascending sequence of algebraic varieties 
 and so there exists \(l\in\mathbb{N}^*\) such that \(W_k=W_l\) for all \(k\geq l\). We then observe that
 \[S_T=\bigcup\limits_{k\in\left.\mathbb{N}\right.^*}\underbrace{\left\{F^k(T,y_1,\ldots,y_k):\,y_1,\ldots,y_k\in E\right\}}_{=F^k(T,E^k)}=
  \bigcup\limits_{k\in\left.\mathbb{N}\right.^*}D_k.\]
 Since 
 \[\raisebox{0.28cm}{\scriptsize{$Z$}}\overline{\bigcup\limits_{k\in\left.\mathbb{N}\right.^*}D_k}\subseteq
  \raisebox{0.28cm}{\scriptsize{$Z$}}\overline{\underbrace{\bigcup\limits_{k\in\left.\mathbb{N}\right.^*}\raisebox{0.15cm}
  {\scriptsize{$Z$}}\overline{D_k}}_{=\bigcup\limits_{k\in\left.\mathbb{N}\right.^*}W_k=W_l}}=\raisebox{0.15cm}{\scriptsize{$Z$}}\overline{W_l}=W_l\]
 and \(W_l=\raisebox{0.15cm}{\scriptsize{$Z$}}\overline{D_l}\subseteq\raisebox{0.15cm}{\scriptsize{$Z$}}\overline{\cup_{k\in\left.\mathbb{N}\right.^*}D_k}\),
 we obtain \(W=\raisebox{0.15cm}{\scriptsize{$Z$}}\overline{S_T}=W_l\).
\end{proof}
\begin{lemma}
 For all \(k\in\mathbb{N}^*\) we have \(F^k(W\cap U,(\mathbb{R}^m)^k)\subseteq W\cap U\). Hence, the Markov chain can be restricted to the variety of states 
 \(W\cap U\).
 \label{Lemma 2.3.10}
\end{lemma}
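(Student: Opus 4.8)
The plan is to reduce the statement to the one-step inclusion $F(W\cap U,\mathbb{R}^m)\subseteq W\cap U$ and then obtain the general $k$ by induction. First I would observe that the ``$\subseteq U$'' part is automatic: since $F$ maps $U\times\mathbb{R}^m$ into $U$, an immediate induction gives $F^k(z,y_1,\ldots,y_k)\in U$ whenever $z\in U$ and $y_1,\ldots,y_k\in\mathbb{R}^m$, so in particular $F^k(W\cap U,(\mathbb{R}^m)^k)\subseteq U$. Likewise, once the case $k=1$ is known, the induction step is free: for $z\in W\cap U$ one has $F^{k+1}(z,y_1,\ldots,y_{k+1})=F\big(F^k(z,y_1,\ldots,y_k),y_{k+1}\big)$, and $F^k(z,y_1,\ldots,y_k)\in W\cap U$ by the induction hypothesis, so applying the case $k=1$ to this point puts the value in $W\cap U$. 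Hence everything comes down to showing $F(W\cap U,\mathbb{R}^m)\subseteq W$.

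By (F1) we have $F(z,y)=\varphi(z,f(z,y))$ with $f(z,y)\in\mathbb{R}^m$, so it suffices to prove the purely algebraic inclusion $\varphi(W\times\mathbb{R}^m)\subseteq W$. Here the key ingredient will be the stabilisation in Proposition \ref{Proposition 2.3.9}: there is $l\in\mathbb{N}^*$ with $W=W_l=W_{l+1}$, where $W_k$ is the Zariski closure of $\varphi^k(T,(\mathbb{R}^m)^k)$. By the definition of $\varphi^{l+1}$,
\[\varphi\big(\varphi^l(T,(\mathbb{R}^m)^l)\times\mathbb{R}^m\big)=\varphi^{l+1}(T,(\mathbb{R}^m)^{l+1})\subseteq W_{l+1}=W .\]
Since $\varphi$ is regular it is Zariski-continuous (Proposition \ref{Proposition 1.2.17}), so passing to Zariski closures yields $\varphi\big(\,\overline{\varphi^l(T,(\mathbb{R}^m)^l)\times\mathbb{R}^m}\,\big)\subseteq\overline{W}=W$, where the bars denote Zariski closure. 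It then only remains to identify this closure, for which I would invoke the elementary fact that for any $A\subseteq\mathbb{R}^n$ the Zariski closure of the cylinder $A\times\mathbb{R}^m$ in $\mathbb{R}^{n+m}$ equals $\overline{A}\times\mathbb{R}^m$: if a polynomial $P(x,y)=\sum_\alpha c_\alpha(x)y^\alpha$ vanishes on $A\times\mathbb{R}^m$, then for each fixed $a\in A$ the polynomial $P(a,\cdot)$ vanishes identically, so every coefficient $c_\alpha$ vanishes on $A$ and hence on $\overline{A}$, whence $P$ vanishes on $\overline{A}\times\mathbb{R}^m$. Applied with $A=\varphi^l(T,(\mathbb{R}^m)^l)$, whose Zariski closure is $W_l=W$, this gives $\overline{\varphi^l(T,(\mathbb{R}^m)^l)\times\mathbb{R}^m}=W\times\mathbb{R}^m$, and therefore $\varphi(W\times\mathbb{R}^m)\subseteq W$, as needed.

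The step I expect to be the \emph{main obstacle} is precisely $\varphi(W\times\mathbb{R}^m)\subseteq W$. Note that assumption (F2) by itself only yields $\varphi(W\cap U\times\mathbb{R}^m)\subseteq V\cap U$, i.e.\ it places the image in $V$ but a priori not in the strictly smaller variety $W$; what rescues the argument is that $W$ has already stabilised ($W=W_l=W_{l+1}$), so that applying $\varphi$ one more time to the Zariski-dense subset $\varphi^l(T,(\mathbb{R}^m)^l)\times\mathbb{R}^m$ of $W\times\mathbb{R}^m$ cannot leave $W$. The only slightly technical point is then the cylinder-closure identity above, which is exactly what lets one pass from this dense subset to all of $W\times\mathbb{R}^m$ using nothing beyond Zariski-continuity of the regular map $\varphi$. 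Finally, the established inclusion $F^k(W\cap U,(\mathbb{R}^m)^k)\subseteq W\cap U$ is what justifies restricting the chain $X_{t+1}=F(X_t,e_t)$ to the state space $W\cap U$.
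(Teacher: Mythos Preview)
Your proof is correct and follows essentially the same route as the paper's: reduce to $k=1$ by induction, handle the $U$-part via $F:U\times\mathbb{R}^m\to U$, and obtain $\varphi(W\times\mathbb{R}^m)\subseteq W$ from the stabilisation $W=W_l=W_{l+1}$ together with Zariski-continuity of the regular map $\varphi$. The only cosmetic differences are that the paper works with the Zariski-dense subset $D_l$ (with $\raisebox{0.15cm}{\scriptsize{$Z$}}\overline{D_l}=W$ by Lemma~\ref{Lemma 2.3.7}) rather than $\varphi^l(T,(\mathbb{R}^m)^l)$, and that it states the closure step $\raisebox{0.15cm}{\scriptsize{$Z$}}\overline{\varphi(\raisebox{0.15cm}{\scriptsize{$Z$}}\overline{D_l},\mathbb{R}^m)}=\raisebox{0.15cm}{\scriptsize{$Z$}}\overline{\varphi(D_l,\mathbb{R}^m)}$ directly as a consequence of Zariski-continuity, whereas you make the underlying cylinder-closure identity $\raisebox{0.15cm}{\scriptsize{$Z$}}\overline{A\times\mathbb{R}^m}=\raisebox{0.15cm}{\scriptsize{$Z$}}\overline{A}\times\mathbb{R}^m$ explicit.
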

\begin{proof}
 With the definition of the subsets \(D_k\) and \(W_k\), respectively, one has for all $ k\in\mathbb{N}^*$ 
 \begin{equation}
  \varphi(D_k,\mathbb{R}^m)=F(D_k,\mathbb{R}^m)=F(\underbrace{F^k(T,E^k)}_{\subseteq F^k(T,(\mathbb{R}^m)^k)},\mathbb{R}^m)
  \subseteq F^{k+1}(T,(\mathbb{R}^m)^{k+1})\subseteq W_{k+1}\cap U\subseteq W\cap U .
 \label{2.11}
 \end{equation}
 The continuity of regular maps with respect to the Zariski topology and the regularity of \(\varphi\) yield
 \begin{align}
  F(W\cap U,\mathbb{R}^m) &= \varphi(W\cap U,\mathbb{R}^m)\subseteq \varphi(W,\mathbb{R}^m)=\varphi\left(\raisebox{0.15cm}{\scriptsize{$Z$}}
   \overline{D_l},\mathbb{R}^m\right)\nonumber\subseteq\raisebox{0.23cm}{\scriptsize{$Z$}}\overline{\varphi\left(\raisebox{0.15cm}{\scriptsize{$Z$}}
   \overline{D_l},\mathbb{R}^m\right)}=\raisebox{0.15cm}{\scriptsize{$Z$}}\overline{\varphi(D_l,\mathbb{R}^m)}\stackrel{\eqref{2.11}}{\subseteq}W. \nonumber
 \end{align}
 Since we assume $F:U\times \mathbb{R}^m\to U$, we have $F(W\cap U,\mathbb{R}^m)\subseteq  W\cap U$. By induction we have 
 \(F^k(W\cap U,(\mathbb{R}^m)^k)\subseteq W\cap U\) for all \(k\in\mathbb{N}^*\). Hence we can restrict the Markov chain to the variety of states \(W\cap U\).
\end{proof}
\begin{proposition}
 For all \(A\in\mathcal{B}(W\cap U)\) and all \(k\geq l\) 
 \[\liminf\limits_{\substack{z\to T\\z\in W\cap U}}P^k(z,A)\geq P^k(T,A),\]
 where \(P^k\) is the \(k\)-step transition probability kernel of the Markov chain \((X_t)_{t\in\mathbb{N}}\).
 \label{Proposition 2.3.3}
\end{proposition}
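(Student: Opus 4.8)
The plan is to realise the $k$-step transition mechanism $F^k_z(\cdot):=F^k(z,\cdot)$, driven by the noise $\bigotimes_{i=1}^{k}\Gamma$ on $(\mathbb{R}^m)^k$, as itself a semi-polynomial Markov chain of the kind studied in Section~\ref{sec:semipol} --- but living on the variety $W$ rather than $V$ --- and then apply Theorem~\ref{Satz 2.2.4} to this chain at the attracting point $z_0=T$. To set this up I would first record that $P^k(z,\cdot)$ is exactly the image measure of $\bigotimes_{i=1}^{k}\Gamma$ (which is absolutely continuous with respect to the Lebesgue measure on $(\mathbb{R}^m)^k$, with density $\prod_{i}\gamma(y_i)$) under $F^k_z$, and that $F^k$ factorises as $F^k(z,y_1,\ldots,y_k)=\varphi^k\big(z,f^{(k)}_z(y_1,\ldots,y_k)\big)$, where $\varphi^k$ is the $k$-fold iterate of $\varphi$ as in Section~\ref{Section 2.3.2} and $f^{(k)}_z$ is the ``triangular'' map $(y_1,\ldots,y_k)\mapsto\big(f_z(y_1),\,f_{F(z,y_1)}(y_2),\,\ldots,\,f_{F^{k-1}(z,y_1,\ldots,y_{k-1})}(y_k)\big)$, which for $z=T$ is precisely the map appearing in the proof of Lemma~\ref{Lemma 2.3.7}; this factorisation follows by an easy induction on $k$ from (F1).

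The next step would be to check that the triple $(F^k,\varphi^k,f^{(k)})$, together with the algebraic variety $W$, satisfies the analogues of (F1)--(F4) with $m$ replaced by $km$ and $V$ by $W$. Here (F1) is the displayed factorisation; (F3) holds because $\varphi^k$ is a composition of regular maps and hence regular; the analogue of (F2), namely $\varphi^k\big(W\cap U\times(\mathbb{R}^m)^k\big)\subseteq W\cap U$, follows from Lemma~\ref{Lemma 2.3.10} together with the remark that $f^{(k)}_z$ is onto, so that $\varphi^k\big(\{z\}\times(\mathbb{R}^m)^k\big)=F^k\big(\{z\}\times(\mathbb{R}^m)^k\big)$; and the analogue of (F4) for $f^{(k)}$ is the diffeomorphism-with-jointly-continuous-inverse property that is already established --- for $z=T$, but the same argument works verbatim for free $z\in U$ --- in the proof of Lemma~\ref{Lemma 2.3.7}: the Jacobian of $f^{(k)}_z$ in $(y_1,\ldots,y_k)$ is block lower-triangular with the invertible blocks $Df_{F^{j-1}(z,y_1,\ldots,y_{j-1})}(y_j)$ on the diagonal, and the inverse of $f^{(k)}_z$ is obtained coordinate by coordinate using the continuity of $(z,y)\mapsto f_z^{-1}(y)$ from (F4) and the continuity of the iterates $F^{j}$.

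The crux --- and the only place where the hypothesis $k\geq l$ enters --- is to show that $F^k_T(\cdot)$ has a smooth point in $(\mathbb{R}^m)^k$, which is exactly the ingredient that unlocks Theorem~\ref{Satz 2.2.4}. Since $f^{(k)}_T$ is surjective, the image of $F^k_T$ coincides with the image of the regular map $\varphi^k_T:=\varphi^k(T,\cdot)$, whose Zariski closure is $W_k$; and $W_k=W_l=W$ for $k\geq l$ by Proposition~\ref{Proposition 2.3.9}, with $W$ irreducible by Lemma~\ref{Lemma 2.3.8}. Hence $\varphi^k_T$ is a dominating regular map onto the irreducible variety $W$: the locus where $D\varphi^k_T$ attains rank $\dim W$ is a Zariski-open subset of $(\mathbb{R}^m)^k$, non-empty because the dimension of the Zariski-dense-in-$W$ image of $\varphi^k_T$ equals the generic rank of $D\varphi^k_T$; and the preimage under $\varphi^k_T$ of the regular set $\mathcal{R}(W)$ is Zariski-open with proper Zariski-closed complement (otherwise the image would lie in the singular set, of dimension $<\dim W$, contradicting density). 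These two non-empty Zariski-open sets intersect in the irreducible $(\mathbb{R}^m)^k$, yielding a smooth point $x_0$ of $\varphi^k_T$. Transporting it through the diffeomorphism via $y_0:=(f^{(k)}_T)^{-1}(x_0)$ and using $DF^k_T(y_0)=D\varphi^k_T(x_0)\,Df^{(k)}_T(y_0)$ with $Df^{(k)}_T(y_0)$ invertible shows that $y_0$ is a smooth point of $F^k_T$.

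Finally I would invoke Theorem~\ref{Satz 2.2.4} for the semi-polynomial Markov chain $(F^k,\varphi^k,f^{(k)})$ on the variety $W$ driven by $\bigotimes_{i=1}^{k}\Gamma$, at the point $z_0=T\in W\cap U$ (recall $T\in W$ and, by (A2), $T\in U$): the hypotheses are supplied by the two preceding steps, and its conclusion reads $\liminf_{z\to T,\,z\in W\cap U}P^k(z,A)\geq P^k(T,A)$ for every $A\in\mathcal{B}(W\cap U)$, since the image of $\bigotimes_{i}\Gamma$ under $F^k_z$ is precisely $P^k(z,\cdot)$. I expect the main obstacle to be the careful verification that the iterated system genuinely fits the (F1)--(F4) framework --- in particular the joint-continuity half of (F4) for $f^{(k)}$ --- and the standard but not entirely trivial fact that a dominating regular map between irreducible real varieties possesses a smooth point; once these are in place the remainder is essentially bookkeeping around Theorem~\ref{Satz 2.2.4}.
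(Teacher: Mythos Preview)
Your proposal is correct and follows essentially the same route as the paper: factor $F^k=\varphi^k\circ f^{(k)}$, use Proposition~\ref{Proposition 2.3.9} to see that $\varphi^k_T$ is dominating onto $W$ for $k\geq l$ and hence has a smooth point, transport this smooth point through the block lower-triangular diffeomorphism $f^{(k)}_T$ to obtain a smooth point of $F^k_T$, and then apply Theorem~\ref{Satz 2.2.4}. The only cosmetic differences are that the paper dispatches the existence of a smooth point of a dominating regular map by citing \cite[A~23]{MOKPROP} rather than arguing it out, and leaves the verification of (F1)--(F4) for the iterated system implicit.
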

\begin{proof}
 Since \(\varphi_T^k(\cdot)=\varphi^k(T,\cdot)\) is regular and dominating for all \(k\geq l\) (since 
 \(\raisebox{0.15cm}{\scriptsize{$Z$}}\overline{\varphi^k(T,(\mathbb{R}^m)^k)}=W_k=W\) for all \(k\geq l\), cf.\ Proposition \ref{Proposition 2.3.9}), 
 \cite[A 23]{MOKPROP} implies that \(\varphi_T^k( \cdot )\) has a smooth point.

 Similar to the proof of Theorem \ref{Satz 2.2.1} we can now show that the map \(F^k(T, \cdot )\) has a smooth point in \((\mathbb{R}^m)^k\). Let
 \(x_0\in(\mathbb{R}^m)^k\) be the smooth point of \(\varphi_T^k( \cdot )\), i.e.\ \(\varphi^k(T,x_0)\in\mathcal{R}(W)\) and 
 \(rank\left(D\varphi_T^k(x_0)\right)=\dim W\). Then \(F^k\Big(T,\left(f_T^{(k)}\right)^{-1}(x_0)\Big)=\varphi^k(T,x_0)\) and 
 \[DF^k(T, \cdot )\Big(\left(f_T^{(k)}\right)^{-1}(x_0)\Big)=D\varphi_T^k(x_0)\cdot Df_T^{(k)}\Big(\left(f_T^{(k)}\right)^{-1}(x_0)\Big)\]
 where the linear map \(Df_T^{(k)}(x)\) is invertible for all \(x\in(\mathbb{R}^m)^k\) (to this end note that \(f_T^{(k)}\) is not only continuous
 but also differentiable and that \(Df_T^{(k)}\) is a block matrix with lower triangle structure where the blocks on the diagonal are invertible). Hence the matrix on 
 the left hand side has also rank \(\dim W\) and \(\big(f_T^{(k)}\big)^{-1}(x_0)\) is a smooth point of \(F^k(T, \cdot )\). \\
 Finally, note that \(P^k(z,A)=F^k(z,\otimes_{i=1}^k\Gamma)(A)\) where \(\Gamma\) is the distribution of every \(e_t\) (cf.\ (A1)) and we conclude with Theorem
 \ref{Satz 2.2.4}.
\end{proof}
\subsubsection[Harris Recurrence, Ergodicity and \texorpdfstring{$\beta$}{beta} - Mixing]{Harris Recurrence, Ergodicity and $\beta$ - Mixing}
In this subsection we will prove the promised properties of semi-polynomial Markov chains under  a Foster-Lyapunov-condition. First we show irreducibility 
on the algebraic variety of states and aperiodicity. As usual, $\psi$ denotes a maximal irreducibility measure. 
\begin{proposition}
 Suppose that (A1) and (A2) hold. Then the semi-polynomial Markov chain \((X_t)_{t\in\mathbb{N}}\) is \(\psi\) - irreducible and aperiodic on the state space 
 \((W\cap U, \mathcal{B}(W\cap U))\). \\
 Moreover, the support of \(\psi\) has non-empty interior.
 \label{Proposition 2.3.4}
\end{proposition}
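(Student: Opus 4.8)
The plan is to establish $\psi$-irreducibility by exhibiting a candidate irreducibility measure supported near the attracting point $T$, then leverage the attracting property (A2) to show every state in $W\cap U$ leads to every set of positive measure; aperiodicity will follow because the support of $\psi$ has non-empty interior and the chain can stay near $T$.

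First I would construct the candidate measure. By Proposition \ref{Proposition 2.3.3}, for $k\geq l$ the kernel $P^k(T,\cdot)$ is ``lower semi-continuous from within $W\cap U$'' at $T$, and by Theorem \ref{Satz 2.2.1} (applied to $F^k(T,\cdot)$, which has a smooth point as shown in the proof of Proposition \ref{Proposition 2.3.3}) the measure $\Gamma_T^{(k)}:=F^k(T,\otimes_{i=1}^k\Gamma)=P^k(T,\cdot)$ is absolutely continuous with respect to $\mu_W$ with support $F^k(T,E^k)=D_k$. Since $T\in D_k$ (as $F(T,a)=T$ with $a\in\mathrm{int}(E)$) and $D_k$ contains, near any smooth point, a relatively open piece of $W$, the measure $\nu:=P^l(T,\cdot)$ is a non-trivial measure on $(W\cap U,\mathcal B(W\cap U))$ whose support contains a relatively open subset of $W\cap U$ — this is where the ``non-empty interior'' claim will come from. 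I would take $\psi$ to be (a maximal irreducibility measure equivalent to) $\nu$, or more precisely use the standard construction $\psi=\sum_{n}2^{-n}\int P^n(T,dy)\,R(y,\cdot)$ once $\nu$-irreducibility is shown.

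Next I would prove $\nu$-irreducibility: for every $z\in W\cap U$ and every $A\in\mathcal B(W\cap U)$ with $\nu(A)>0$, I must find $n$ with $P^n(z,A)>0$. By (A2), the deterministic orbit $X_t^z$ obtained by always feeding the noise value $a$ converges to $T$; since $a\in\mathrm{int}(E)$ and the density $\gamma$ is (essentially) positive on $\mathrm{int}(E)$, there is positive probability that the true chain, starting from $z$, follows a trajectory staying in a small tube around this deterministic orbit for the first $N$ steps, ending in an arbitrarily small neighbourhood $\mathcal O$ of $T$ in $W\cap U$. Concretely one uses continuity of $F$ and openness of $E$ to see $P^N(z,\mathcal O)>0$ for suitable $N$. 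Then, from any point $z'\in\mathcal O$, the lower-semicontinuity bound $\liminf_{z'\to T}P^l(z',A)\geq P^l(T,A)=\nu(A)>0$ (Proposition \ref{Proposition 2.3.3}) shrinks $\mathcal O$ so that $P^l(z',A)\geq \nu(A)/2>0$ for all $z'\in\mathcal O$. Combining via Chapman--Kolmogorov, $P^{N+l}(z,A)\geq\int_{\mathcal O}P^N(z,dz')P^l(z',A)\geq \tfrac12\nu(A)\,P^N(z,\mathcal O)>0$, giving $\nu$-irreducibility, hence $\psi$-irreducibility with $\psi$ a maximal irreducibility measure.

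For aperiodicity I would argue that the chain restricted to the open set $\mathcal O$ around $T$ has positive probability of returning in $l$ steps and also in $l+1$ steps (using $F(T,a)=T$ and Proposition \ref{Proposition 2.3.3} once more, so $P^{l+1}(z',A)\geq\tfrac12 P^l(T,A)$ for $z'$ near $T$ as well): concretely, for the small set / petite set $\mathcal O$ one has $P^l(z',\cdot)$ and $P^{l+1}(z',\cdot)$ both bounded below by positive multiples of $\nu$ on $\mathcal O$, and $\gcd\{l,l+1\}=1$ forces period $1$. Finally, the support-of-$\psi$ statement: since $\psi$ dominates $\nu=P^l(T,\cdot)$ which charges a relatively open subset of $W\cap U$ (a neighbourhood of a smooth point of $F^l(T,\cdot)$), $\mathrm{supp}(\psi)$ has non-empty interior in $W\cap U$. \textbf{The main obstacle} I anticipate is the first leg — making rigorous that ``follow the deterministic $a$-orbit'' has positive probability while remaining inside $U$ (not just inside $V$): one needs the $C^1$-regularity of $F$ in (F1)--(F4) together with $a\in\mathrm{int}(E)$ and openness of $U$ to inflate the single orbit into a positive-probability tube, and care is required because $E^k$ need only contain a ball around $(a,\dots,a)$, not all of $(\mathbb R^m)^k$, so the neighbourhoods must be chosen small enough at each of the finitely many steps.
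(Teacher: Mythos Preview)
Your approach is essentially the paper's: take $\nu=P^l(T,\cdot)$, use Proposition~\ref{Proposition 2.3.3} for the lower-semicontinuity at $T$, reach a neighbourhood of $T$ from any starting point via the deterministic $a$-orbit and continuity of $F^q$, then combine by Chapman--Kolmogorov; the non-empty interior of $\mathrm{supp}(\psi)$ comes from Theorem~\ref{Satz 2.2.1} exactly as you say. The only real difference is the aperiodicity step: the paper picks one point $x\in D_1$ and one $y\in D_d$ from the cyclic decomposition and uses that a \emph{common} $q$ works for both (forcing $d\mid q+l$ and $d\mid q+l-1$), whereas you argue with two consecutive lags $l,l+1$ from a single neighbourhood of $T$. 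Both are standard, but note a small imprecision in your version: the neighbourhood $W_1$ on which $P^l(z',A)\geq\nu(A)/2$ holds depends on $A$, so $\mathcal O$ is not a small set with a uniform minorisation $P^l(z',\cdot)\geq c\,\nu(\cdot)$ as you suggest---for aperiodicity this is harmless since you only need the bound for the finitely many cyclic classes $D_i$, but you should phrase it that way rather than invoking smallness.
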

\begin{proof}
 (1) Due to Proposition \ref{Proposition 2.3.3} we have for all \(A\in\mathcal{B}(W\cap U)\)
 \begin{equation}
  \liminf\limits_{\substack{z\to T\\z\in W\cap U}}P^l(z,A)\geq P^l(T,A).
 \label{2.12}	 
 \end{equation}
 We define a probability measure \(\nu\) on the state space \((W\cap U,\mathcal{B}(W\cap U))\) by \(\nu(A):=P^l(T,A),\,A\in\mathcal{B}(W\cap U).\)
 Then, for every \(A\in\mathcal{B}(W\cap U)\) with \(\nu(A)\neq 0\), there exists due to (\ref{2.12}) a neighbourhood \(W_1\) of \(T\) in \(W\cap U\) such that
 \begin{equation}
  P^l(z,A)\geq\frac{\nu(A)}{2}\,\forall\, z\in W_1.
 \label{2.13}
 \end{equation}
 (2) Let \(K=\left\{z_1,\ldots,z_r\right\}\subseteq W\cap U\) for some \(r\in\mathbb{N}^*\). We are going to show that there is a \(q\in\mathbb{N}^*\) such that 
 \(P^q(z_i,W_1)>0\) \(\forall i\in\left\{1,\ldots,r\right\}.\) To this end, consider for \(i=1,\ldots,r\) the sequences \((X_t^{z_i})_{t\in\mathbb{N}}\) defined by 
 \[X_0^{z_i}=z_i\qquad\text{ and }\qquad X_t^{z_i}=F(X_{t-1}^{z_i},a),\ t\geq 1\]
 where \(a\in\mathrm{int}(E)\) as in (A2).

 Due to assumption (A2) there is \(q\in\mathbb{N}^*\) such that \(X_q^{z_i}=F^q(z_i,a,\ldots,a)\in W_1\) \(\forall i\in\left\{1,\ldots,r\right\}\). Since 
 \(F^q: W\cap U\times(\mathbb{R}^m)^q\to W\cap U\) is continuous, there exists for every \(i\in\left\{1,\ldots,r\right\}\) a neighbourhood \(U_i\) of 
 \((z_i,a,\ldots,a)\) in \(W\cap U\times(\mathbb{R}^m)^q\) such that
 \[ F^q(y,y_1,\ldots,y_q)\in W_1\,\forall\,(y,y_1,\ldots,y_q)\in U_i.\]
 Then, for all \(i\in\left\{1,\ldots,r\right\}\), \(U_i\) contains \(U_i^\prime\times U_{(a,\ldots,a)}^i\) where \(U_i^\prime\) and
 \(U_{(a,\ldots,a)}^i\) are suitable neighbourhoods of \(z_i\) in \(W\cap U\) and \((a,\ldots,a)\) in \((\mathbb{R}^m)^q\), respectively.

 We define \(U_{(a,\ldots,a)}:=\bigcap\limits_{i=1}^r U_{(a,\ldots,a)}^i\) which is clearly also a neighbourhood of \((a,\ldots,a)\) in \((\mathbb{R}^m)^q\).
 Then we have \(F^q(z_i,y_1,\ldots,\allowbreak y_q)\in W_1\) for all \( i\in\left\{1,\ldots,r\right\}\) and \((y_1,\ldots,y_q)\in U_{(a,\ldots,a)}\).
 Since \(U_{(a,\ldots,a)}\) contains itself \(U_a\times\ldots\times U_a\) where \(U_a\) is an appropriate neighbourhood of \(a\) in \(\mathbb{R}^m\), we deduce 
 for all \(i\in\left\{1,\ldots,r\right\}\):
 \begin{align}
  P^q(z_i,W_1) &\geq\mathbb{P}((e_1,\ldots,e_q)\in U_{(a,\ldots,a)})\geq\mathbb{P}(e_1\in U_a)^q = \Gamma(U_a)^q.
 \label{2.14}
 \end{align}
 (3) Let \(A\in\mathcal{B}(W\cap U)\) with \(\nu(A)\neq 0\). As in (1), \(W_1\) denotes the neighbourhood of \(T\) in \(W\cap U\) such that \(P^l(z,A)\geq\nu(A)/2\) 
 for all \(z\in W_1\). Using the Chapman-Kolmogorov equation (cf.\ \cite{ Meynetal1993} Theorem 3.4.2), we obtain for every \(i\in\left\{1,\ldots,r\right\}\)
 \begin{align}
  P^{q+l}(z_i,A) &=\int_{W\cap U} P^q(z_i,dy) P^l(y,A)\geq\int_{W_1} P^q(z_i,dy) P^l(y,A)
   \stackrel{\eqref{2.13}}{\geq}\frac{\nu(A)}{2}\cdot\int_{W_1} P^q(z_i,dy) \nonumber \\
   & = \frac{\nu(A)}{2}\cdot P^q(z_i,W_1)\stackrel{\eqref{2.14}}{\geq}\frac{\Gamma(U_a)^q}{2}\cdot\nu(A). \nonumber
 \end{align}
 Due to assumption (A2) \(U_a\) contains an open set of \(E\). Thus \(\Gamma(U_a)>0\). This implies that the chain \((X_t)_{t\in\mathbb{N}}\) is \(\nu\) - irreducible
 (and thus also \(\psi\) - irreducible due to \cite[Proposition 4.2.2]{Meynetal1993}). \\
 (4) To show aperiodicity, we suppose the chain to be periodic with period \(d\). Due to \cite[Theorem 5.4.4]{Meynetal1993} there exist disjoint sets
 \(D_1,\ldots,D_d\in\mathcal{B}(W\cap U)\) such that 
 \[\mbox{ (i) } \, P(z,D_{(i\mod d) + 1}) = 1\,\forall i=1,\ldots,d \mbox{ and } z\in D_i\, \quad\mbox{ and }\quad\mbox{ (ii) }\, \psi((\cup_{i=1}^d D_i)^c) = 0.\]

 Since \(\psi\succ\nu\) (cf. \cite[Proposition 4.2.2]{Meynetal1993}), \((\cup_{i=1}^d D_i)^c\) is also a \(\nu\) - null set. Obviously there must be a set \(D_i\) with
 positive \(\nu\) - measure, let this set be \(D_1\) without loss of generality. \\
 Let \(x\in D_1\) and \(y\in D_d\). For \(K:=\left\{x,y\right\}\) we have just shown in step (3) that
 \[P^{q+l}(x,D_1)>0\qquad\text{and}\qquad P^{q+l}(y,D_1)>0\]
 for some \(q\in\mathbb{N}^*\). Hence, the integers \(q+l\) and \(q+l-1\) are divisible by \(d\). Consequently \(d=1\). \\
 (5) We have shown in step (3) that \((X_t)_{t\in\mathbb{N}}\) is \(P^l(T, \cdot )\) - irreducible. Since \(F^l(T, \cdot )\) has a smooth point in 
 \((\mathbb{R}^m)^l\) (cf.\ proof of Proposition \ref{Proposition 2.3.3}) Theorem \ref{Satz 2.2.1} implies that \(P^l(T, \cdot )=F^l(T,\otimes_{i=1}^l\Gamma)\) 
 is absolutely continuous with respect to the measure \(\mu_W\). Hence, \(\mathrm{int}(\mathrm{supp}\ P^l(T, \cdot ))\neq\emptyset\) and we also obtain that
 \(\mathrm{int}(\mathrm{supp}\ \psi)\neq\emptyset\).
\end{proof}
We can now state our main result for semi-polynomial Markov chains. Therefore we use the standard notation $PV(x):=\bbE[V(X_1)|X_0=x]=\bbE_x[V(X_1)]$.
\begin{satz}
 Suppose (A1) and (A2) are valid. If in addition the Foster-Lyapunov-condition holds, i.e. there exist a small set \(C\in \mathcal{B}(W\cap U)\), positive constants
 \(\alpha<1,\ b<\infty\) and a function \(V\geq 1\) such that
 \begin{equation}
  PV(x)\leq\alpha\cdot V(x) + b\cdot{1}_C(x) \forall x\in W\cap U
 \label{Foster-Lyapounov}
 \tag{\text{FL}},
 \end{equation}
 then the semi-polynomial  Markov chain \((X_t)_{t\in\mathbb{N}}\) is positive Harris recurrent and geometrically ergodic on the algebraic variety of states 
 \(W\cap U\). Furthermore, the strictly stationary process \((X_t)_{t\in\mathbb{Z}}\) is geometrically \(\beta\) - mixing and \(\pi(V):=\bbE[V(X_t)]<\infty\).
 \label{Satz 2.3.5}
\end{satz}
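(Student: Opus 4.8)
The plan is to read off every assertion from the classical drift-condition theory for $\psi$-irreducible Markov chains, e.g.\ \cite{Meynetal1993}, since Proposition \ref{Proposition 2.3.4} has already supplied the two structural ingredients — $\psi$-irreducibility and aperiodicity — on the correct state space. The first thing I would record is that $W\cap U$, being the intersection of the closed algebraic variety $W\subseteq\mathbb{R}^n$ with the open set $U$, is a locally compact separable metric space in its Euclidean topology, so the framework of \cite{Meynetal1993} applies; moreover the chain is (weakly) Feller because $F$ is continuous (dominated convergence in $\int f(F(x,y))\,\Gamma(dy)$ for $f\in C_b(W\cap U)$). Together with $\psi$-irreducibility and $\mathrm{int}(\mathrm{supp}\,\psi)\neq\emptyset$ from Proposition \ref{Proposition 2.3.4}, the Feller property makes the chain a $T$-chain, hence all compact subsets of $W\cap U$ are petite (\cite[Ch.\ 6]{Meynetal1993}); this is also the practical way one exhibits the small set $C$ in \eqref{Foster-Lyapounov}, although for the present statement $C$ is already assumed small, and hence petite.

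Next I would use $V\ge 1$ to pass from \eqref{Foster-Lyapounov} to the elementary drift inequality $PV-V\le-(1-\alpha)V+b\,{1}_C\le-(1-\alpha)+b\,{1}_C$ on $W\cap U$. Since $C$ is petite, $V$ is finite everywhere, and the chain is $\psi$-irreducible, the Foster--Lyapunov criteria for recurrence and positive recurrence (\cite[Ch.\ 9--11]{Meynetal1993}) give that $(X_t)_{t\in\mathbb{N}}$ is positive Harris recurrent with a unique invariant probability $\pi$ on $W\cap U$ — finiteness of $V$ on \emph{all} of $W\cap U$ being what upgrades positive recurrence to the Harris property. I would then observe that \eqref{Foster-Lyapounov} is exactly condition $(V4)$ of \cite[Ch.\ 15]{Meynetal1993}, so that, combined with $\psi$-irreducibility, aperiodicity and $C$ petite, \cite[Theorem 15.0.1]{Meynetal1993} yields $V$-uniform (in particular geometric) ergodicity: $\|P^n(x,\cdot)-\pi\|_V\le R\,V(x)\,\rho^n$ for some $\rho\in(0,1)$, $R<\infty$, together with $\pi(V)<\infty$ — the latter also seen directly by integrating \eqref{Foster-Lyapounov} against $\pi$, which gives $(1-\alpha)\pi(V)\le b$. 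Since $\|\cdot\|_{\mathrm{TV}}\le\|\cdot\|_V$, geometric ergodicity in total variation follows.

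For the remaining assertions I would take $\pi$ as one-dimensional marginal and invoke Kolmogorov's extension theorem to build the strictly stationary two-sided process $(X_t)_{t\in\mathbb{Z}}$ with the given transition kernel, noting $\pi(V)=\bbE[V(X_t)]<\infty$. To obtain geometric $\beta$-mixing I would use the standard identity for a stationary Markov chain, $\beta(n)=\int_{W\cap U}\|P^n(x,\cdot)-\pi\|_{\mathrm{TV}}\,\pi(dx)$ (see, e.g., \cite{Doukhan1994}), and bound $\beta(n)\le\int_{W\cap U}R\,V(x)\,\rho^n\,\pi(dx)=R\,\pi(V)\,\rho^n$ using the $V$-uniform ergodicity estimate and $\pi(V)<\infty$; as $\rho<1$ this is geometric decay, so $(X_t)_{t\in\mathbb{Z}}$ is geometrically $\beta$-mixing.

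I do not expect a genuinely hard estimate in this argument; the work is bookkeeping. The delicate points are: checking that the somewhat awkward state space $W\cap U$ (not compact, not necessarily connected) really satisfies the standing hypotheses of the cited theorems of \cite{Meynetal1993}; that ``small'' and ``petite'' may be used interchangeably in the aperiodic $\psi$-irreducible setting; that one extracts \emph{Harris} and not merely positive recurrence, which is exactly where finiteness of $V$ on all of $W\cap U$ enters; and that the passage from geometric ergodicity to geometric $\beta$-mixing is justified with the correct formula for $\beta(n)$ and the moment bound $\pi(V)<\infty$ supplied by $(V4)$.
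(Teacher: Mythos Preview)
Your proposal is correct and follows essentially the same route as the paper: invoke Proposition~\ref{Proposition 2.3.4} for $\psi$-irreducibility and aperiodicity on $W\cap U$, then feed the drift inequality \eqref{Foster-Lyapounov} into the standard Meyn--Tweedie machinery (their Theorems 14.2.2, 14.0.1, 15.0.1 and Proposition 9.1.7) to obtain positive Harris recurrence, geometric ergodicity and $\pi(V)<\infty$, and finally deduce geometric $\beta$-mixing from the $V$-uniform ergodicity bound integrated against~$\pi$. The only cosmetic differences are that the paper cites Davydov~\cite{Davydov1973} rather than the explicit integral formula for $\beta(n)$, and that your digression on the Feller/$T$-chain property is superfluous here since $C$ is already assumed small.
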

\begin{proof}
 Due to Proposition \ref{Proposition 2.3.4} and the assumptions (A1) and (A2), \((X_t)_{t\in\mathbb{N}}\) is \(\psi\) - irreducible and aperiodic on 
 \((W\cap U,\mathcal{B}(W\cap U))\). We conclude by using the following Theorem \ref{Satz 1.1.18}.
\end{proof}
\begin{satz}
 Let \((X_t)_{t\in\mathbb{N}}\) be a \(\psi\) - irreducible Markov chain on  a state space \((S, \mathcal{B}(S))\) with transition probability kernel \(P\).
 If the chain is aperiodic and the Foster-Lyapunov-condition holds, i.e. there exist a small set \(C\in\mathcal{B}(S)\), positive constants \(\alpha<1,\ b<\infty\) and a
 function \(V\geq 1\) such that
 \begin{equation*}
  PV(x)\leq\alpha\cdot V(x) + b\cdot{1}_C(x)\,\,\forall x\in S, 
 \end{equation*} 
 then \((X_t)_{t\in\mathbb{N}}\) is positive Harris recurrent, geometrically ergodic and the strictly stationary process \((X_t)_{t\in\mathbb{Z}}\) is geometrically
 \(\beta\) - mixing. Furthermore, \(\pi(V)<\infty\).
 \label{Satz 1.1.18}
\end{satz}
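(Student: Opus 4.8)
The statement is a compilation of classical stability results for Markov chains, so the plan is to reduce it to standard theorems rather than to prove anything genuinely new; the only work lies in checking hypotheses and quoting the right statements. First I would rewrite the Foster--Lyapunov condition in its additive (drift) form: subtracting $V(x)$ from both sides of \eqref{Foster-Lyapounov} gives
\[
 PV(x) - V(x) \;\leq\; -(1-\alpha)\,V(x) + b\,{1}_C(x),\qquad x\in S,
\]
and, since $0<\alpha<1$ and $V\geq 1$, this is precisely the geometric drift condition of Meyn and Tweedie \cite{Meynetal1993} (their condition (V4)) towards $C$. A small set is petite, so $C$ plays the role of the petite set required there, and by hypothesis the chain is $\psi$-irreducible and aperiodic.

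Second, I would establish positive Harris recurrence. Because $V$ is real-valued (hence finite everywhere) and $V\geq 1$, the drift inequality makes $(V(X_t))$ a nonnegative supermartingale up to the first return to $C$, forcing almost sure return to $C$ from every starting point; together with $\psi$-irreducibility this yields Harris recurrence \cite[Ch.~9]{Meynetal1993}. Moreover $-(1-\alpha)V\leq-(1-\alpha)$ bounds the drift by a negative constant off $C$, so Foster's criterion \cite[Ch.~11]{Meynetal1993} gives positive recurrence and a unique invariant probability $\pi$; combining these, the chain is positive Harris recurrent. Third, I would invoke the Geometric Ergodic Theorem \cite[Thm.~15.0.1]{Meynetal1993}: the geometric drift condition with petite $C$, $\psi$-irreducibility and aperiodicity furnish constants $R<\infty$ and $\rho\in(0,1)$ with $\|P^n(x,\cdot)-\pi\|_V\leq R\,V(x)\,\rho^n$ for all $x\in S$ and $n\in\bbn$, where $\|\cdot\|_V$ is the $V$-weighted total-variation norm; specialising to $\|\cdot\|_{TV}\leq\|\cdot\|_V$ gives $\|P^n(x,\cdot)-\pi\|_{TV}\leq R\,V(x)\,\rho^n$, i.e. geometric ergodicity. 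The bound $\pi(V)<\infty$ follows by integrating \eqref{Foster-Lyapounov} against $\pi$ (truncate $V$ to $V\wedge N$, use invariance, and let $N\to\infty$ by monotone convergence): $\pi(V)\leq\alpha\,\pi(V)+b$, hence $\pi(V)\leq b/(1-\alpha)<\infty$; alternatively this is immediate from \cite[Thm.~14.3.7]{Meynetal1993}.

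Finally, for the stationary two-sided extension $(X_t)_{t\in\bbz}$ I would use the representation $\beta(n)=\int_S\pi(dx)\,\|P^n(x,\cdot)-\pi\|_{TV}$ of the absolute-regularity coefficients of a stationary Markov chain (see \cite[Sec.~2.4]{Doukhan1994}), and combine it with the two previous steps: $\beta(n)\leq R\,\rho^n\int_S\pi(dx)\,V(x)=R\,\pi(V)\,\rho^n$, which decays geometrically, so $(X_t)_{t\in\bbz}$ is geometrically $\beta$-mixing. The one genuinely delicate point is this last step: one needs $\pi(V)<\infty$ to pass from pointwise geometric convergence in total variation to a geometric bound on $\beta(n)$, and one must cite the ergodicity--mixing link in a form that applies to the stationary extension rather than just to the one-sided chain; everything else is routine bookkeeping of the Meyn--Tweedie hypotheses (small $\Rightarrow$ petite, $V$ finite $\Rightarrow$ Harris, aperiodicity $\Rightarrow$ geometric rate).
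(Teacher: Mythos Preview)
Your proposal is correct and follows essentially the same route as the paper: rewrite the Foster--Lyapunov condition as a geometric drift inequality, use small $\Rightarrow$ petite together with $\psi$-irreducibility to obtain (positive) Harris recurrence, invoke \cite[Thm.~15.0.1]{Meynetal1993} for geometric ergodicity and $\pi(V)<\infty$, and then integrate the $V$-norm bound against $\pi$ to get geometric $\beta$-mixing. The only cosmetic differences are the specific citations---the paper quotes \cite[Thm.~14.2.2, Prop.~9.1.7, Thm.~14.0.1]{Meynetal1993} for the recurrence steps and \cite{Davydov1973} rather than \cite{Doukhan1994} for the $\beta$-mixing formula---but the logical skeleton is identical.
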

\begin{proof}
 Since the Foster-Lyapunov-condition holds, the non-negative functions $V^\prime:=V-1,\,f:=1-\al$ and $s:=b\mathds{1}_C$ satisfy the assumption of Theorem
 \ref{Theorem B.1}. Hence we obtain for the first return time to $C$, denoted by $\tau_C$, 
 \[\bbE_x[\tau_C]=\frac{1}{1-\al}\bbE_x\bigg[\sum_{k=0}^{\tau_C-1} f(X_k)\bigg]\leq\frac{1}{1-\al}\bigg(V(x)-1+
  \underbrace{\bbE_x\bigg[\sum_{k=0}^{\tau_C-1} s(X_k)\bigg]}_{=b\mathds{1}_C(x)}\bigg)<\infty\,\forall x\in S\]
 and thus obviously $L(x,C)=\bbP_x(\tau_C<\infty)=1$ for all $x\in S$. Since every small set is also petite (cf. \cite{MeynTweedie1992} or \cite{Meynetal1993}),
 Proposition \ref{Proposition B.2} yields Harris recurrence of $(X_t)_{t\in\bbn}$. Again the Foster-Lyapunov-condition shows that $V^\prime:=(1-\al)^{-1}V,\,f:=V$ 
 and $b^\prime:=(1-\al)^{-1}b$ satisfy (ii) of Theorem \ref{Theorem B.3} which implies that $(X_t)_{t\in\bbn}$ is positive and $\pi(V)<\infty$. It is once more 
 the same condition that yields directly geometric ergodicity by virtue of Theorem \ref{Theorem B.4}. Finally, combining \cite[Proposition 1 (1)]{Davydov1973} with
 (\ref{Equation B.1}) and $\pi(V)<\infty$, we deduce that the strictly stationary process $(X_t)_{t\in\bbz}$ is geometrically $\beta$-mixing.
\end{proof}
\begin{satz}
 Suppose the setting of Theorem \ref{Satz 2.3.5} and assume in addition that (A3) holds. Then the strictly stationary process is unique.
 \label{Satz 2.3.6}
\end{satz}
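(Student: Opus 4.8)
The plan is to prove uniqueness of the strictly stationary solution by combining the ergodicity obtained in Theorem \ref{Satz 2.3.5} with the confinement provided by assumption (A3). First I would fix an arbitrary strictly stationary solution $(\tilde X_t)_{t\in\bbz}$ of the Markov chain $X_{t+1}=F(X_t,e_t)$. By (A3), every $\tilde X_t$ takes its values in $W\cap U$ almost surely, so $(\tilde X_t)_{t\in\bbz}$ is in fact a strictly stationary process on the state space $(W\cap U,\mathcal B(W\cap U))$ driven by the same transition kernel $P$ as the chain studied in Theorem \ref{Satz 2.3.5}. Let $\tilde\pi$ denote the common law of the $\tilde X_t$; it is then an invariant probability measure for $P$ on $(W\cap U,\mathcal B(W\cap U))$.

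The core of the argument is that a $\psi$-irreducible, positive Harris recurrent Markov chain admits a unique invariant probability measure. Since Theorem \ref{Satz 2.3.5} tells us that $(X_t)_{t\in\bbn}$ is $\psi$-irreducible (Proposition \ref{Proposition 2.3.4}) and positive Harris recurrent on $W\cap U$, its stationary distribution $\pi$ is the unique invariant probability measure on that state space (this is the standard uniqueness statement for positive recurrent $\psi$-irreducible chains, see e.g. \cite{Meynetal1993}, Theorems 10.0.1 and 10.4.9). Hence $\tilde\pi=\pi$, i.e. all stationary solutions have the same one-dimensional marginal. Because the finite-dimensional distributions of any stationary solution of the Markov recursion are determined by the initial marginal together with the kernel $P$ (via $\bbP(\tilde X_0\in A_0,\ldots,\tilde X_k\in A_k)=\int_{A_0}\tilde\pi(dx_0)\int_{A_1}P(x_0,dx_1)\cdots\int_{A_k}P(x_{k-1},dx_k)$, and analogously for negative indices using stationarity to shift), coincidence of the marginals forces coincidence of all finite-dimensional distributions. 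By Kolmogorov's extension theorem this yields equality in law of the whole process, which is exactly the uniqueness claimed.

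The step I expect to require the most care is the passage from ``unique invariant probability measure'' to ``unique stationary \emph{process}'': one must make sure that an arbitrary strictly stationary solution of $X_{t+1}=F(X_t,e_t)$ is genuinely a (time-homogeneous) Markov chain with kernel $P$ — this uses that the $e_t$ are i.i.d.\ and, for the two-sided process, independent of the past $(\tilde X_s)_{s\le t}$, which is part of what ``strictly stationary solution of the recursion'' should mean here — and then to invoke (A3) to land in $W\cap U$ so that the uniqueness theorem of Markov chain theory applies on the right state space. Everything else is the routine invocation of the already-established Harris recurrence together with the standard Markov-chain uniqueness result; no new estimate is needed.
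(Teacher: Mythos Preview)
Your proposal is correct and follows essentially the same route as the paper: use (A3) to force any stationary solution into $W\cap U$, observe that its marginal is a $P$-invariant probability on that state space, and then invoke the uniqueness of the invariant probability for the recurrent chain established in Theorem~\ref{Satz 2.3.5} (the paper cites \cite[Theorem 10.4.4]{Meynetal1993} rather than 10.0.1/10.4.9, but the content is the same). Your additional remarks about passing from uniqueness of the marginal to uniqueness of the process are just an explicit unpacking of what the paper leaves implicit.
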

\begin{proof}
 If there is another \(P\) - invariant probability measure \(\pi^\prime\), then \(\mathrm{supp}(\pi^\prime)\subseteq W\cap U\) due to (A3). Since the chain
 \((X_t)_{t\in\mathbb{N}}\) is recurrent on \(W\cap U\) (Theorem \ref{Satz 2.3.5}), it has at most one \(P\) - invariant probability measure on 
 \((W\cap U,\mathcal{B}(W\cap U))\) (cf.\ \cite[Theorem 10.4.4]{ Meynetal1993}). Therefore the strictly stationary solution is unique.
\end{proof}
\begin{bemerkung}
 Note that the whole theory from algebraic geometry has only been used to prove irreducibility and aperiodicity on the algebraic variety of states $W\cap U$. The 
 results thereafter (Theorems \ref{Satz 1.1.18} and \ref{Satz 2.3.6}) are consequences of the theory of Markov chains.
\end{bemerkung}
%
%
\section[Proof of the Main Theorem]{Proof of the Main Theorem \ref{th:maintheo}}
\label{sec:mgarchproof}
In this section we gradually prove our main result for multivariate GARCH processes, Theorem \ref{th:maintheo}.
\subsection{GARCH Processes as Semi-Polynomial Markov Processes}
\label{sec:mgarchsemipol}
First we show that the autoregressive representation of standard GARCH processes involving the function $\varphi$ leads to a semi-polynomial Markov chain.
\begin{lemma}
 The mapping $G:\mathbb{S}_d^{+}\to\mathbb{S}_d^{+}$  which maps \(\Sigma\) to \(\Sigma^{1/2}\) is a \(C^1\) - diffeomorphism on \(\mathbb{S}_d^{++}\).
 \label{Lemma 3.2.5}
\end{lemma}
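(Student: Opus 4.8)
The plan is to show that the matrix square root is a $C^1$-diffeomorphism of $\mathbb{S}_d^{++}$ onto itself by exhibiting its smooth inverse and invoking the inverse function theorem. First I would observe that $G$ restricted to $\mathbb{S}_d^{++}$ is a bijection onto $\mathbb{S}_d^{++}$: the positive semi-definite square root of a positive definite matrix is again positive definite, and conversely squaring maps $\mathbb{S}_d^{++}$ into $\mathbb{S}_d^{++}$; uniqueness of the positive semi-definite square root gives bijectivity, with inverse the squaring map $H\mapsto H^2$. The squaring map is obviously a polynomial map in the entries of $H$, hence $C^\infty$, so the only thing that requires work is the smoothness of $G$ itself.

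For smoothness of $G$, I would use the inverse function theorem applied to the squaring map $P:\mathbb{S}_d^{++}\to\mathbb{S}_d^{++}$, $P(H)=H^2$. The key computation is its derivative: $DP(H)$ is the linear operator on $\mathbb{S}_d$ given by $K\mapsto HK+KH$. It then suffices to show $DP(H)$ is invertible for every $H\in\mathbb{S}_d^{++}$, i.e. that the Sylvester operator $K\mapsto HK+KH$ has trivial kernel on $\mathbb{S}_d$. This is standard: the eigenvalues of the Sylvester operator $K\mapsto HK+KH$ are the pairwise sums $\lambda_i+\lambda_j$ of eigenvalues of $H$, which are all strictly positive when $H>0$; equivalently, if $HK+KH=0$ then, diagonalising $H=Q\,\mathrm{diag}(\lambda_i)\,Q^t$ and writing $\tilde K=Q^tKQ$, one gets $(\lambda_i+\lambda_j)\tilde K_{ij}=0$ for all $i,j$, forcing $\tilde K=0$, hence $K=0$. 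Thus $DP(H)$ is a bijection on $\mathbb{S}_d$, and by the inverse function theorem $P$ is a local $C^\infty$-diffeomorphism near every point of $\mathbb{S}_d^{++}$; since $P$ is also a global bijection onto $\mathbb{S}_d^{++}$ with the continuous inverse $G$ (continuity of $G$ follows, e.g., from the continuity of the eigenvalue decomposition or from a direct compactness argument), $G=P^{-1}$ is globally $C^\infty$, in particular $C^1$, on $\mathbb{S}_d^{++}$.

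I expect the main obstacle — if any — to be purely bookkeeping: one should be a little careful that $\mathbb{S}_d^{++}$ is an open subset of the vector space $\mathbb{S}_d$ (so that ``$C^1$-diffeomorphism'' makes sense in the usual Euclidean sense via the $\mathrm{vech}$ identification), and that the inverse function theorem is being applied on this vector space rather than on the ambient $M_d(\mathbb{R})$; both are immediate since the square of a symmetric matrix is symmetric and the set of positive definite matrices is open. There is nothing genuinely hard here; the content is the eigenvalue-sum description of the Sylvester operator, which guarantees nondegeneracy of the differential precisely on the positive definite cone (and explains why one cannot extend the diffeomorphism property to the boundary, where zero eigenvalues make $\lambda_i+\lambda_j$ vanish).
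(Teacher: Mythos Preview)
Your proof is correct and follows essentially the same approach as the paper: both consider the squaring map $P(H)=H^2$ on $\mathbb{S}_d^{++}$, show it is bijective via uniqueness of the positive definite square root, compute its differential as the Sylvester operator $K\mapsto HK+KH$, and conclude via the inverse function theorem once this operator is shown to be injective. The only difference is that you prove injectivity of the Sylvester operator directly by diagonalising $H$ (yielding the eigenvalue sums $\lambda_i+\lambda_j>0$), whereas the paper outsources this step to a reference on matrix quadratic equations; your argument is more self-contained.
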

\begin{proof}
 We define \(f_d: \mathbb{S}_d^{++}\to\mathbb{S}_d^{++},\ X\mapsto X\cdot X\). Since every \(X\in\mathbb{S}_d^{++}\) has full rank, \(f_d\) is well-defined.
 We will show that \(f_d\) is bijective and that, for every \(X\in\mathbb{S}_d^{++}\), the differential \(df_d(X)\) is a linear homeomorphism.

 By \cite[Theorem 7.2.6]{HORMAT}, a positive definite matrix has a unique positive definite square root. Hence \(f_d\) is bijective. Let \(X\in\mathbb{S}_d^{++}\). 
 The differential of \(f_d\) at the point \(X\) is given by
 \[\forall H\in\mathbb{S}_d\qquad df_d(X)H=\left.\frac{\partial}{\partial t}f_d(X+tH)\right|_{t=0}=HX+XH.\]
 Our aim is to show that \(H=0\) whenever \(df_d(X)H=0\). In fact, this is a simple consequence of \cite[Theorem 1]{POTMAT} where the solutions \(X\) of the general
 matrix quadratic equation \(0=A+BX+XB^t-XCX\) for fixed \(A,B,C\in M_d(\mathbb{R})\) have been analysed.
\end{proof}
Since \(X_n = \Sigma_n^{1/2}\epsilon_n\), we thus obtain that 
\begin{align}
 f: U\times\mathbb{R}^d\to\mathbb{R}^d\nonumber,\quad (Y_{n-1},\epsilon_n)\mapsto G(\Sigma_n)\epsilon_n=X_n \nonumber
\end{align}
is a \(C^1\) - map from \(U\times\mathbb{R}^d\) into \(\mathbb{R}^d\) where \(U\) is the open set in 
\(\left(\mathbb{R}^{d(d+1)/2}\right)^p\times\left(\mathbb{R}^d\right)^{q}\) defined by
\[U := \underbrace{\mathrm{vech}(\mathbb{S}_d^{++})\times\ldots\times\mathrm{vech}(\mathbb{S}_d^{++})}_{p}\times
 \underbrace{\mathbb{R}^d\times\ldots\times\mathbb{R}^d}_{q}.\]
Due to the assumption that \(C\) and the initial values $\Sigma_0,\ldots,\Sigma_{1-p}$ are positive definite, every \(\Sigma_n\) and \(\Sigma_n^{1/2}\) is also 
positive definite  and thus \(G(\Sigma_n)=\Sigma_n^{1/2}\) is always an invertible matrix. Hence, for every \(Y\in U\), the map \(f_Y( \cdot )=f(Y, \cdot )\) is 
linear bijective from \(\mathbb{R}^d\) onto \(\mathbb{R}^d\), i.e.\ \(f_Y( \cdot )\) is a \(C^1\) - diffeomorphism. Moreover the map
\(U\times\mathbb{R}^d\to\mathbb{R}^d,\ (Y,\epsilon)\mapsto f_Y^{-1}(\epsilon)\) is continuous in \((Y,\epsilon)\) where \(f_Y^{-1}( \cdot )\) denotes the inverse 
of \(f_Y( \cdot )\).

Altogether we are thus in our setting of semi-polynomial Markov chains. For the  Markovian  representation $Y_n$ of a standard GARCH($p,q$) process $X_n$ we have
\begin{equation}
 Y_n=F(Y_{n-1},\epsilon_n):=\varphi(Y_{n-1},\underbrace{f_{Y_{n-1}}(\epsilon_n)}_{=X_n})
 \label{eq:GARCHstatespace}
\end{equation}
where \(F\) is  a \(C^1\) - map from \(U\times\mathbb{R}^d\) into \(U\). Moreover, it is obvious that \eqref{3.7}, \eqref{3.8} have a stationary solution if and only if                                                                                                                                                                     
\eqref{eq:GARCHstatespace} has one.
\subsection{Some Results from Linear Algebra}
\label{Section 3.3.2}
In this section we will show some results from linear algebra which will be necessary to establish the Foster-Lyapunov-condition for multivariate GARCH models.

Let \(n\in\mathbb{N}^*\) and \((F_i)_{1\leq i\leq n}\) be  elements of \(M_d(\mathbb{R})\). We set
\begin{align}
 \xi: M_d(\mathbb{R})&\to M_d(\mathbb{R}),\,\,\xi(M):=\sum\limits_{i=1}^nF_iMF_i^t. \nonumber
\end{align}
This map is obviously linear. We can consider \(\xi\) a linear map from \(\mathbb{R}^{d^2}\) into \(\mathbb{R}^{d^2}\) using the \(\mathrm{vec}\)
operator as follows :
\[\mathrm{vec}(\xi(M)) 
 = \bigg(\sum\limits_{i=1}^nF_i\otimes F_i\bigg)\mathrm{vec}(M) =F\mathrm{vec}(M) \mbox{ where } F:=\sum\limits_{i=1}^nF_i\otimes F_i.\]

Note that we have \(\xi(\mathbb{S}_d)\subseteq\mathbb{S}_d\), i.e.\ the symmetric \(d\times d\) matrices are mapped into themselves by \(\xi\). We denote 
by \(\tilde{\xi}\) the restriction of \(\xi\) to the linear subspace \(\mathbb{S}_d\). Using the $\mathrm{vech}$ operator, we obtain, for all \(M\in\mathbb{S}_d\),
\begin{align}
 \mathrm{vech}(\tilde{\xi}(M)) &= \mathrm{vech}(\xi(M))\nonumber= H_d\ \mathrm{vec}(\xi(M))= H_dF\ \mathrm{vec}(M)
  = H_dFK_d^t\ \mathrm{vech}(M). \nonumber  
\end{align}
Since we can identify \(\mathbb{S}_d\) via the \(\mathrm{vech}\) operator with \(\mathbb{R}^{d(d+1)/2}\), the transformation matrix of \(\tilde{\xi}\) is given by
\[\tilde{F}:=H_dFK_d^t.\]
We obtain the following lemma:
\begin{lemma}
 Let \(C\in\mathbb{S}_d^{++}\). The following statements are equivalent:
 \begin{enumerate}
  \item
   The spectral radius of \(\xi\) is less than \(1\).
  \item
   The spectral radius of \(\tilde{\xi}\) is less than \(1\).
  \item
   There is \(\Sigma\in\mathbb{S}_d^{++}\) such that \(\Sigma = C + \xi(\Sigma)\).
 \end{enumerate}
 \label{Lemma 3.3.1}
\end{lemma}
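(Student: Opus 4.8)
The plan is to prove the cycle of implications $(i)\Rightarrow(ii)\Rightarrow(iii)\Rightarrow(i)$, exploiting the vectorised representations developed just before the lemma, namely that $\xi$ corresponds to the matrix $F=\sum_{i=1}^n F_i\otimes F_i$ acting on $\bbr^{d^2}$, that $\tilde\xi$ corresponds to $\tilde F=H_dFK_d^t$ acting on $\bbr^{d(d+1)/2}$, and that $\xi(\mathbb{S}_d)\subseteq\mathbb{S}_d$.

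First I would show $(i)\Leftrightarrow(ii)$. The restriction $\tilde\xi$ of $\xi$ to the invariant subspace $\mathbb{S}_d$ has spectrum contained in that of $\xi$, so $(i)\Rightarrow(ii)$ is immediate. For the converse, the point is that $\xi$ carries \emph{all} of its spectral information already on $\mathbb{S}_d$: writing any $M\in M_d(\bbr)$ as $M=\tfrac12(M+M^t)+\tfrac12(M-M^t)$ and noting $\xi(M)^t=\sum F_iM^tF_i^t=\xi(M^t)$, one sees $\xi$ commutes with transposition, hence also leaves the antisymmetric matrices invariant. More usefully, if $\la$ is an eigenvalue of $\xi$ with (possibly complex) eigenvector $M$, then from $\xi(M)=\la M$ and $\xi(M^*)=\overline\la M^*$ (conjugate-transpose) one produces a Hermitian eigenvector-type object, and since $\xi$ preserves positive semidefiniteness one concludes $\la$ must in fact be attained on $\mathbb{S}_d$; alternatively, and more cleanly, since $\xi=\tilde\xi$ on $\mathbb{S}_d$ and $\xi$ preserves the real structure and the cone $\mathbb{S}_d^+$, a Perron--Frobenius-type argument for cone-preserving maps (e.g. the Krein--Rutman theorem) gives that the spectral radius of $\xi$ is itself an eigenvalue with eigenvector in $\mathbb{S}_d^+\subseteq\mathbb{S}_d$, whence $\rho(\xi)=\rho(\tilde\xi)$. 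Either route yields $(ii)\Rightarrow(i)$.

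Next, $(ii)\Rightarrow(iii)$. If $\rho(\tilde\xi)<1$ then $I-\tilde F$ is invertible and the Neumann series $\sum_{k\ge0}\tilde F^k$ converges; equivalently $\sum_{k\ge0}\xi^k(C)$ converges in $\mathbb{S}_d$ to a matrix $\Sigma$ solving $\Sigma=C+\xi(\Sigma)$. Since $C\in\mathbb{S}_d^{++}$ and every $\xi^k(C)\in\mathbb{S}_d^+$ (as $\xi$ preserves $\mathbb{S}_d^+$), the limit $\Sigma$ is a sum of a strictly positive definite matrix and positive semidefinite matrices, hence $\Sigma\in\mathbb{S}_d^{++}$, giving $(iii)$.

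Finally, $(iii)\Rightarrow(ii)$ — this is the step I expect to be the real obstacle, since from a single fixed point one must extract a spectral bound. The idea is a Lyapunov/monotonicity argument in the cone. Given $\Sigma\in\mathbb{S}_d^{++}$ with $\Sigma=C+\xi(\Sigma)$, one has $\xi(\Sigma)=\Sigma-C<\Sigma$ in the Loewner order, i.e. $\Sigma$ is a strict supersolution. By positivity/monotonicity of $\xi$ on the cone (if $0\le A\le B$ then $0\le\xi(A)\le\xi(B)$), iterating gives $\xi^k(\Sigma)\le\xi^{k-1}(\Sigma)-\xi^{k-1}(C)\le\cdots$, a decreasing sequence in $\mathbb{S}_d^+$ bounded below by $0$, so it converges; combined with $\xi^{k-1}(C)\ge c\,\xi^{k-1}(I)$ for some $c>0$ (using $C\ge cI$) and the fact that $\Sigma\le c'I$ for some $c'$, one forces $\xi^{k}(I)\to 0$ in norm, which means $\rho(\xi)<1$; restricting to $\mathbb{S}_d$ gives $\rho(\tilde\xi)<1$. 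The delicate points are justifying the strict comparison and ensuring the decay is \emph{geometric} (equivalently that $\rho<1$ rather than merely $\rho\le1$): here one uses that $\Sigma-C$ is bounded strictly below $\Sigma$, i.e. $\xi(\Sigma)\le(1-\delta)\Sigma$ for some $\delta>0$ coming from $C\ge\delta\Sigma$, which by monotonicity yields $\xi^k(\Sigma)\le(1-\delta)^k\Sigma$ and hence $\rho(\xi)\le 1-\delta<1$. Closing this estimate carefully is the crux of the lemma.
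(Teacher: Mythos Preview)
Your $(i)\Rightarrow(ii)$ and $(ii)\Rightarrow(iii)$ coincide with the paper's, and your Lyapunov argument for $(iii)\Rightarrow(ii)$ is correct and genuinely different from the paper: from $C\ge\delta\Sigma$ you get $\xi(\Sigma)=\Sigma-C\le(1-\delta)\Sigma$, and monotonicity of $\xi$ in the Loewner order then yields $\xi^k(\Sigma)\le(1-\delta)^k\Sigma$, which forces $\rho(\tilde\xi)\le 1-\delta$ since $\Sigma$ is interior to $\mathbb{S}_d^+$. This is a clean cone-theoretic route.

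The gap is in your $(ii)\Rightarrow(i)$. Your appeal to Krein--Rutman does not apply as stated: $\mathbb{S}_d^+$ has empty interior in $M_d(\mathbb{R})$, so the theorem does not guarantee that $\rho(\xi)$ is attained on a symmetric eigenvector. Equivalently, $\xi^k(I)\to 0$ or $\xi^k(\Sigma)\le(1-\delta)^k\Sigma$ only controls $\xi$ on $\mathbb{S}_d$, not on skew-symmetric matrices, so you cannot conclude $\rho(\xi)<1$ from this alone. The equality $\rho(\xi)=\rho(\tilde\xi)$ is true, but it requires the Perron--Frobenius theory for positive maps on $M_d(\mathbb{C})$ together with a real-part argument (if $M$ is a Hermitian PSD eigenvector for $\rho(\xi)$, then $\mathrm{Re}(M)$ is real symmetric PSD, nonzero, and still an eigenvector since the $F_i$ are real); your sketch does not supply this.

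The paper sidesteps the issue entirely by proving $(iii)\Rightarrow(i)$ directly: it introduces the norm $\|P\|_\Sigma=\sup_{x^*\Sigma x=1}|x^*Px|$ on $M_d(\mathbb{C})$ and, for any eigenpair $\xi(M)=\lambda M$, estimates
\[
|\lambda|\,|x^*Mx|=\Bigl|\sum_i (F_i^tx)^*M(F_i^tx)\Bigr|\le\|M\|_\Sigma\, x^*\xi(\Sigma)x=\|M\|_\Sigma\, x^*(\Sigma-C)x,
\]
then takes $x=x_M$ achieving the norm to obtain $|\lambda|\le 1-x_M^*Cx_M<1$. This handles all of $M_d(\mathbb{C})$ at once and is more elementary than invoking Perron--Frobenius for positive maps; your monotonicity argument, on the other hand, makes the quantitative bound $\rho\le 1-\delta$ transparent and generalises readily to abstract order-preserving maps on cones.
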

\begin{proof}
 (i) \(\Rightarrow\) (ii): Obvious since \(\tilde{\xi}\) is a restriction of \(\xi\). \\
 (ii) \(\Rightarrow\) (iii): If the spectral radius of \(\tilde{\xi}\) is less than \(1\), then the Neumann series \(\sum_{n=0}^{\infty}\tilde{\xi}^n\) is convergent with 
 respect to a suitable operator norm. We define
 \begin{equation}
  \Sigma:=\sum\limits_{n=0}^{\infty}\tilde{\xi}^n(C).
 \label{3.9}
 \end{equation}
 Clearly, \(\Sigma\) is symmetric. Moreover, for all \(M\in\mathbb{S}_d^+\), we have \(\tilde{\xi}(M)\in\mathbb{S}_d^+\) by the definition of 
 \(\tilde{\xi}\) and \(\xi\), respectively. By iteration we obtain that \(\tilde{\xi}^n(M)\in\mathbb{S}_d^+\) for all \(n\in\mathbb{N}^*\). Thus the matrix
 \(\Sigma-\tilde{\xi}^0(C)=\Sigma-C\) is symmetric and positive semi-definite. This implies that \(\Sigma\) is positive definite. \\
 Since \(\xi\) and \(\tilde{\xi}\) coincide on \(\mathbb{S}_d\), we deduce 
 \(\Sigma = \sum\limits_{n=0}^{\infty}\xi^n(C)= C + \xi\Big(\sum\limits_{n=1}^{\infty}\xi^{n-1}(C)\Big)= C + \xi(\Sigma).\) \\
 (iii) \(\Rightarrow\) (i): Suppose that there exists \(\Sigma\in\mathbb{S}_d^{++}\) such that \(\Sigma = C + \xi(\Sigma)\). We denote the complex
 \(d\times d\) matrices by \(M_d(\mathbb{C})\) and the conjugate transpose of a vector \(x\in\mathbb{C}^d\) by \(x^*\). For every \(P\in M_d(\mathbb{C})\) we 
 define
 \[\left\|P\right\|_{\Sigma}:= \sup\limits_{x\in\mathbb{C}^d,\ x^*\Sigma\;x=1}\left|x^*Px\right|\]
 which is a norm on \(M_d(\mathbb{C})\) since \(\Sigma\in\mathbb{S}_d^{++}\).

 Then, for all \(x\in\mathbb{C}^d\), \(\left|x^*Px\right|\leq\left\|P\right\|_{\Sigma}(x^*\Sigma x).\) Since the unit sphere 
 \(\left\{x\in\mathbb{C}^d:\ x^*\Sigma x = 1\right\}\) is compact, there exists, for every \(P\in M_d(\mathbb{C})\), a vector \(x_p\in\mathbb{C}^d\) such that
 \(\left\|P\right\|_{\Sigma}=\left|x_P^*Px_P\right|\text{ with } x_P^*\Sigma x_P = 1.\) Let now \(\lambda\) be an eigenvalue of \(\xi\). Then there is an 
 \(M\in M_d(\mathbb{C}),\ M\neq 0\) such that \(\lambda M = \xi(M) = \sum\limits_{i=1}^nF_iMF_i^t.\) For every \(x\in\mathbb{C}^d\), we deduce
 \begin{align}
  \left|\lambda\right|\cdot\left|x^*Mx\right| &= \bigg|\sum\limits_{i=1}^nx^*F_iMF_i^tx\bigg|\leq
   \sum\limits_{i=1}^n\left|\left(F_i^tx\right)^*M\left(F_i^tx\right)\right|\leq\left\|M\right\|_{\Sigma}\sum\limits_{i=1}^nx^*F_i\Sigma F_i^tx
   =\left\|M\right\|_{\Sigma}x^*\underbrace{\bigg(\sum\limits_{i=1}^nF_i\Sigma F_i^t\bigg)}_{=\xi(\Sigma)=\Sigma-C}x. \nonumber
 \end{align}
 If we choose \(x_M\) such that \(\left\|M\right\|_{\Sigma}=\left|x_M^*Mx_M\right|\) and \(x_M^*\Sigma x_M=1\), we obtain 
 \(\left|\lambda\right|\leq 1-x_M^*Cx_M < 1\) (note that \(\left\|M\right\|_{\Sigma}\neq 0\)). Hence the spectral radius of \(\xi\) is less than 1.
\end{proof}
We consider now the families of matrices \((\bar{A}_{i,k},\bar{B}_{j,r})\) and \((A_i,B_j)\) which occur in the BEKK and \(\mathrm{vech}\) representation 
of a standard GARCH\((p,q)\) model, respectively.
\begin{proposition}
 The spectral radius of \(\sum_{i=1}^qA_i+\sum_{j=1}^pB_j\) is less than \(1\) if and only if there exists \(\Sigma\in\mathbb{S}_d^{++}\) such that
 \begin{equation}
  \Sigma = C + \sum\limits_{i=1}^q\sum\limits_{k=1}^{l_i}\bar{A}_{i,k}\Sigma\bar{A}_{i,k}^t + 
   \sum\limits_{j=1}^p\sum\limits_{r=1}^{s_j}\bar{B}_{j,r}\Sigma\bar{B}_{j,r}^t.
 \label{eq:BEKKfixedpoint}
 \end{equation}
\label{Proposition 3.3.2}
\end{proposition}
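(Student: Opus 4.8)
The plan is to deduce this directly from Lemma \ref{Lemma 3.3.1}, applied to the family consisting of \emph{all} coefficient matrices appearing in the BEKK equation \eqref{3.6}. Concretely, I would relabel the collection $\{\bar{A}_{i,k}\colon 1\le i\le q,\ 1\le k\le l_i\}\cup\{\bar{B}_{j,r}\colon 1\le j\le p,\ 1\le r\le s_j\}$ as a single family $(F_\ell)_{1\le\ell\le n}$ with $n=\sum_{i=1}^q l_i+\sum_{j=1}^p s_j$, and let $\xi$, $\tilde\xi$, $F=\sum_{\ell=1}^n F_\ell\otimes F_\ell$ and $\tilde F=H_dFK_d^t$ be the objects associated with this family as in Section \ref{Section 3.3.2}. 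With this choice $\xi(\Sigma)=\sum_{i=1}^q\sum_{k=1}^{l_i}\bar{A}_{i,k}\Sigma\bar{A}_{i,k}^t+\sum_{j=1}^p\sum_{r=1}^{s_j}\bar{B}_{j,r}\Sigma\bar{B}_{j,r}^t$, so that the fixed-point equation \eqref{eq:BEKKfixedpoint} is precisely statement (iii) of Lemma \ref{Lemma 3.3.1} (here one uses that $C\in\mathbb{S}_d^{++}$, which holds by the standing assumptions on a standard GARCH process). Since that lemma gives (ii)$\Leftrightarrow$(iii), it remains only to show that the spectral radius of $\tilde\xi$, equivalently of its transformation matrix $\tilde F$, equals the spectral radius of $\sum_{i=1}^qA_i+\sum_{j=1}^pB_j$.

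For the latter identification I would just compute $\tilde F$ with the Kronecker-sum relations recalled after \eqref{3.6}. Splitting $F$ into the two types of summands and using $\tilde A_i=\sum_{k=1}^{l_i}\bar{A}_{i,k}\otimes\bar{A}_{i,k}$ and $\tilde B_j=\sum_{r=1}^{s_j}\bar{B}_{j,r}\otimes\bar{B}_{j,r}$ yields $F=\sum_{i=1}^q\tilde A_i+\sum_{j=1}^p\tilde B_j$. Since the map $M\mapsto H_dMK_d^t$ is linear and $A_i=H_d\tilde A_iK_d^t$, $B_j=H_d\tilde B_jK_d^t$, this gives $\tilde F=H_dFK_d^t=\sum_{i=1}^qA_i+\sum_{j=1}^pB_j$. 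Hence the spectral radius of $\sum_{i=1}^qA_i+\sum_{j=1}^pB_j$ coincides with that of $\tilde\xi$, and the equivalence (ii)$\Leftrightarrow$(iii) of Lemma \ref{Lemma 3.3.1} proves the proposition.

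I do not expect a genuine obstacle: the argument is essentially a relabelling of matrices together with the observation that $M\mapsto H_dMK_d^t$ intertwines the $\mathrm{vec}$- and $\mathrm{vech}$-representations of $\xi$. The only thing requiring a little care is the bookkeeping, namely checking that the double sums over $(i,k)$ and $(j,r)$ in \eqref{3.6} are correctly absorbed into the single index $\ell$ used in Lemma \ref{Lemma 3.3.1}, and recalling that positive definiteness of $C$ — needed to invoke that lemma — is part of the definition of a standard GARCH process.
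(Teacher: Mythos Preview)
Your proposal is correct and essentially identical to the paper's own proof: the paper likewise defines $\xi(M)=\sum_{i,k}\bar{A}_{i,k}M\bar{A}_{i,k}^t+\sum_{j,r}\bar{B}_{j,r}M\bar{B}_{j,r}^t$, computes $F=\sum_i\tilde A_i+\sum_j\tilde B_j$ and $\tilde F=H_dFK_d^t=\sum_iA_i+\sum_jB_j$, and then invokes Lemma~\ref{Lemma 3.3.1}. Your explicit relabelling into a single family $(F_\ell)$ is just a notational unpacking of the same step.
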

\begin{proof}
 Define \(\xi: M_d(\mathbb{R})\to M_d(\mathbb{R})\) by 
 \(\xi(M)=\sum\limits_{i=1}^q\sum\limits_{k=1}^{l_i}\bar{A}_{i,k}M\bar{A}_{i,k}^t+\sum\limits_{j=1}^p\sum\limits_{r=1}^{s_j}\bar{B}_{j,r}M\bar{B}_{j,r}^t.\)
 Then the transformation matrix of \(\xi\) is 
 \[F = \sum\limits_{i=1}^q\sum\limits_{k=1}^{l_i}\bar{A}_{i,k}\otimes\bar{A}_{i,k} + 
  \sum\limits_{j=1}^p\sum\limits_{r=1}^{s_j}\bar{B}_{j,r}\otimes\bar{B}_{j,r} = \sum\limits_{i=1}^q\tilde{A}_i + \sum\limits_{j=1}^p\tilde{B}_j.\]
 Note that the transformation matrix of \(\tilde{\xi}\) (restriction of \(\xi\) to the linear subspace \(\mathbb{S}_d\)) is 
 \(H_dFK_d^t=\sum_{i=1}^qA_i + \sum_{j=1}^pB_j\). Due to Lemma \ref{Lemma 3.3.1} the spectral radius of \(\sum_{i=1}^qA_i+\sum_{j=1}^pB_j\) is less 
 than \(1\) if and only if there is \(\Sigma\in\mathbb{S}_d^{++}\) such that \eqref{eq:BEKKfixedpoint} holds.
\end{proof}
\begin{bemerkung}
 By a simple transposition argument one can equivalently state that the spectral radius of \(\sum_{i=1}^qA_i+\sum_{j=1}^pB_j\) is less than \(1\) if and only if 
 there exists \(\Sigma\in\mathbb{S}_d^{++}\) such that
 \[\Sigma = C + \sum\limits_{i=1}^q\sum\limits_{k=1}^{l_i}\bar{A}_{i,k}^t\Sigma\bar{A}_{i,k} + 
  \sum\limits_{j=1}^p\sum\limits_{r=1}^{s_j}\bar{B}_{j,r}^t\Sigma\bar{B}_{j,r}\]
 which we are going to use in the upcoming proof of Theorem \ref{Satz 3.3.8}.
\label{Bemerkung 3.3.3}
\end{bemerkung}
In the following we consider the block matrix \(B\) defined as in Section \ref{sec:mgarch}.
\begin{proposition}
 \begin{enumerate}
  \item
   If the spectral radius of the matrix \(\sum_{j=1}^pB_j\) is less than \(1\), then the one of \(B\) is also less than \(1\).
  \item
   If the spectral radius of the matrix \(\sum_{i=1}^qA_i+\sum_{j=1}^pB_j\) is less than \(1\), then the one of \(\sum_{j=1}^pB_j\) is also less than \(1\).
 \end{enumerate}
\label{Proposition 3.3.4}
\end{proposition}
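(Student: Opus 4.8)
The key realisation will be that this is \emph{not} a statement about abstract matrices but relies essentially on the positivity built into the BEKK (and vec) parametrisation, so the plan is to run everything through the Loewner order. For $1\le j\le p$ let $\widehat B_j$ be the linear self-map of $\mathbb{S}_d$ given by $\widehat B_j(M)=\sum_{r=1}^{s_j}\bar B_{j,r}M\bar B_{j,r}^{t}$; under the identification $\mathbb{S}_d\cong\mathbb{R}^{d(d+1)/2}$ via $\mathrm{vech}$ its matrix is exactly $B_j$ (because $\mathrm{vech}(\widehat B_j(M))=H_d\tilde B_jK_d^{t}\,\mathrm{vech}(M)=B_j\,\mathrm{vech}(M)$), so the spectral radius of $\sum_{j=1}^pB_j$ is the spectral radius of $\sum_j\widehat B_j$, and this map leaves the proper cone $\mathbb{S}_d^{+}$ invariant since congruence preserves positive semi-definiteness. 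Likewise $\widehat A_i(M)=\sum_k\bar A_{i,k}M\bar A_{i,k}^{t}$ has matrix $A_i$ and leaves $\mathbb{S}_d^{+}$ invariant, and the block matrix $B$ is the matrix (in $\mathrm{vech}$ coordinates) of the map $(M_1,\dots,M_p)\mapsto\big(\sum_j\widehat B_j(M_j),M_1,\dots,M_{p-1}\big)$ on $(\mathbb{S}_d)^{p}$, which leaves the proper cone $\mathcal{K}:=(\mathbb{S}_d^{+})^{p}$ invariant, with interior $(\mathbb{S}_d^{++})^{p}$. The single extra ingredient I use is the standard \emph{cone criterion}: a linear map $T$ leaving a proper cone $\mathcal{K}\subseteq\mathbb{R}^{N}$ invariant has spectral radius $<1$ as soon as there is a $v\in\mathrm{int}(\mathcal{K})$ with $v-Tv\in\mathrm{int}(\mathcal{K})$ (pick $\varepsilon>0$ with $Tv\le(1-\varepsilon)v$ in the cone order, iterate monotonicity of $T$ to get $0\le T^{n}v\le(1-\varepsilon)^{n}v$, then use boundedness of order intervals of a proper cone together with $v\in\mathrm{int}(\mathcal{K})$ to obtain $\|T^{n}\|\to0$; alternatively invoke Perron--Frobenius for cones). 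All matrix inequalities below are Loewner inequalities.

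For (ii) I would not reprove anything: if the spectral radius of $\sum_iA_i+\sum_jB_j$ is $<1$, then Proposition~\ref{Proposition 3.3.2} supplies $\Sigma\in\mathbb{S}_d^{++}$ with $\Sigma=C+\widehat A(\Sigma)+\sum_j\widehat B_j(\Sigma)$, where $\widehat A:=\sum_i\widehat A_i$. Since $\widehat A(\Sigma)\ge0$, the matrix $C':=C+\widehat A(\Sigma)$ lies in $\mathbb{S}_d^{++}$ and $\Sigma=C'+\sum_j\widehat B_j(\Sigma)$. This is precisely condition (iii) of Lemma~\ref{Lemma 3.3.1} applied to the family $(\bar B_{j,r})_{j,r}$ with the positive definite constant $C'$, so the implication (iii)$\Rightarrow$(ii) of that lemma gives that the spectral radius of $\sum_{j=1}^pB_j$ (the matrix of $\sum_j\widehat B_j$) is $<1$.

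For (i), Lemma~\ref{Lemma 3.3.1} only gets me started: applying its implication (ii)$\Rightarrow$(iii) to the family $(\bar B_{j,r})_{j,r}$ with $C=I_d$, the hypothesis that the spectral radius of $\sum_jB_j$ is $<1$ yields $\Sigma\in\mathbb{S}_d^{++}$ with $\sum_{j=1}^p\widehat B_j(\Sigma)=\Sigma-I_d$. I then manufacture a strictly subinvariant interior vector for $B$: fix reals $0<c_1<c_2<\dots<c_p$ and set $v:=\big(\mathrm{vech}(c_1\Sigma),\dots,\mathrm{vech}(c_p\Sigma)\big)\in\mathrm{int}(\mathcal{K})$. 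Reading off the block structure of $B$, the vector $v-Bv$ has $k$-th block $(c_k-c_{k-1})\Sigma>0$ for $2\le k\le p$, and first block $c_1\Sigma-\sum_jc_j\widehat B_j(\Sigma)\ge c_1\Sigma-c_p\sum_j\widehat B_j(\Sigma)=(c_1-c_p)\Sigma+c_pI_d$ (using $c_j\le c_p$ and $\widehat B_j(\Sigma)\ge0$), which is positive definite whenever $c_pI_d>(c_p-c_1)\Sigma$, i.e.\ whenever $c_1/c_p$ is close enough to $1$ — for instance $c_k=1+k\delta$ with $\delta>0$ small. For such a choice $v-Bv\in\mathrm{int}(\mathcal{K})$, and the cone criterion applied to $T=B$ gives that the spectral radius of $B$ is $<1$.

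The only genuinely substantive point is the positivity observation of the first paragraph: for arbitrary matrices part (i) is false — already in dimension one, $B_1=1$, $B_2=-1$ give $B_1+B_2=0$ while the companion matrix $\left(\begin{smallmatrix}B_1&B_2\\1&0\end{smallmatrix}\right)$ has eigenvalues of modulus $1$ — so one cannot argue purely within linear algebra and must route (i) through cone monotonicity of the block map. Since $B$ is \emph{not} of the congruence-sum form $\sum F_iMF_i^{t}$, Lemma~\ref{Lemma 3.3.1} does not apply to it directly, which is exactly why the auxiliary cone criterion and the explicit vector $v$ are needed; the rest is routine, the mildest wrinkle being the quantitative choice of the constants $c_k$ in (i).
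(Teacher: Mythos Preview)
Your argument for (ii) is essentially the paper's: obtain $\Sigma\in\mathbb{S}_d^{++}$ from Proposition~\ref{Proposition 3.3.2}, absorb the $A$-part into a new positive definite constant $\tilde C=C+\sum_{i,k}\bar A_{i,k}\Sigma\bar A_{i,k}^t$, and read off the conclusion from Lemma~\ref{Lemma 3.3.1}/Proposition~\ref{Proposition 3.3.2}.

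For (i) your approach is correct but genuinely different from the paper's. The paper argues eigenvalue by eigenvalue: from $Bh=\lambda h$ and the companion structure one gets $\lambda^p h_p=\sum_j\lambda^{p-j}B_jh_p$, translates this back to matrices as $\lambda^p M=\sum_{j,r}\lambda^{p-j}\bar B_{j,r}M\bar B_{j,r}^t$, and bounds $|\lambda|$ using the norm $\|P\|_{\tilde\Sigma}=\sup_{x^*\tilde\Sigma x=1}|x^*Px|$ together with the fixed-point identity for $\tilde\Sigma$; the assumption $|\lambda|\ge 1$ then forces $|\lambda|^p<|\lambda|^{p-1}$. You instead exploit that the block map preserves the proper cone $(\mathbb{S}_d^{+})^p$ and produce an explicit interior Lyapunov vector $v=(c_1\Sigma,\dots,c_p\Sigma)$ with $v-Bv\in\mathrm{int}(\mathcal{K})$ for $c_1<\dots<c_p$ close enough together, concluding via the cone criterion. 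Your route is more conceptual and makes transparent why positivity (and not just the companion structure) drives the result, at the cost of importing a Perron--Frobenius-type fact; the paper's route is self-contained and slightly sharper in that it never leaves the norm $\|\cdot\|_{\tilde\Sigma}$ already used in Lemma~\ref{Lemma 3.3.1}. Your counterexample with $B_1=1$, $B_2=-1$ nicely underlines that some cone structure is indispensable in either approach.
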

\begin{proof}
 (i) Suppose that the spectral radius of \(\sum_{j=1}^pB_j\) is less than \(1\). Then there exists due to Lemma \ref{Lemma 3.3.1} a symmetric positive definite matrix
 \(\tilde{\Sigma}\in\mathbb{S}_d^{++}\) such that 
 \begin{equation}
  \tilde{\Sigma} = C + \sum\limits_{j=1}^p\sum\limits_{r=1}^{s_j}\bar{B}_{j,r}\tilde{\Sigma}\bar{B}_{j,r}^t.
 \label{3.10}
 \end{equation}
 Let \(\lambda\) be an eigenvalue of \(B\) associated with the eigenvector \(h=(h_1^t,\ldots,h_p^t)^t\in\big(\mathbb{R}^{d(d+1)/2}\big)^p.\) Then
 \(\lambda h_1 = \sum_{j=1}^pB_jh_j\text{ and }\lambda h_j = h_{j-1}\ \text{ for }2\leq j\leq p.\) Thus \(h_p\neq 0\) (otherwise \(h\) would be zero) and 
 \(\lambda^ph_p = \lambda(\lambda^{p-1}h_p) = \lambda h_1 = \sum_{j=1}^pB_jh_j = \sum_{j=1}^p\lambda^{p-j}B_jh_p.\) Let \(M\in\mathbb{S}_d\) such 
 that \(\mathrm{vech}(M)=h_p\). Then \(\lambda^pM = \sum_{j=1}^p\sum_{r=1}^{s_j}\lambda^{p-j}\bar{B}_{j,r}M\bar{B}_{j,r}^t.\) We define the norm 
 \(\left\|\cdot\right\|_{\tilde{\Sigma}}\) on \(M_d(\mathbb{C})\) as in the proof of Lemma \ref{Lemma 3.3.1} by 
 \(\left\|P\right\|_{\tilde{\Sigma}}:= \sup\limits_{x\in\mathbb{C}^d,\ x^*\tilde{\Sigma}\;x=1}\left|x^*Px\right|,\ P\in M_d(\mathbb{C}).\) Then, for all
 \(x\in\mathbb{C}^d\), 
 \begin{align}
  \left|\lambda\right|^p\cdot\left|x^*Mx\right| 
   &= \bigg|\sum\limits_{j=1}^p\sum\limits_{r=1}^{s_j}\lambda^{p-j}x^*\bar{B}_{j,r}M\bar{B}_{j,r}^tx\bigg|
    \leq\sum\limits_{j=1}^p\sum\limits_{r=1}^{s_j}\left|\lambda\right|^{p-j}\left|x^*\bar{B}_{j,r}M\bar{B}_{j,r}^tx\right| \nonumber \\
   &\leq\left\|M\right\|_{\tilde{\Sigma}}
    \sum\limits_{j=1}^p\sum\limits_{r=1}^{s_j}\left|\lambda\right|^{p-j}(x^*\bar{B}_{j,r}\tilde{\Sigma}\bar{B}_{j,r}^tx). \nonumber
 \end{align}
 If we assume that there is an eigenvalue \(\lambda\) of \(B\) with \(\left|\lambda\right|\geq 1\), then we obtain, taking the vector \(x\) such
 that \(x^*\tilde{\Sigma}x=1\) and \(\left|x^*Mx\right| = \left\|M\right\|_{\tilde{\Sigma}}\) and using (\ref{3.10}), that
 \begin{align}
  \left|\lambda\right|^p 
   &\leq\sum\limits_{j=1}^p\sum\limits_{r=1}^{s_j}\left|\lambda\right|^{p-j}(x^*\bar{B}_{j,r}\tilde{\Sigma}\bar{B}_{j,r}^tx)
    \leq\left|\lambda\right|^{p-1}\left[x^*\left(\sum\limits_{j=1}^p\sum\limits_{r=1}^{s_j}\bar{B}_{j,r}\tilde{\Sigma}\bar{B}_{j,r}^t\right)x\right] \nonumber \\
   &=\left|\lambda\right|^{p-1}\left[x^*\left(\tilde{\Sigma}-C\right)x\right] = \left|\lambda\right|^{p-1}(1-x^*Cx). \nonumber
 \end{align}
 Since \(C\) is symmetric positive definite, one has \(x^*Cx > 0.\) Hence, \(\left|\lambda\right|^p < \left|\lambda\right|^{p-1}\), i.e.\ \(\left|\lambda\right| < 1\) which 
 is a contradiction. Thus the spectral radius of \(B\) has to be less than \(1\).

 (ii) Suppose that the spectral radius of the matrix \(\sum_{i=1}^qA_i+\sum_{j=1}^pB_j\) is less than \(1\). Then, due to Proposition \ref{Proposition 3.3.2}, there
 exists \(\Sigma\in\mathbb{S}_d^{++}\) such that 
 \(\Sigma = C + \sum_{i=1}^q\sum_{k=1}^{l_i}\bar{A}_{i,k}\Sigma\bar{A}_{i,k}^t + \sum_{j=1}^p\sum_{r=1}^{s_j}\bar{B}_{j,r}\Sigma\bar{B}_{j,r}^t.\)
 We set \(\tilde{C} := C + \sum_{i=1}^q\sum_{k=1}^{l_i}\bar{A}_{i,k}\Sigma\bar{A}_{i,k}^t\). Now, \(\tilde{C}\) is symmetric positive definite and 
 \(\Sigma = \tilde{C} + \sum_{j=1}^p\sum_{r=1}^{s_j}\bar{B}_{j,r}\Sigma\bar{B}_{j,r}^t.\) Using again Proposition \ref{Proposition 3.3.2} we deduce that the
 spectral radius of \(\sum_{j=1}^pB_j\) is less than \(1\).
\end{proof}
\begin{bemerkung}
 The matrix \(\tilde{\Sigma}\) in (\ref{3.10}) is the limit of a Neumann series (cf.\ proof of Lemma \ref{Lemma 3.3.1}).Thus 
 \[\mathrm{vech}(\tilde{\Sigma}) = \bigg(I-\sum_{j=1}^pB_j\bigg)^{-1}\mathrm{vech}(C).\]
\label{Bemerkung 3.3.5}
\end{bemerkung}
\subsection{Verification of Assumption (A2)}
\label{Section 3.3.3}
We will suppose throughout that (H1) holds.
\begin{proposition}
 Suppose that (H2) holds. If the spectral radius of the matrix \(\sum\limits_{j=1}^pB_j\) is less than \(1\), then (A2) holds.
\label{Proposition 3.3.6}
\end{proposition}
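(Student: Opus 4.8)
The plan is to run the deterministic recursion driven by the constant input $a=0$, which lies in $\mathrm{int}(E)$ by (H2). The first observation is that feeding the zero innovation kills the quadratic terms: since $X_n=\Sigma_n^{1/2}\epsilon_n$ the map $f_{Y_{n-1}}(\cdot)$ is linear with $f_{Y_{n-1}}(0)=0$, so $F(Y_{n-1},0)=\varphi(Y_{n-1},0)$ produces a newly generated $X$-component equal to zero. Iterating $Y_t=F(Y_{t-1},0)$ from an arbitrary $Y_0=z\in V\cap U$, the last $q$ blocks of $Y_t$ (which carry $X_t,\ldots,X_{t-q+1}$) therefore consist entirely of zeros as soon as $t\geq q$; consequently for every $n\geq q+1$ all of $X_{n-1},\ldots,X_{n-q}$ are zero, the terms $\mathrm{vech}(X_{n-i}X_{n-i}^t)$ drop out of the update of $\mathrm{vech}(\Sigma_n)$, and the whole recursion reduces to the affine one $Y_n=\mathscr{C}+\tilde{B}\,Y_{n-1}$ with $\mathscr{C}$ and $\tilde{B}$ as in Section \ref{sec:mgarch}.

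Next I would exploit the spectral radius hypothesis. By Proposition \ref{Proposition 3.3.4}(i) the spectral radius of the companion matrix $B$ is less than $1$, and since $\tilde{B}$ is block diagonal with blocks $B$ and $0$, the spectral radius of $\tilde{B}$ is less than $1$ as well. Hence $I-\tilde{B}$ is invertible, equation \eqref{eq:fixedpoint} has the unique solution $T=(I-\tilde{B})^{-1}\mathscr{C}$, and iterating the affine recursion from $Y_q$ gives $Y_{q+m}=\sum_{k=0}^{m-1}\tilde{B}^{k}\mathscr{C}+\tilde{B}^{m}Y_q$, which converges to $(I-\tilde{B})^{-1}\mathscr{C}=T$ as $m\to\infty$ because $\tilde{B}^{m}\to 0$. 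The $z$-dependent summand $\tilde{B}^{m}Y_q$ disappears in the limit, so this convergence holds for every $z\in V\cap U$, which is exactly what (A2) demands.

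It then remains to verify that the attracting point $T$ indeed lies in $V\cap U$. Reading \eqref{eq:fixedpoint} block by block, the $q$ trailing blocks of $T$ are the zero vector of $\mathbb{R}^d$ while the $p$ leading blocks all equal $\mathrm{vech}(\tilde{\Sigma})$ with $\mathrm{vech}(\tilde{\Sigma})=(I-\sum_{j=1}^{p}B_j)^{-1}\mathrm{vech}(C)$ (cf.\ Remark \ref{Bemerkung 3.3.5}). Applying Lemma \ref{Lemma 3.3.1} to the family $(\bar{B}_{j,r})$ gives $\tilde{\Sigma}\in\mathbb{S}_d^{++}$, so $\mathrm{vech}(\tilde{\Sigma})\in\mathrm{vech}(\mathbb{S}_d^{++})$ and hence $T\in U$; membership $T\in V$ is immediate.

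The only genuinely delicate point is the index bookkeeping in the first step --- pinning down that the $X$-blocks are flushed out after exactly $q$ applications of $F(\cdot,0)$ and that the $\mathrm{vech}(\Sigma)$-update is affine from index $q+1$ onwards. Everything past that is the routine convergence of an affine iteration whose linear part is a contraction, which here reduces to the already-established Proposition \ref{Proposition 3.3.4}(i) together with the block-triangular structure of $\tilde{B}$.
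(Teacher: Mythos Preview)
Your proof is correct and follows essentially the same route as the paper: choose $a=0$ via (H2), observe that the $X$-blocks vanish so that for $n>q$ the recursion collapses to the affine iteration $Y_n=\mathscr{C}+\tilde{B}Y_{n-1}$, and invoke Proposition~\ref{Proposition 3.3.4}(i) to get $\rho(B)<1$ and hence convergence to the fixed point $T$. You spell out more detail than the paper does (the explicit iterated formula, the block structure of $\tilde{B}$, and the verification that $T\in U$ via Lemma~\ref{Lemma 3.3.1}), but the paper records exactly the same $T\in U$ argument immediately after its proof in \eqref{eq:explformT}.
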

\begin{proof}
 Let \(U\) be the open set in \(\left(\mathbb{R}^{d(d+1)/2}\right)^p\times\left(\mathbb{R}^d\right)^{q}\) defined as in Section \ref{sec:mgarchsemipol}.
 For arbitrary \(y\in U\) we define the sequence \((Y_n^y)_{n\in\mathbb{N}}\) by \(Y_0^y=y\text{ and } Y_n^y = F(Y_{n-1}^y,0),\ n\geq 1\).

 We denote by \(X_n^y\) and \(\mathrm{vech}(\Sigma_n^y)\) the associated values of \(X_n\) and \(\mathrm{vech}(\Sigma_n)\). Since, by definition,
 \(X_n = G(\Sigma_n)\epsilon_n = \Sigma_n^{1/2}\epsilon_n\), we obtain that \(X_n^y = 0\) for all \(n\geq 1\). Due to (\ref{3.8}), \(\Sigma_n^y\)
 can be written, for every \(n > q\), as 
 \[\mathrm{vech}(\Sigma_n^y) = \mathrm{vech}(C) + \sum\limits_{j=1}^p B_j\ \mathrm{vech}(\Sigma_{n-j}^y).\]
 Thus, for all \(n > q\) and for all \(y\in U\),
 \begin{equation}
  Y_n^y = \mathscr{C} + \tilde{B} Y_{n-1}^y
 \label{3.11}
 \end{equation}
 where \(\tilde{B}\) is defined as in Section \ref{sec:mgarch}. Due to (\ref{3.11}), the assumption (A2) is satisfied with \(a=0\) if the spectral radius of \(B\) is less than
 \(1\). This is the case, since the spectral radius of \(\sum\limits_{j=1}^pB_j\) is supposed to be less than \(1\) (cf.\ Proposition \ref{Proposition 3.3.4} (i)).                       
\end{proof}
Hence, for all \(y\in U\), the sequence \((Y_n^y)_{n\in\mathbb{N}}\) converges to the unique fixed point \(T\) defined by
\begin{equation}
 T = \mathscr{C} + \tilde{B} T.  
\label{3.12}
\end{equation}
Using Lemma \ref{Lemma 3.3.1} and the fact that the spectral radius of \(\sum_{j=1}^pB_j\) is assumed to be less than \(1\), there is 
\(\tilde{\Sigma}\in\mathbb{S}_d^{++}\) (cf.\ (\ref{3.10})) such that
\[\tilde{\Sigma} = C + \sum\limits_{j=1}^p\sum\limits_{r=1}^{s_j}\bar{B}_{j,r}\tilde{\Sigma}\bar{B}_{j,r}^t.\]
It is then easy to see that \(T\) can be written as 
\begin{equation}
 T = \bigg(\underbrace{\mathrm{vech}(\tilde{\Sigma})^t,\ldots,\mathrm{vech}(\tilde{\Sigma})^t}_{p},\underbrace{0,\ldots,0}_{qd}\bigg)^t\in U.
\label{eq:explformT}
\end{equation}
We set \(\mathscr{C}_1 := \left(\mathrm{vech}(C)^t,0,\ldots,0\right)^t\in\left(\mathbb{R}^{d(d+1)/2}\right)^p\). Then (\ref{3.12}) yields
\begin{equation}
 \sigma = \mathscr{C}_1 + B\sigma
\label{3.13}  
\end{equation}
where \(\sigma := \left(\mathrm{vech}(\tilde{\Sigma})^t,\ldots,\mathrm{vech}(\tilde{\Sigma})^t\right)^t\in\left(\mathbb{R}^{d(d+1)/2}\right)^p\).
\subsection{Verification of Assumption (A3)}
If (H2) is satisfied, then \(E\) contains an open set of \(\mathbb{R}^d\) and we obtain for the algebraic variety of states with the same arguments as in Section
\ref{Section 2.3.2} that
\begin{align}
 W &= \raisebox{0.15cm}{\scriptsize{$Z$}}\overline{S_T} = \raisebox{0.28cm}{\scriptsize{$Z$}}\overline
  {\bigcup\limits_{n\in\left.\mathbb{N}\right.^*}\left.F\right.^n\left(T,\left.E\right.^n\right)}\nonumber=\raisebox{0.28cm}{\scriptsize{$Z$}}\overline
  {\bigcup\limits_{n\in\left.\mathbb{N}\right.^*}\left.F\right.^n\left(T,\left(\mathbb{R}^d\right)^n\right)}\nonumber=\raisebox{0.28cm}{\scriptsize{$Z$}}\overline
  {\bigcup\limits_{n\in\left.\mathbb{N}\right.^*}\varphi^n\left(T,\left(\mathbb{R}^d\right)^n\right)} \nonumber.
\end{align}
Let \(n\in\mathbb{N}^*\) and consider \(y(n)\in\varphi^n\left(T,(\mathbb{R}^d)^n\right)\) given by \(y(n) = \varphi^n(T,x_1,\ldots,x_n)\) where 
\(x_1,\ldots,x_n\in\mathbb{R}^d\). We define \(x(n)\) and \(\sigma(n)\) by the coordinates of \(y(n)\) as follows:
\begin{align}
 x(n) &= \left(\mathrm{vech}(x_nx_n^t)^t,\ldots,\mathrm{vech}(x_{n-q+1}x_{n-q+1}^t)^t\right)^t \nonumber \\
  \text{and}\qquad\sigma(n) &= \left(\mathrm{vech}(\sigma_n)^t,\ldots,\mathrm{vech}(\sigma_{n-p+1})^t\right)^t. \nonumber
\end{align}
That is, \(y(n) = \big(\sigma(n)^t,x_n^t,\ldots,x_{n-q+1}^t\big)^t\). Then
\begin{equation}
 \sigma(n+1) = \mathscr{C}_1 + Ax(n) + B\sigma(n)
\label{3.14}
\end{equation}
where \(\mathscr{C}_1\) and \(B\) are defined in Section \ref{Section 3.3.3} and \(A\) is given by
\[A := \begin{pmatrix} A_1 & A_2 & \ldots & A_q \\ 0   & 0   & \ldots & 0  \\ \vdots & \vdots & \ddots & \vdots \\ 0   & 0   & \ldots & 0  \\ \end{pmatrix}
 \in M_{p\frac{d(d+1)}{2}\times q\frac{d(d+1)}{2}}(\mathbb{R}).\]
Iterating (\ref{3.14}) and due to \(\sigma(0)=\sigma=\left(\mathrm{vech}(\tilde{\Sigma})^t,\ldots,\mathrm{vech}(\tilde{\Sigma})^t\right)^t\) 
(since \(y(0) = T\)) we deduce
\begin{align}
 \sigma(n) &= \underbrace{\sum\limits_{i=0}^{n-1}B^i\mathscr{C}_1 + B^n\sigma}_{\stackrel{\eqref{3.13}}{=}\sigma} + 
  \sum\limits_{i=1}^{n-1}B^{i-1}Ax(n-i)\nonumber= \sigma + \sum\limits_{i=1}^{n-1}B^{i-1}Ax(n-i). \nonumber
\end{align}
This yields \(\mathrm{vech}(\sigma_n) = \mathrm{vech}(\tilde{\Sigma}) + \sum_{i=1}^{n-1}K_i\,\mathrm{vech}(x_{n-i}x_{n-i}^t)\) where, for all
\(i\in\mathbb{N}^*\), \(K_i\) is defined by \(K_i := \left[B^{i-1}A\right]_{1,1} + \left[B^{i-2}A\right]_{1,2} + \ldots + \left[B^{i-q}A\right]_{1,q}\) with the
convention \(B^0:=I\), \(B^i:=0\) if \(i<0\) and \(\left[M\right]_{1,j}\) is the \(d(d+1)/2\times d(d+1)/2\) block from lines \(1\) to \(d(d+1)/2\) and from 
columns \((j-1)d(d+1)/2+1\) to \(jd(d+1)/2\) of \(M\).

Thus, \(W\) is the Zariski closure of the orbit
\begin{align}
 S_T &= \bigcup\limits_{n\in\left.\mathbb{N}\right.^*}\left\{y(n):\ x_1,\ldots,x_n\in\mathbb{R}^d\right\} \nonumber \\
  &=\bigcup\limits_{n\in\left.\mathbb{N}\right.^*}\Bigg\{T + \Bigg(\sum\limits_{i=1}^{n-1}\left(K_i\,\mathrm{vech}(x_{n-i}x_{n-i}^t)\right)^t,
   \ldots,\sum\limits_{i=1}^{n-p}\left(K_i\,\mathrm{vech}(x_{n-p+1-i}x_{n-p+1-i}^t)\right)^t, \nonumber \\
  &\qquad \quad\quad x_n^t,x_{n-1}^t,\ldots,x_{n-q+1}^t\Bigg)^t:\ x_1,\ldots,x_n\in\mathbb{R}^d\Bigg\} \nonumber
\end{align}
where \(x_{1-q} = x_{2-q} = \ldots = x_0 = 0\) (since \(y(0) = T = (\sigma^t,0,\ldots,0)^t\)). \\
In particular, this implies \(\varphi(W\cap U\times\mathbb{R}^d)\subseteq W\cap U\) (cf. (F2)), because $\varphi(U\times \mathbb{R}^d)\subseteq U$ and 
\(\varphi(S_T\times\mathbb{R}^d)\subseteq S_T\) yields 
\[\varphi(W\cap U\times\mathbb{R}^d) \subseteq \varphi\left(\raisebox{0.15cm}{\scriptsize{$Z$}}\overline{S_T}\times\mathbb{R}^d\right)
  \subseteq\raisebox{0.23cm}{\scriptsize{$Z$}}\overline{\varphi\left(\raisebox{0.15cm}{\scriptsize{$Z$}}\overline{S_T}\times\mathbb{R}^d\right)}
  =\raisebox{0.18cm}{\scriptsize{$Z$}}\overline{\varphi(S_T\times\mathbb{R}^d)}\subseteq\raisebox{0.15cm}{\scriptsize{$Z$}}\overline{S_T}=W,\]
since \(\varphi\) is a regular map and thus continuous with respect to the Zariski topology.
\begin{satz}
 Suppose that (H2) holds and that there is a strictly stationary solution \((X_n)_{n\in\mathbb{Z}}\) for the standard  GARCH\((p,q)\) model, then the process
 \((Y_n)_{n\in\mathbb{Z}}\) takes its values in the algebraic variety of states \(W\cap U\). Moreover, one has
 \[\mathrm{vech}(\Sigma_n) = \bigg(I-\sum\limits_{j=1}^pB_j\bigg)^{-1}\mathrm{vech}(C) + \sum\limits_{i=1}^{\infty}K_i\,\mathrm{vech}(X_{n-i}X_{n-i}^t).\]
\label{Satz 3.3.7}
\end{satz}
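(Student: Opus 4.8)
The plan is to argue in three stages. First, I extract from the mere existence of a strictly stationary solution that the spectral radius of $\sum_{j=1}^{p}B_j$ is less than $1$; this makes $T$, $\tilde\Sigma$ and $W$ well-defined and puts us fully in the situation of Section~\ref{Section 3.3.3} and of the orbit computation carried out just before the present statement, so that the coefficients $K_i$, the fixed point~\eqref{3.12}--\eqref{eq:explformT}, and the identity $\mathrm{vech}(\tilde\Sigma)=(I-\sum_{j=1}^{p}B_j)^{-1}\mathrm{vech}(C)$ of Remark~\ref{Bemerkung 3.3.5} are all available. Second, I iterate the stationary recursion to obtain the infinite-order representation of $\mathrm{vech}(\Sigma_n)$. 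Third, I approximate $Y_n$ by orbit points lying in $W$ and conclude $Y_n\in W\cap U$ using that $W$, being Zariski-closed, is closed for the Euclidean topology. Throughout put $\hat\sigma(n):=(\mathrm{vech}(\Sigma_n)^t,\ldots,\mathrm{vech}(\Sigma_{n-p+1})^t)^t$ and $\hat x(n):=(\mathrm{vech}(X_nX_n^t)^t,\ldots,\mathrm{vech}(X_{n-q+1}X_{n-q+1}^t)^t)^t$, so that the vech form~\eqref{3.8} of the model, read for the stationary solution, is the analogue of~\eqref{3.14}, namely $\hat\sigma(n)=\mathscr{C}_1+A\hat x(n-1)+B\hat\sigma(n-1)$; all these objects lie in the cone $\mathcal{K}$ of stacks of $\mathrm{vech}$'s of positive semi-definite matrices, on which $B$ (hence every $B^i$) acts monotonically and into which $A\hat x(n-1)$ falls because $\sum_{i,k}\bar A_{i,k}X_{n-i}X_{n-i}^t\bar A_{i,k}^t\ge0$.

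\emph{Stage 1.} Dropping the non-negative term $A\hat x(\cdot)$ at every step of the iteration of $\hat\sigma(n)=\mathscr{C}_1+A\hat x(n-1)+B\hat\sigma(n-1)$ gives $\hat\sigma(n)\ge\sum_{i=0}^{N-1}B^i\mathscr{C}_1$ in the order of $\mathcal{K}$, for every $N\in\mathbb{N}^*$. In the first block this reads $\Sigma_n\ge\sum_{i=0}^{N-1}C^{(i)}$ in the Loewner order, where $C^{(0)}=C$ and $C^{(i)}=\sum_{j,r}\bar B_{j,r}C^{(i-j)}\bar B_{j,r}^t$ (with $C^{(i)}:=0$ for $i<0$). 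Since $\Sigma_n$ is a.s.\ finite, the increasing sequence of positive semi-definite matrices $\sum_{i=0}^{N-1}C^{(i)}$ converges; its limit $S:=\sum_{i\ge0}C^{(i)}$ satisfies $S\ge C>0$, and re-indexing gives $S=C+\sum_{j,r}\bar B_{j,r}S\bar B_{j,r}^t$. By Lemma~\ref{Lemma 3.3.1} the spectral radius of $\sum_{j=1}^{p}B_j$ is then less than $1$ (and $S=\tilde\Sigma$), hence so is the spectral radius of $B$ by Proposition~\ref{Proposition 3.3.4}(i); this is what the plan needed.

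\emph{Stage 2.} Iterating the stationary recursion fully,
\[\hat\sigma(n)=\sum_{i=0}^{N-1}B^i\mathscr{C}_1+\sum_{i=1}^{N}B^{i-1}A\hat x(n-i)+B^N\hat\sigma(n-N).\]
As $N\to\infty$, the first sum converges (geometrically, since the spectral radius of $B$ is $<1$) to $(I-B)^{-1}\mathscr{C}_1=\sigma$, cf.~\eqref{3.13}, and $B^N\hat\sigma(n-N)\to0$ in probability because $\|B^N\|\to0$ while $\hat\sigma(n-N)\eqd\hat\sigma(0)$ is tight by stationarity; hence the middle sum converges in probability to $\hat\sigma(n)-\sigma$. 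But its partial sums are increasing in $\mathcal{K}$ (each summand lies in $\mathcal{K}$, being $B^{i-1}$ applied to an element of $\mathcal{K}$) and are bounded above by $\hat\sigma(n)$, so they converge a.s., necessarily to the finite limit $\hat\sigma(n)-\sigma$. Taking the first block and grouping the summands multiplying $\mathrm{vech}(X_{n-m}X_{n-m}^t)$, i.e.\ those with $i+j-1=m$, reproduces exactly the coefficient $K_m=[B^{m-1}A]_{1,1}+\cdots+[B^{m-q}A]_{1,q}$; therefore $\mathrm{vech}(\Sigma_n)=\mathrm{vech}(\tilde\Sigma)+\sum_{i\ge1}K_i\,\mathrm{vech}(X_{n-i}X_{n-i}^t)$ with an a.s.-convergent series, and by stationarity the same identity holds at every time index. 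Together with Remark~\ref{Bemerkung 3.3.5} this is the displayed formula.

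\emph{Stage 3.} Since $\Sigma_m\ge C>0$ for every $m$, $Y_n\in U$ surely. Put $Y_n^{(N)}:=\varphi^N(T,X_{n-N+1},\ldots,X_n)$. From the construction of $W$ (using (H2) to replace $E$ by $\mathbb{R}^d$, cf.\ Lemma~\ref{Lemma 2.3.7} and Proposition~\ref{Proposition 2.3.9}) one has $\varphi^N(T,(\mathbb{R}^d)^N)\subseteq W_N\subseteq W$, so $Y_n^{(N)}\in W$ for every realisation. By the explicit orbit formula obtained just before this theorem, the $k$-th variance block of $Y_n^{(N)}$ equals $\mathrm{vech}(\tilde\Sigma)+\sum_{i=1}^{N-k}K_i\,\mathrm{vech}(X_{n-k+1-i}X_{n-k+1-i}^t)$, i.e.\ a partial sum of the series representing the block $\mathrm{vech}(\Sigma_{n-k+1})$ of $Y_n$ from Stage 2, while the $X$-blocks agree once $N\ge q$; hence $Y_n^{(N)}\to Y_n$ a.s.\ by the a.s.\ convergence of that series. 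As $W$ is Zariski-closed, it is closed for the Euclidean topology, so $Y_n\in W$ a.s., and intersecting over the countably many $n\in\mathbb{Z}$ shows that $(Y_n)_{n\in\mathbb{Z}}$ takes its values in $W\cap U$ --- which is precisely (A3) in the GARCH setting. The main difficulty throughout is the absence of moment assumptions: one may not take expectations, so the convergence of the infinite expansion and the vanishing of the boundary term $B^N\hat\sigma(n-N)$ must be secured by combining the monotone (Loewner/cone) structure of the recursion with the tightness coming from stationarity, which is exactly what upgrades convergence in probability to almost-sure convergence to a finite limit; the proof that the spectral radius of $\sum_{j=1}^{p}B_j$ is $<1$ in Stage 1 rests on the same Loewner-domination device.
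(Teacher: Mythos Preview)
Your proof is correct and follows essentially the same route as the paper's: first extract $\rho(\sum_j B_j)<1$ from the Loewner-bounded partial sums $\sum_{i=0}^{N-1}B^i\mathscr{C}_1\le \hat\sigma(n)$, then iterate the recursion and combine the geometric decay of $B^N$ with stationarity (tightness) to kill the boundary term and obtain the a.s.\ convergent infinite expansion of $\mathrm{vech}(\Sigma_n)$. The one place where you are more explicit than the paper is Stage~3: the paper simply asserts that the infinite-series formula ``shows that $(Y_n)_{n\in\mathbb{Z}}$ takes its values in the variety $W$'', whereas you spell out the approximation $Y_n^{(N)}=\varphi^N(T,X_{n-N+1},\ldots,X_n)\in W$ and pass to the limit using Euclidean closedness of the Zariski-closed set $W$ --- this is a welcome clarification of a step the paper leaves to the reader.
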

\begin{proof}
 Let \((X_n)_{n\in\mathbb{Z}}\) be a strictly stationary solution of the standard GARCH\((p,q)\) model with conditional covariance matrices \(\Sigma_n\). We denote 
 by \(X(n)\) and \(\Sigma(n)\) the following random vectors:
 \begin{align}
  X(n) &= \left(\mathrm{vech}(X_nX_n^t)^t,\ldots,\mathrm{vech}(X_{n-q+1}X_{n-q+1}^t)^t\right)^t \nonumber \\
   \text{and}\qquad\Sigma(n) &= \left(\mathrm{vech}(\Sigma_n)^t,\ldots,\mathrm{vech}(\Sigma_{n-p+1})^t\right)^t. \nonumber
 \end{align}
 Since \(\Sigma(n) = \mathscr{C}_1 + AX(n-1) + B\Sigma(n-1)\) (cf.\ (\ref{3.14})), iterating yields 
 \begin{equation}
  \Sigma(n) = \sum\limits_{i=0}^{k-1}B^i\mathscr{C}_1 + B^k\Sigma(n-k) + \sum\limits_{i=1}^{k}B^{i-1}AX(n-i)      
 \label{3.15}
 \end{equation}
 for all \(k\in\mathbb{N}\).

 Now for any  \(M=\left(\mathrm{vech}(M_1)^t,\ldots,\mathrm{vech}(M_p)^t\right)^t\) and \(N=\left(\mathrm{vech}(N_1)^t,\ldots,\mathrm{vech}(N_p)^t\right)^t\)
 in \((\mathbb{R}^{d(d+1)/2})^p\), let us denote \(M\geq N\) if and only if \(M_1\geq N_1,\ldots,M_p\geq N_p\) (where, for all 
 \(M_i,N_i\in\mathbb{S}_d\), \(M_i\geq N_i\Leftrightarrow M_i-N_i\geq 0\Leftrightarrow M_i-N_i\) positive semi-definite). This defines a partial order on
 \((\mathbb{R}^{d(d+1)/2})^p\).

 Then (\ref{3.15}) yields \(\Sigma(n)\geq\sum_{i=0}^{k-1}B^i\mathscr{C}_1\). Since \(\Sigma(n)\) is finite the series \(\sum_{i=0}^{k-1}B^i\mathscr{C}_1\)
 converges as \(k\to\infty\) (see for instance \cite{WARPAR} for further details concerning partially ordered topological spaces; in particular the Corollary after Lemma 5
 proves that our series must converge). Setting \(\tilde{\sigma}:=\sum_{i=0}^\infty B^i\mathscr{C}_1\), it is easy to see that  
 \(\tilde{\sigma}=\mathscr{C}_1+B\tilde{\sigma}\). Using the definitions of \(B\) and \(\mathscr{C}_1\), we obtain that 
 \(\tilde{\sigma}=\left(\sigma_1^t,\sigma_1^t,\ldots,\sigma_1^t\right)^t\) for some \(\sigma_1\in\mathbb{R}^{d(d+1)/2}\) which fulfils 
 \(\sigma_1 = \mathrm{vech}(C) + \sum_{j=1}^pB_j\sigma_1\). One may then verify that \(\sigma_1 = \mathrm{vech}(\Sigma_1)\) for some
 \(\Sigma_1\in\mathbb{S}_d^{++}\) and hence that the spectral radius of \(\sum_{j=1}^pB_j\) is less than \(1\) (cf.\ Proposition \ref{Proposition 3.3.2}). Due to
 Proposition \ref{Proposition 3.3.4} (i) we obtain that the spectral radius of \(B\) is also less than \(1\). Thus \(\tilde{\sigma}=\sigma\).

 Next, since the spectral radius of \(B\) is less than \(1\), the sequence \((B^k)_{k\in\mathbb{N}}\) converges to zero as \(k\to\infty\). The random vectors 
 \((\Sigma(n-k))_{k\in\mathbb{N}}\) have a constant law because \((X_n)_{n\in\mathbb{Z}}\) is supposed to be a strictly stationary solution of the GARCH model. 
 Thus \(B^k\Sigma(n-k)\) converges to zero in probability when \(k\to\infty\).

 With an analog argument as for \(\sum_{i=0}^{k-1}B^i\mathscr{C}_1\) one can see that \(\sum_{i=1}^{k}B^{i-1}AX(n-i)\) converges almost surely as \(k\to\infty\).
 Hence, taking the limit of (\ref{3.15}) yields 
 \[\Sigma(n) = \sigma + \sum\limits_{i=1}^{\infty}B^{i-1}AX(n-i)\qquad\text{a.s.}\]
 Using the matrices \(K_i\), defined during the investigation of the variety of states \(W\), we obtain
 \[\mathrm{vech}(\Sigma_n) = \mathrm{vech}(\tilde{\Sigma}) + \sum\limits_{i=1}^{\infty}K_i\ \mathrm{vech}(X_{n-i}X_{n-i}^t)\qquad\text{a.s.}\]
 This shows that \((Y_n)_{n\in\mathbb{Z}}\) takes its values in the variety \(W\) and hence in \(W\cap U\). Note that the strictly stationary solution is causal.
 To finish the proof we refer to Remark \ref{Bemerkung 3.3.5} from which we obtain 
 \(\mathrm{vech}(\tilde{\Sigma}) = (I-\sum_{j=1}^pB_j)^{-1}\ \mathrm{vech}(C)\).
\end{proof}
\subsection[Foster - Lyapunov Condition]{Foster - Lyapunov Condition (\ref{Foster-Lyapounov})}
We now derive a function \(V\) satisfying the Foster-Lyapunov-condition  provided that the spectral radius of \(\sum_{i=1}^qA_i+\sum_{j=1}^pB_j\) is less than \(1\).
That is, we prove the following theorem:
\begin{satz}
 Suppose that the spectral radius of the matrix \(\sum_{i=1}^qA_i+\sum_{j=1}^pB_j\) is less than \(1\). Then there exist a function \(V: U\to [1,\infty)\) 
 and positive constants \(\alpha < 1,\ b < \infty\) as well as a Borel set \(K\) in $W\cap U$ such that the (\ref{Foster-Lyapounov}) - condition is satisfied, i.e. there are
 positive constants \(\alpha<1,\ b<\infty\) such that
 \begin{equation*}
  PV(x)\leq\alpha\cdot V(x) + b\cdot{1}_K(x)\, \forall\, x\in W\cap U.
 \end{equation*}
\label{Satz 3.3.8}
\end{satz}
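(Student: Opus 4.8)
The plan is to take for $V$ a function that is affine in the \emph{squared} state coordinates. Write a generic point of $U$ as $x=\big(\mathrm{vech}(s_1)^t,\dots,\mathrm{vech}(s_p)^t,\xi_1^t,\dots,\xi_q^t\big)^t$ with $s_1,\dots,s_p\in\mathbb{S}_d^{++}$ and $\xi_1,\dots,\xi_q\in\bbr^d$, and set
\[
 \mathbf{s}(x):=\big(\mathrm{vech}(s_1)^t,\dots,\mathrm{vech}(s_p)^t,\mathrm{vech}(\xi_1\xi_1^t)^t,\dots,\mathrm{vech}(\xi_q\xi_q^t)^t\big)^t\in\big(\bbr^{d(d+1)/2}\big)^{p+q}.
\]
I would look for $V(x)=1+\langle\mathbf{v},\mathbf{s}(x)\rangle$ with $\mathbf{v}$ in the interior of the dual $\mathcal{K}^*$ of the product cone $\mathcal{K}:=\mathrm{vech}(\mathbb{S}_d^+)^{p+q}$; since $\mathbf{s}(x)\in\mathcal{K}$ this forces $V\geq 1$, and $V$ is a polynomial, hence continuous, on all of $U$. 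The reason for squaring the $X$-coordinates is that $\bbE[\epsilon_1\epsilon_1^t]=I_d$ and $\Sigma_n$ is determined by $Y_{n-1}$, so $\bbE[X_nX_n^t\mid Y_{n-1}]=\Sigma_n^{1/2}I_d\Sigma_n^{1/2}=\Sigma_n$; combining this with the $\mathrm{vech}$-recursion \eqref{3.8} shows that $\mathbf{s}_n:=\mathbf{s}(Y_n)$ satisfies $\bbE[\mathbf{s}_n\mid Y_{n-1}]=\mathbf{c}+\mathscr{M}\,\mathbf{s}_{n-1}$, where $\mathbf{c}\in\mathcal{K}$ is built from $\mathrm{vech}(C)$ and $\mathscr{M}$ is a block companion matrix whose non-zero blocks are $I$, the $A_i$ and the $B_j$, and in which the row producing the new $\mathrm{vech}(\Sigma_n)$ and the row producing the new $\mathrm{vech}(X_nX_n^t)$ are \emph{identical}, both equal to $[B_1,\dots,B_p,A_1,\dots,A_q]$. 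Hence $PV(x)=1+\langle\mathbf{v},\mathbf{c}\rangle+\langle\mathscr{M}^t\mathbf{v},\mathbf{s}(x)\rangle$ (and $PV(x)<\infty$ since $\bbE[X_nX_n^t\mid Y_{n-1}]$ is finite), so everything reduces to producing $\mathbf{v}\in\mathrm{int}(\mathcal{K}^*)$ and $\alpha<1$ with $\mathscr{M}^t\mathbf{v}\preceq\alpha\mathbf{v}$ in $\mathcal{K}^*$.

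The cone input is routine: each $A_i$ and $B_j$ maps $\mathrm{vech}(\mathbb{S}_d^+)$ into itself (the BEKK structure gives $B_j\,\mathrm{vech}(M)=\mathrm{vech}(\sum_r\bar{B}_{j,r}M\bar{B}_{j,r}^t)$, and similarly for $A_i$), so $\mathscr{M}(\mathcal{K})\subseteq\mathcal{K}$, whence $\mathscr{M}^t$ preserves $\mathcal{K}^*$, which is again a proper cone with non-empty interior. The crucial --- and, I expect, hardest --- step is the spectral-radius implication
\[
 \rho\Big(\textstyle\sum_{i=1}^q A_i+\sum_{j=1}^p B_j\Big)<1\quad\Longrightarrow\quad\rho(\mathscr{M})<1 ,
\]
which I would prove by the eigenvalue-chasing argument already used for the pure-$B$ companion matrix in Proposition \ref{Proposition 3.3.4}(i). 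Let $\lambda$ be an eigenvalue of $\mathscr{M}$ with a (possibly complex) eigenvector $(w_1,\dots,w_p,w_1',\dots,w_q')$ and suppose $|\lambda|\geq 1$. The shift rows of $\mathscr{M}$ give $w_j=\lambda^{-(j-1)}w_1$ and $w_i'=\lambda^{-(i-1)}w_1'$, and the two identical update rows give $w_1'=w_1$, which must be non-zero; putting $M:=\mathrm{vech}^{-1}(w_1)\neq 0$ one obtains
\[
 \lambda M=\sum_{j=1}^p\sum_{r=1}^{s_j}\lambda^{-(j-1)}\bar{B}_{j,r}M\bar{B}_{j,r}^t+\sum_{i=1}^q\sum_{k=1}^{l_i}\lambda^{-(i-1)}\bar{A}_{i,k}M\bar{A}_{i,k}^t .
\]
Since $|\lambda^{-(j-1)}|\leq 1$, testing against the norm $\|\cdot\|_{\tilde{\Sigma}}$ from the proof of Lemma \ref{Lemma 3.3.1} --- with $\tilde{\Sigma}\in\mathbb{S}_d^{++}$ the fixed point $\tilde{\Sigma}=C+\sum_{i,k}\bar{A}_{i,k}\tilde{\Sigma}\bar{A}_{i,k}^t+\sum_{j,r}\bar{B}_{j,r}\tilde{\Sigma}\bar{B}_{j,r}^t$ supplied by Proposition \ref{Proposition 3.3.2} --- and choosing $x$ with $x^*\tilde{\Sigma}x=1$ and $|x^*Mx|=\|M\|_{\tilde{\Sigma}}$, one gets $|\lambda|\,\|M\|_{\tilde{\Sigma}}\leq\|M\|_{\tilde{\Sigma}}\,x^*(\tilde{\Sigma}-C)x=\|M\|_{\tilde{\Sigma}}(1-x^*Cx)$, hence $|\lambda|\leq 1-x^*Cx<1$, a contradiction. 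Therefore $\rho(\mathscr{M})<1$.

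Finally I would fix $\alpha\in(\rho(\mathscr{M}),1)$. As $\rho(\alpha^{-1}\mathscr{M}^t)<1$, the Neumann series $\mathbf{v}:=\sum_{n\geq 0}\alpha^{-n}(\mathscr{M}^t)^n\mathbf{h}$ converges for any $\mathbf{h}\in\mathrm{int}(\mathcal{K}^*)$ (e.g.\ $\mathbf{h}$ corresponding to $I_d$ in each of the $p+q$ slots), lies in $\mathrm{int}(\mathcal{K}^*)$ since its $n=0$ term does and all later terms lie in $\mathcal{K}^*$, and satisfies $\alpha\mathbf{v}-\mathscr{M}^t\mathbf{v}=\alpha\mathbf{h}\succeq 0$, i.e.\ $\mathscr{M}^t\mathbf{v}\preceq\alpha\mathbf{v}$. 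With $V(x)=1+\langle\mathbf{v},\mathbf{s}(x)\rangle\geq 1$ this yields, for \emph{every} $x\in U$,
\[
 PV(x)=1+\langle\mathbf{v},\mathbf{c}\rangle+\langle\mathscr{M}^t\mathbf{v},\mathbf{s}(x)\rangle\leq\alpha V(x)+b_0,\qquad b_0:=1-\alpha+\langle\mathbf{v},\mathbf{c}\rangle<\infty ,
\]
using $\langle\alpha\mathbf{v}-\mathscr{M}^t\mathbf{v},\mathbf{s}(x)\rangle\geq 0$. To reach the form of (\ref{Foster-Lyapounov}), pick $\beta\in(\alpha,1)$ and put $K:=\{x\in W\cap U:\ V(x)<b_0/(\beta-\alpha)\}$, an open and hence Borel subset of $W\cap U$: for $x\in(W\cap U)\setminus K$ one has $PV(x)\leq\alpha V(x)+b_0\leq\beta V(x)$, and for $x\in K$, $PV(x)\leq\beta V(x)+b_0$, so (\ref{Foster-Lyapounov}) holds with $\alpha$ replaced by $\beta$, with $b=b_0$ and with $C=K$. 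Since $\mathbf{v}\in\mathrm{int}(\mathcal{K}^*)$ forces $V(x)\to\infty$ as $\|x\|\to\infty$ in $W\cap U$, the set $K$ is bounded; that it may moreover be taken to be a small set --- as is needed when this is fed into Theorem \ref{Satz 2.3.5} in the proof of Theorem \ref{th:maintheo} --- follows from the $\psi$-irreducibility, aperiodicity and continuity properties of the chain established in and around Proposition \ref{Proposition 2.3.4}.
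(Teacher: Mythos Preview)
Your argument is correct and reaches the same class of Lyapunov functions as the paper --- an affine function of $\big(\mathrm{vech}(\Sigma_{n-j})\big)_{j}$ and $\big(\mathrm{vech}(X_{n-i}X_{n-i}^t)\big)_{i}$ --- but the route is genuinely different. The paper writes down the weights by hand: with $\Sigma$ the fixed point of Remark~\ref{Bemerkung 3.3.3} it sets $V_k=\tfrac{p-k+1}{p+q}C+\sum_{j\geq k}\bar B_j^t\Sigma\bar B_j$ and $V_{p+k}=\tfrac{q-k+1}{p+q}C+\sum_{i\geq k}\bar A_i^t\Sigma\bar A_i$, and then a direct computation shows that passing to $\bbE[\,\cdot\mid Y_{n-1}]$ produces the telescoping identities $\bar B_k^t(V_1+V_{p+1})\bar B_k+V_{k+1}=V_k-\tfrac{C}{p+q}$ (and their $\bar A$-analogues), from which the contraction constant $\alpha_0<1$ is read off via a Rayleigh-quotient bound. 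Your approach instead lifts the companion-matrix eigenvalue argument of Proposition~\ref{Proposition 3.3.4}(i) to the full $(p+q)$-block matrix $\mathscr M$ to get $\rho(\mathscr M)<1$, and then manufactures the weight vector $\mathbf v$ abstractly by a Neumann series in the dual cone. The paper's version is more explicit and self-contained (no cone duality is invoked, and the constants $\alpha$, $b$ are given concretely in terms of $C$ and $\Sigma$); yours is more conceptual, isolates the spectral statement $\rho(\mathscr M)<1$ as the essential point, and makes transparent why \emph{some} admissible weight vector must exist once that spectral bound holds. One small remark: smallness of $K$ is not part of the present theorem --- the statement only asks for a Borel set --- and indeed the paper defers the (non-trivial, since $K$ is not compact in $U$) smallness argument to the proof of Theorem~\ref{th:maintheo}.
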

\begin{proof}
 For notational convenience we suppose that in the BEKK representation (\ref{3.6}) \(l_i=s_j=1\) for all \(i=1,\ldots,q\) and \(j=1,\ldots,p\), since the extension 
 to general \(l_i,s_j\not=1\) is obvious and trivial. We set\enlargethispage{0.5cm} \(\bar{A}_i:=\bar{A}_{i,1}\) and \(\bar{B}_j:=\bar{B}_{j,1}\). That is, we have
 \begin{equation}
  \Sigma_n = C + \sum\limits_{i=1}^q\bar{A}_iX_{n-i}X_{n-i}^t\bar{A}_i^t + \sum\limits_{j=1}^p\bar{B}_j\Sigma_{n-j}\bar{B}_j^t.
 \label{3.16}
 \end{equation}
 If the spectral radius of the matrix \(\sum_{i=1}^qA_i+\sum_{j=1}^pB_j\) is less than \(1\), then, due to Proposition \ref{Proposition 3.3.2} and Remark
 \ref{Bemerkung 3.3.3}, there exists \(\Sigma\in\mathbb{S}_d^{++}\) such that 
 \(\Sigma = C + \sum_{i=1}^q\bar{A}_i^t\Sigma\bar{A}_i + \sum_{j=1}^p\bar{B}_j^t\Sigma\bar{B}_j.\)

 We define the map \(V: U\to\left[1,\infty\right)\) by 
 \[V(Y_n) := \mathrm{tr}(V_1\Sigma_n) + \ldots + \mathrm{tr}(V_p\Sigma_{n-p+1}) + X_n^tV_{p+1}X_n + \ldots + X_{n-q+1}^tV_{p+q}X_{n-q+1}+1\]
 where \(\mathrm{tr}(\hspace{.5mm}\cdot\hspace{.5mm})\) denotes the trace of a matrix and the \(d\times d\) matrices \((V_i)_{1\leq i\leq p+q}\) are given by
 \begin{align}
  V_k &:= \frac{p-k+1}{p+q}C + \sum\limits_{j=k}^p\bar{B}_j^t\Sigma\bar{B}_j,\quad 1\leq k\leq p \nonumber \\
   V_{p+k} &:= \frac{q-k+1}{p+q}C + \sum\limits_{i=k}^q\bar{A}_i^t\Sigma\bar{A}_i,\quad 1\leq k\leq q. \nonumber
 \end{align}
 Setting \(y=\left(\mathrm{vech}(\Sigma_{n-1})^t,\ldots,\mathrm{vech}(\Sigma_{n-p})^t,X_{n-1}^t,\ldots,X_{n-q}^t\right)^t\in U\) we obtain
 \begin{align}
  \mathbb{E}[V(Y_n)|Y_{n-1}=y]=&\mathbb{E}\left[\mathrm{tr}(V_1\Sigma_n) + X_n^tV_{p+1}X_n|Y_{n-1}=y\right] + \mathrm{tr}(V_2\Sigma_{n-1}) +
   \ldots + \mathrm{tr}(V_p\Sigma_{n-p+1}) \nonumber \\
   &+ X_{n-1}^tV_{p+2}X_{n-1} + \ldots + X_{n-q+1}^tV_{p+q}X_{n-q+1} + 1. \label{3.17}
 \end{align}
 Using (\ref{3.16}) for \(\Sigma_n\), we deduce for the first term at the right hand side
 \begin{align}
  \mathbb{E}&\left[\mathrm{tr}(V_1\Sigma_n) + X_n^tV_{p+1}X_n|Y_{n-1}=y\right] \nonumber \\
   &= \mathbb{E}\left[X_n^tV_{p+1}X_n|Y_{n-1}=y\right] + \mathrm{tr}(V_1C)+\mathrm{tr}(V_1\bar{A}_1X_{n-1}X_{n-1}^t\bar{A}_1^t) + \ldots 
    + \mathrm{tr}(V_1\bar{A}_qX_{n-q}X_{n-q}^t\bar{A}_q^t) \nonumber \\
   &\qquad+\mathrm{tr}(V_1\bar{B}_1\Sigma_{n-1}\bar{B}_1^t) + \ldots + \mathrm{tr}(V_1\bar{B}_p\Sigma_{n-p}\bar{B}_p^t) \nonumber \\
   &= \mathbb{E}\left[X_n^tV_{p+1}X_n|Y_{n-1}=y\right] + \mathrm{tr}(V_1C) + X_{n-1}^t\bar{A}_1^tV_1\bar{A}_1X_{n-1} + \ldots 
    + X_{n-q}^t\bar{A}_q^tV_1\bar{A}_qX_{n-q} \nonumber \\
   &\qquad+\mathrm{tr}(\bar{B}_1^tV_1\bar{B}_1\Sigma_{n-1}) + \ldots + \mathrm{tr}(\bar{B}_p^tV_1\bar{B}_p\Sigma_{n-p}). \nonumber
 \end{align}
 Since \(X_n=\Sigma_n^{1/2}\epsilon_n,\ \Sigma_n^{1/2}\Sigma_n^{1/2}=\Sigma_n\) and \(\mathbb{E}[\epsilon_n\epsilon_n^t]={I}_d\), we obtain 
 \begin{align}
  \mathbb{E}&\left[X_n^tV_{p+1}X_n|Y_{n-1}=y\right] = \mathbb{E}\left[\mathrm{tr}(X_n(V_{p+1}X_n)^t)|Y_{n-1}=y\right] \nonumber \\
   &=\mathrm{tr}\left(\mathbb{E}\left[X_nX_n^tV_{p+1}|Y_{n-1}=y\right]\right) = 
    \mathrm{tr}\left(\mathbb{E}\left[X_nX_n^t|Y_{n-1}=y\right]V_{p+1}\right) = \mathrm{tr}(\Sigma_nV_{p+1}) \nonumber \\
   &= \mathrm{tr}(V_{p+1}C) + \mathrm{tr}(V_{p+1}\bar{A}_1X_{n-1}X_{n-1}^t\bar{A}_1^t) + \ldots 
    + \mathrm{tr}(V_{p+1}\bar{A}_qX_{n-q}X_{n-q}^t\bar{A}_q^t) \nonumber \\
   &\qquad+\mathrm{tr}(V_{p+1}\bar{B}_1\Sigma_{n-1}\bar{B}_1^t) + \ldots + \mathrm{tr}(V_{p+1}\bar{B}_p\Sigma_{n-p}\bar{B}_p^t) \nonumber \\
   &= \mathrm{tr}(V_{p+1}C) + X_{n-1}^t\bar{A}_1^tV_{p+1}\bar{A}_1X_{n-1} + \ldots 
    + X_{n-q}^t\bar{A}_q^tV_{p+1}\bar{A}_qX_{n-q} \nonumber \\
   &\qquad+\mathrm{tr}(\bar{B}_1^tV_{p+1}\bar{B}_1\Sigma_{n-1}) + \ldots + \mathrm{tr}(\bar{B}_p^tV_{p+1}\bar{B}_p\Sigma_{n-p}). \nonumber
 \end{align}
 Hence, (\ref{3.17}) can be rewritten as 
 \begin{align}
  \mathbb{E}&[V(Y_n)|Y_{n-1}=y] \nonumber \\
   &= \mathrm{tr}\left[\left(\bar{B}_1^t(V_1+V_{p+1})\bar{B}_1+V_2\right)\Sigma_{n-1}\right] + \ldots 
    + \mathrm{tr}\left[\left(\bar{B}_{p-1}^t(V_1+V_{p+1})\bar{B}_{p-1}+V_p\right)\Sigma_{n-p+1}\right] \nonumber \\
   &\quad+ \mathrm{tr}\left[\bar{B}_p^t(V_1+V_{p+1})\bar{B}_p\Sigma_{n-p}\right]
    + X_{n-1}^t\left(\bar{A}_1^t(V_1+V_{p+1})\bar{A}_1+V_{p+2}\right)X_{n-1} + \ldots \nonumber \\
   &\quad+ X_{n-q+1}^t\left(\bar{A}_{q-1}^t(V_1+V_{p+1})\bar{A}_{q-1}+V_{p+q}\right)X_{n-q+1} 
    + X_{n-q}^t\bar{A}_q^t(V_1+V_{p+1})\bar{A}_qX_{n-q} \nonumber \\
   &\quad+ \mathrm{tr}[(V_1+V_{p+1})C] + 1. \nonumber
 \end{align}
 By definition of \(V_i\), we deduce
 \begin{align}
  &\bar{B}_k^t(V_1+V_{p+1})\bar{B}_k+V_{k+1} = V_k - \frac{C}{p+q},\quad 1\leq k\leq p-1 \nonumber \\
  &\bar{B}_p^t(V_1+V_{p+1})\bar{B}_p = V_p - \frac{C}{p+q} \nonumber \\
  &\bar{A}_k^t(V_1+V_{p+1})\bar{A}_k+V_{p+k+1} = V_{p+k} - \frac{C}{p+q},\quad 1\leq k\leq q-1 \nonumber \\
  &\bar{A}_q^t(V_1+V_{p+1})\bar{A}_q = V_{p+q} - \frac{C}{p+q}. \nonumber
 \end{align}
 Furthermore, \(V_k\) is symmetric positive definite for all \(k=1,\ldots,p+q\) which implies that \(V_k-\frac{C}{p+q}\) is symmetric positive semi-definite for 
 all \(k=1,\ldots,p+q\).

 Consider the non-negative constants \((\alpha_k)_{1\leq k\leq p+q}\) defined by
 \[\alpha_k := \max\left\{x^t\left(V_k - \frac{C}{p+q}\right)x:\ x\in\mathbb{R}^d,\,x^tV_kx=1\right\}.\]
 Since the maximum is calculated over the unit sphere with respect to the induced norm by \(V_k\) and since this unit sphere is compact, there exists
 \(x_k\in\mathbb{R}^d\) such that \(x_k^tV_kx_k=1\) and
 \begin{align}
  \alpha_k &= x_k^t\left(V_k - \frac{C}{p+q}\right)x_k= 1 - x_k^t\frac{C}{p+q}x_k. \nonumber
 \end{align}
 The matrices \(V_k\), \(k=1,\ldots,p+q\), and \(\frac{C}{p+q}\) are positive definite which yields \(0\leq\alpha_k<1\) for all \(k=1,\ldots,p+q\).

 Setting \(\alpha_0:=\max\left\{\alpha_k:\ k=1,\ldots,p+q\right\}\) we obtain \(0\leq\alpha_0<1\) and 
 \[ V_k - \frac{C}{p+q}\leq\alpha_0 V_k\, \forall \,k\in\left\{1,\ldots,p+q\right\}.\]
 Hence, for all \(M\in\mathbb{S}_d^{++}\) and all \(k\in\left\{1,\ldots,p+q\right\}\),
 \[\mathrm{tr}\left[\left(V_k - \frac{C}{p+q}\right)M\right]\leq\alpha_0\ \mathrm{tr}(V_kM).\]
 We deduce \(\mathbb{E}[V(Y_n)|Y_{n-1}=y]\leq\alpha_0V(y) + \mathrm{tr}(\Sigma C) + 1 - \alpha_0.\)

 If we choose \(\alpha:=(\alpha_0 + 1)/2\in\left[1/2,1\right)\) and \(b:=\mathrm{tr}(\Sigma C) + 1 - \alpha_0\in\left(0,\infty\right)\), then the 
 (\ref{Foster-Lyapounov}) - condition is satisfied with the  set \(K\) given by
 \[K:=\left\{x\in W\cap U:\ V(x)\leq\frac{b}{\alpha-\alpha_0}\right\}.\]
\end{proof}
\subsection[Proof of Main Theorem]{Proof of Theorem \ref{th:maintheo}}
Now we can prove our stationarity and ergodicity result for standard GARCH($p,q$) processes. The main remaining problem is that $K$ is not compact and, hence, it is
somewhat tricky to prove that it is small.
\begin{proof}[{Proof of Theorem \ref{th:maintheo} .}]
 (i) Since due to Proposition \ref{Proposition 3.3.4} (ii) the spectral radius of \(\sum_{j=1}^pB_j\) is also less than \(1\), Proposition \ref{Proposition 3.3.6} and 
 Theorem \ref{Satz 3.3.7} imply that (A2) and (A3) hold. Using then Proposition \ref{Proposition 2.3.4} we deduce that \((Y_n)_{n\in\left.\mathbb{N}\right.^*}\) is
 \(\psi\)-irreducible and aperiodic on the state space \((W\cap U,\mathcal{B}(W\cap U))\).

 Define $U_C= \left(\mathrm{vech}(\{x\in\mathbb{S}_d^{++}:\,x\geq C\})\right)^p\times (\mathbb{R}^d)^q$ which is a closed set and a proper subset of $U$. 
 Then we have by inspecting the iteration that $Y_k\in W\cap U_C \,\text{ for all } \, Y_0\in W\cap U \text{ and } k\in \mathbb{N},\, k\geq p.$ By the way, $T\in U_C$ 
 by \eqref{eq:explformT}. By condition (H3), Theorem \ref{Satz 3.3.8} ensures the existence of a function \(V\) which fulfils the (\ref{Foster-Lyapounov}) - condition 
 on the set \(K\). Now we show that $K$ is small.

 Det $K_1=K\cap U_C^c$ and $K_2=K\backslash K_1$. Using the self-duality of the cone of positive semi-definite matrices, it is straightforward to see that $V$ maps
 unbounded (with respect to norms on $(\bbr^{d(d+1)/2})^p\times (\bbr^d)^q$) subsets of $U$ to unbounded subsets of $\mathbb{R}^+$ and thus $K$ is a bounded
 subset of $U$. Inspecting the iteration defining the GARCH processes further we see that $Y_p$ is not only in $W\cap U_C$ when $Y_0\in K_1$, but necessarily also in a
 compact set $\tilde K\subseteq W\cap U_C$ conditional on $\|\epsilon_i\|\leq \eta$ for $i=1, 2, \ldots, p$ and a fixed $\eta>0$. W.l.o.g. one can assume 
 $\tilde K\supseteq K_2$. This implies $P^p(x,\tilde K)\geq P(\|\epsilon_1\|\leq \eta)^p=:\zeta>0$ for all $x\in K_1$ due to (H2). 

 Moreover, the Markov chain \((Y_n)_{n\in\left.\mathbb{N}\right.^*}\) has the Feller property, as an elementary and standard dominated convergence argument shows,
 and \(\mathrm{supp}\ \psi\) has non-empty interior (see Proposition \ref{Proposition 2.3.4}). Thus, \cite[Proposition 6.2.8]{Meynetal1993} shows that $\tilde K$ is 
 petite (see \cite{MeynTweedie1992}), i.e. there is a non-degenerate measure $\nu$ on $\mathcal{B}(W\cap U)$ and a probability measure $a$ on $\mathbb{N}^*$ 
 such that $\sum_{i=1}^\infty a(\{i\})P^i(x,B)\geq \nu(B)$ for all $x\in\tilde K$ and Borel sets $B\subseteq W\cap U$. Using Chapman-Kolmogorov this implies
 \(0.5\sum_{i=1}^\infty a(\{i\})P^i(x,B)+0.5\sum_{i=p+1}^\infty a(\{i-p\})P^i(x,B)\geq 0.5 \zeta \nu(B)\) for all $x\in \tilde K\cup K_1$ and Borel sets 
 $B\subseteq W\cap U$. Thus $\tilde K\cup K_1$ is petite. Since $K\subseteq \tilde K\cup K_1$, also $K$ is petite and thus small by \cite[Theorem 5.5.7]{Meynetal1993}.

 Applying Theorem \ref{Satz 2.3.5} and Theorem \ref{Satz 2.3.6} we obtain the claimed positive Harris recurrence, geometric ergodicity as well as geometric 
 $\beta$-mixing and $\pi(V)<\infty$ for the stationary distribution $\pi$. 

 Let $(X_n)_{n\in\mathbb{Z}}$ now be the unique stationary GARCH process. Then $\pi(V)<\infty$ implies
 \[\mathbb{E}[X_n^tV_{p+1}X_n]\leq\mathbb{E}[V(Y_n)]=\pi(V)<\infty\,\forall \,n\in\mathbb{Z}\]
 by definition of \(V\) (cf.\ proof of Theorem \ref{Satz 3.3.8}). This shows that \(X_n\in L^2\) for all \(n\in\mathbb{Z}\).

 Since \(\mathbb{E}[\Sigma_n]=\mathbb{E}[X_nX_n^t]\), we deduce \(\mathbb{E}[\Sigma_n]<\infty\). Using the diagonal dominance property of a positive
 semi-definite matrix ($|m_{ij}|\leq 0.5 (m_{ii}+m_{jj})$ for  $M=(m_{ij})_{1\leq i,j\leq d}\in \mathbb{S}_d^+$),  this implies \(\Sigma_n\in L^1\) for all
 \(n\in\mathbb{Z}\).

 (ii) We now assume that there is a weakly stationary solution for the standard  GARCH\((p,q)\) model. Then \(\Sigma:=\mathbb{E}[X_nX_n^t]\) is well-defined.
 Since \(\Sigma=\mathbb{E}[\Sigma_n]\), taking the expectation in (\ref{3.6}) on both sides yields
 \[\Sigma = C + \sum\limits_{i=1}^q\sum\limits_{k=1}^{l_i}\bar{A}_{i,k}\Sigma\bar{A}_{i,k}^t + 
  \sum\limits_{j=1}^p\sum\limits_{r=1}^{s_j}\bar{B}_{j,r}\Sigma\bar{B}_{j,r}^t.\]
 Due to Proposition \ref{Proposition 3.3.2} the spectral radius of the matrix \((\sum_{i=1}^qA_i+\sum_{j=1}^pB_j)\) has to be less than \(1\).
\end{proof}
Note that the proof shows that $\pi$ is concentrated on $W\cap U_C$, so in the stationary regime the GARCH covariance matrices are always bigger than or equal to $C$.
%
%
\appendix
\section{Algebraic Geometry}
In this appendix we summarise  the necessary details of algebraic geometry to understand the statement of our main result. For more details and comprehensive 
treatments we refer to \cite{BENALGSET,MUMALGGEO}.

We denote by \(\mathbb{R}[X_1,\ldots,X_n]\) the polynomial ring in \(n\) variables formed from the set of polynomials in the variables \(X_1,\ldots,X_n\) with 
coefficients in the field \(\mathbb{R}\). 
\begin{definition}
 \begin{enumerate}
  \item  
   A subset \(V\subseteq\mathbb{R}^n\) is called \emph{semi-algebraic} if it admits some representation of the form
   \[V=\bigcup\limits_{i=1}^s\bigcap\limits_{j=1}^{r_i}\left\{x\in\mathbb{R}^n: P_{i,j}(x)\ \sim_{ij}\ 0\right\},\]
   where, for all \(i=1,\ldots,s\) and \(j=1,\ldots,r_i\),
   \begin{enumerate}
    \item
     \(\sim_{ij}\ \in\left\{>,=,<\right\}\)
    \item
     \(P_{i,j}(X)\in\mathbb{R}[X],\ X=(X_1,\ldots,X_n)\).
   \end{enumerate}
  \label{Semi-algebraische Menge}
  \item  
   A subset \(V\subseteq\mathbb{R}^n\) is called \emph{algebraic} if it can be represented as
   \[V=\left\{x\in\mathbb{R}^n: P_1(x)=\ldots=P_k(x)=0\right\}\]
   where \(k\in\mathbb{N}^*\) and \(P_i(X)\in\mathbb{R}[X_1,\ldots,X_n]\) for all \(i=1,\ldots,k\).
  \label{Algebraische Menge}
 \end{enumerate}
\end{definition}
\begin{bemerkung}
 Real algebraic sets can be represented by one single polynomial, namely, if \(V=\left\{P_1=\ldots=P_k=0\right\}\), then we can take \(P:=P_1^2+\ldots+P_k^2\).
\label{Bemerkung 1.2.3}
\end{bemerkung}
\begin{definition}[``Zariski topology'']
 The topology over $\mathbb{R}^n$ for which the algebraic sets in \(\mathbb{R}^n\) are the closed sets is called the \emph{Zariski topology}. 
\label{Satz 1.2.6}
\end{definition}
\begin{bemerkung}
 \begin{enumerate}
  \item
   The Zariski topology is not Hausdorff (i.e. it does not separate points).
  \item
   Every Zariski closed set in \(\mathbb{R}^n\) is also closed in the usual topology on $\mathbb{R}^n$. Thus, the usual topology is finer than the Zariski topology.
  \item
   We define the Zariski closure of a set \(A\) by \(\raisebox{0.15cm}{\scriptsize{$Z$}}\overline{A}:=\bigcap\limits_{\substack{B\text{ Zariski closed}\\B\supseteq A}}B\).
 \end{enumerate}
\label{Bemerkung 1.2.7}
\end{bemerkung}
\begin{definition}
 An algebraic set \(V\subseteq\mathbb{R}^n\) is said to be \emph{irreducible} if it cannot be decomposed as \(V=V_1\cup V_2\), where both \(V_1\) and \(V_2\) are
 algebraic sets and \(V_1\neq V\) and \(V_2\neq V\). \\
 If \(V\) is an irreducible algebraic set, it is also called \emph{algebraic variety}.
\label{Algebraische Varietaet}
\end{definition}
\begin{definition}
 Let \(V\subseteq\mathbb{R}^n\) be an algebraic variety and define the \emph{ideal of} $V$ by 
 \[I(V):=\left\{P\in\bbr[X_1,\ldots,X_n]:\,P(x)=0\quad\forall x\in V\right\}.\] 
 It is an easy consequence of the Hilbert Basis Theorem (cf. for example \cite{LANALG}) that the ideal $I(V)$ has to be finitely generated, i.e. there exist $l\in\bbn^*$ 
 and $Q_1,\ldots,Q_l\in I(V)$ such that $I(V)$ is the ideal generated by these polynomials. We then call 
 \[\rho(V) := \sup\limits_{x\in V} rank\left(\frac{\partial Q_i}{\partial x_j}(x)\right)_{\shortstack{{\scriptsize{$1\leq i\leq l$}}\\{\scriptsize{$1\leq j\leq n$}}}}\]
 the \emph{rank} of the ideal $I(V)$.

 A point \(x_0\in V\) is said to be a \emph{regular point} of V if
 $\rho(V)=rank\left(\frac{\partial Q_i}{\partial x_j}(x_0)\right)_{\shortstack{{\scriptsize{$1\leq i\leq l$}}\\{\scriptsize{$1\leq j\leq n$}}}}$. Otherwise \(x_0\) 
 is called a \emph{singular point} of $V$. We write \(\mathcal{R}(V)\) to denote the set of regular points of \(V\) and \(\mathcal{S}(V)\) for the set of singular points.
\label{Regulaerer Punkt}
\end{definition}
A natural class of maps are those such that preimages of algebraic sets are again algebraic, i.e.\ maps which are continuous with respect to the Zariski topology.
\begin{definition}
 Let \(V\subseteq\mathbb{R}^n\) and \(W\subseteq\mathbb{R}^m\) be algebraic varieties. Then \(f: V\rightarrow W\) is said to be a \emph{regular map}, if all its
 components \((f_i)_{1\leq i\leq m}\) are regular functions, i.e., for all \(i=1,\ldots,m\), there exist \(P_i, Q_i\in\mathbb{R}[X_1,\ldots,X_n]\) such that
 \[V\cap\left\{x\in\mathbb{R}^n: Q_i(x)=0\right\}=\emptyset\quad\text{and}\quad f_i(x)=\frac{P_i(x)}{Q_i(x)}\quad\forall x\in V.\]
\label{Regulaere Abbildung}
\end{definition}
\begin{proposition}
 Let \(V\subseteq\mathbb{R}^n\), \(W\subseteq\mathbb{R}^m\) be algebraic varieties and \(f: V\rightarrow W\) a regular map. Then f is continuous with respect 
 to the Zariski topology.
\label{Proposition 1.2.17}
\end{proposition}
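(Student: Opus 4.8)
The plan is to show directly that $f$ pulls back Zariski-closed subsets of $W$ to Zariski-closed subsets of $V$, which is exactly continuity for the Zariski topology. A closed subset of $W$ has the form $Z = W \cap \{y \in \mathbb{R}^m : R_1(y) = \cdots = R_k(y) = 0\}$ for some $R_1, \ldots, R_k \in \mathbb{R}[Y_1, \ldots, Y_m]$ (one could even take $k = 1$ by Remark \ref{Bemerkung 1.2.3}, though this is not needed), so it suffices to treat a single polynomial $R$ and show that $\{x \in V : R(f(x)) = 0\}$ is the trace on $V$ of an algebraic subset of $\mathbb{R}^n$.

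First I would write $f = (f_1, \ldots, f_m)$ with $f_i = P_i/Q_i$ for polynomials $P_i, Q_i \in \mathbb{R}[X_1, \ldots, X_n]$ such that $Q_i$ has no zero on $V$ (Definition \ref{Regulaere Abbildung}). Put $d := \deg R$ and $Q := \prod_{i=1}^m Q_i^{d}$. The key elementary observation is that $Q \cdot (R \circ f)$ agrees on $V$ with a genuine polynomial $S \in \mathbb{R}[X_1, \ldots, X_n]$: each monomial $c\, Y_1^{a_1}\cdots Y_m^{a_m}$ of $R$ satisfies $a_1 + \cdots + a_m \le d$, hence $a_i \le d$ for every $i$, so $Q \cdot c\,(P_1/Q_1)^{a_1}\cdots(P_m/Q_m)^{a_m}$ simplifies to the polynomial $c\, P_1^{a_1}\cdots P_m^{a_m}\prod_{i} Q_i^{d-a_i}$, and $S$ is obtained by summing these over the monomials of $R$. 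Thus $Q(x)\, R(f(x)) = S(x)$ for all $x \in V$.

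Since $Q(x) \neq 0$ for every $x \in V$, this gives $R(f(x)) = 0 \iff S(x) = 0$ on $V$. Applying the construction to each $R_j$ (producing $S_j$) yields $f^{-1}(Z) = V \cap \{x \in \mathbb{R}^n : S_1(x) = \cdots = S_k(x) = 0\}$, which is Zariski-closed in $V$; hence $f$ is continuous for the Zariski topology. The argument is entirely routine; the only step requiring a moment's care is the denominator-clearing, which works precisely because the total degree of every monomial of $R$ is bounded by $\deg R$, so a single power $Q_i^{\deg R}$ of each denominator suffices to make the substituted expression polynomial. I foresee no genuine obstacle.
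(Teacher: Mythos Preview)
Your argument is correct and is the standard elementary verification: clear denominators to turn $R\circ f$ into a genuine polynomial $S$ on $V$, then use $Q\neq 0$ on $V$ to conclude $f^{-1}(\{R=0\})=V\cap\{S=0\}$. The denominator-clearing step is handled properly; each exponent $a_i$ in a monomial of $R$ is bounded by $d=\deg R$, so $Q_i^{d-a_i}$ is a nonnegative power and $S$ is indeed polynomial.

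As for comparison: the paper does not supply a proof of this proposition at all. It is placed in the algebraic-geometry appendix as a quoted fact, with the reader referred to the textbooks \cite{BENALGSET,MUMALGGEO} for details. So your write-up is not competing with an alternative argument in the paper; it simply fills in what the authors left to the references, and does so along the expected lines.
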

\section{Theory of Markov Chains}
In this appendix we recall the theorems for Markov chains used in the proof of Theorem \ref{Satz 1.1.18}. To this end let $(X_t)_{t\in\bbn}$ be a Markov chain on 
the state space $(S, \scrb(S))$ with transition probability kernel $P$.
\begin{satz}[cf. \cite{Meynetal1993}, Theorem 14.2.2]$~~$\\
 Suppose that the non-negative functions $V,\,f,\,s$ satisfy the relationship
 \[PV(x)\leq V(x)-f(x)+s(x)\,\forall x\in S,\]
 then, for each $x\in S$ and any stopping time $\tau$, we have
 \[\bbE_x\bigg[\sum_{k=0}^{\tau-1} f(X_k)\bigg]\leq V(x) + \bbE_x\bigg[\sum_{k=0}^{\tau-1} s(X_k)\bigg].\]
\label{Theorem B.1}
\end{satz}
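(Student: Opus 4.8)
The plan is to attach to the chain the auxiliary process $\Phi_n := V(X_n) + \sum_{k=0}^{n-1} f(X_k)$ (with $\Phi_0 := V(X_0)$) and to run the usual supermartingale-type argument, then to pass from bounded times to the general stopping time $\tau$ by optional stopping along $\tau\wedge N$ followed by a monotone convergence step. Since $V,f,s\geq 0$, every expectation occurring in the argument is a well-defined element of $[0,\infty]$, which is precisely what keeps the integrability questions at bay; moreover, if $\bbE_x[\sum_{k=0}^{\tau-1} s(X_k)]=\infty$ the assertion is trivial, so one may and will assume this quantity is finite.

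First I would establish the one-step drift inequality: by the Markov property and the hypothesis $PV\leq V-f+s$,
\[
  \bbE_x[\Phi_{n+1}\mid\calf_n]=\sum_{k=0}^{n} f(X_k)+PV(X_n)\leq\sum_{k=0}^{n-1} f(X_k)+V(X_n)+s(X_n)=\Phi_n+s(X_n),
\]
all steps being legitimate in $[0,\infty]$ since $V(X_n)-f(X_n)+s(X_n)\geq PV(X_n)\geq 0$ (the displayed inequality is just the hypothesis after cancelling $\sum_{k=0}^{n-1}f(X_k)$).

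Next, for the stopping time $\tau$ I would decompose $\Phi_{(n+1)\wedge\tau}$ on $\{\tau\leq n\}$ and on $\{\tau>n\}\in\calf_n$ and apply the one-step inequality on the latter event to obtain $\bbE_x[\Phi_{(n+1)\wedge\tau}]\leq\bbE_x[\Phi_{n\wedge\tau}]+\bbE_x[s(X_n)\,\mathds{1}_{\{\tau>n\}}]$. Under the finiteness assumption each $\bbE_x[s(X_n)\,\mathds{1}_{\{\tau>n\}}]$ is finite (it is dominated by $\bbE_x[\sum_{k=0}^{\tau-1}s(X_k)]$), and then an induction starting from $\bbE_x[\Phi_0]=V(x)<\infty$ shows that each $\bbE_x[\Phi_{n\wedge\tau}]$ is finite, so these inequalities may be summed over $n=0,\dots,N-1$ and telescoped:
\[
  \bbE_x[\Phi_{N\wedge\tau}]\leq V(x)+\sum_{n=0}^{N-1}\bbE_x[s(X_n)\,\mathds{1}_{\{\tau>n\}}]=V(x)+\bbE_x\Big[\sum_{k=0}^{(N\wedge\tau)-1}s(X_k)\Big].
\]
Finally, since $V\geq 0$ gives $\Phi_{N\wedge\tau}\geq\sum_{k=0}^{(N\wedge\tau)-1}f(X_k)$, letting $N\to\infty$ with monotone convergence on the left (this is where $f\geq 0$ is used) and the bound $\sum_{k=0}^{(N\wedge\tau)-1}s(X_k)\leq\sum_{k=0}^{\tau-1}s(X_k)$ on the right yields the claimed inequality.

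The only substantive ingredient is the one-step inequality, which is essentially the hypothesis combined with the Markov property; I expect the main (and rather minor) obstacle to be purely the bookkeeping — justifying the telescoping when some of the sums may be infinite — which is dealt with by the initial reduction to a finite right-hand side and by keeping every quantity in $[0,\infty]$ until the final passage to the limit.
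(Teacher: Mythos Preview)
The paper does not give a proof of this statement; it is quoted without proof in Appendix~B as Theorem~14.2.2 of Meyn and Tweedie, so there is no paper proof to compare against. Your argument is the standard supermartingale/Comparison-Theorem proof (essentially the one in Meyn--Tweedie): form $\Phi_n=V(X_n)+\sum_{k<n}f(X_k)$, obtain the one-step bound $\bbE[\Phi_{n+1}\mid\calf_n]\le\Phi_n+s(X_n)$ from the drift hypothesis, stop at $\tau\wedge N$, telescope, drop $V\ge 0$, and let $N\to\infty$ via monotone convergence. The reduction to finite right-hand side and the remark that $V-f+s\ge PV\ge 0$ handle the $[0,\infty]$-arithmetic correctly; you might also note explicitly that if $V(x)=\infty$ the inequality is trivial, so that the induction base $\bbE_x[\Phi_0]=V(x)<\infty$ is indeed available, but this is a cosmetic point.
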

\begin{proposition}[cf. \cite{Meynetal1993}, Proposition 9.1.7 (ii)]$~~$\\
 Suppose that $(X_t)_{t\in\bbn}$ is $\psi$-irreducible. If there exists some petite set $C\in\scrb(S)$ such that $L(x,C)=1$ for all $x\in S$, then $(X_t)_{t\in\bbn}$ 
 is Harris recurrent. 
\label{Proposition B.2}
\end{proposition}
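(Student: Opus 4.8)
The plan is to unfold the definition of Harris recurrence --- for a $\psi$-irreducible chain it suffices to show $Q(x,A):=\bbP_x(X_t\in A\text{ i.o.})=1$ for every $x\in S$ and every $A\in\scrb(S)$ with $\psi(A)>0$ --- and to reach this through a short chain of strong-Markov arguments whose only non-routine ingredient is a uniform accessibility estimate coming from petiteness of $C$. First I would iterate the hypothesis with the strong Markov property: from an arbitrary $x$ the first return time $T_1:=\tau_C$ is a.s.\ finite, and since $X_{T_1}\in C$ while $L(\,\cdot\,,C)\equiv1$ on \emph{all} of $S$, the successive return times $T_{j+1}:=T_j+\tau_C\circ\theta_{T_j}$ are a.s.\ finite by induction; in particular the chain returns to $C$ infinitely often a.s.\ from every starting point.

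Next, since $L(\,\cdot\,,C)>0$ everywhere, $C$ is accessible; combining this with petiteness of $C$ and $\psi$-irreducibility I would extract, for the fixed accessible set $A$, a constant $\delta>0$ with $\inf_{y\in C}L(y,A)\ge\delta$. This uses the structure theory of petite sets: a petite set is petite with respect to a maximal irreducibility measure, so a minorisation $K_b(y,\,\cdot\,)\ge\psi_b(\,\cdot\,)$ holds uniformly over $y\in C$ for some sampling distribution $b$ (on the positive integers), whence $L(y,A)\ge\psi_b(A)=:\delta>0$ uniformly over $y\in C$, since maximal irreducibility measures are mutually equivalent and $\psi(A)>0$. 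With this in hand, a one-step bound at time $T_1$ gives $\bbP_x(\tau_A=\infty)\le\bbP_x(\tau_A\circ\theta_{T_1}=\infty)=\bbE_x[1-L(X_{T_1},A)]\le1-\delta$, so $g(x):=\bbP_x(\tau_A=\infty)=1-L(x,A)\le1-\delta$ now holds for \emph{every} $x\in S$. Finally, the Markov property yields the identity $g(x)=\bbE_x\big[\mathds{1}_{\{\tau_A>n\}}\,g(X_n)\big]$ for every $n$, hence $g(x)\le(1-\delta)\,\bbP_x(\tau_A>n)\to(1-\delta)\,g(x)$ as $n\to\infty$, which forces $g\equiv0$, i.e.\ $L(\,\cdot\,,A)\equiv1$ on $S$. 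Since in particular $L(\,\cdot\,,A)\equiv1$ on $A$, one more strong-Markov iteration (exactly as in the first step, with $A$ in place of $C$) gives $Q(x,A)=1$ for all $x$; as $A$ was an arbitrary accessible set, $(X_t)_{t\in\bbn}$ is Harris recurrent.

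The step I expect to be the main obstacle is the uniform accessibility estimate $\inf_{y\in C}L(y,A)\ge\delta>0$: it is the only place where petiteness (rather than mere accessibility) of $C$ is genuinely used, and it rests on the fact that a petite set is petite for a maximal irreducibility measure together with the equivalence of maximal irreducibility measures. Everything else --- the two strong-Markov iterations and the bootstrap killing $g$ --- is elementary measure-theoretic bookkeeping; one should only be slightly careful with the degenerate cases (the event $\{T_1=\infty\}$ being $\bbP_x$-null, the choice of a sampling distribution supported on $\bbn^*$, and the distinction between first-hitting and first-return times when the last iteration is run from a point of $A$).
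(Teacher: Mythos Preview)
The paper does not give its own proof of this proposition: it is stated in Appendix~B as a direct citation of \cite[Proposition~9.1.7\,(ii)]{Meynetal1993} and is simply invoked in the proof of Theorem~\ref{Satz 1.1.18}. So there is no ``paper's proof'' to compare against.

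That said, your argument is correct and is essentially the standard proof one finds in Meyn--Tweedie. The one place to be a bit more careful in writing it up is the uniform accessibility step: you need specifically that for a $\psi$-irreducible chain any petite set is $\nu_a$-petite for a sampling measure $a$ and a minorising measure $\nu_a$ that is itself a \emph{maximal} irreducibility measure (this is \cite[Proposition~5.5.5]{Meynetal1993}); then $\psi(A)>0$ forces $\nu_a(A)>0$, and $K_a(y,A)\ge\nu_a(A)$ immediately gives $\inf_{y\in C}L(y,A)\ge\nu_a(A)=:\delta>0$. The remaining strong-Markov iterations and the bootstrap $g(x)=\bbE_x[\mathds{1}_{\{\tau_A>n\}}g(X_n)]\le(1-\delta)\,\bbP_x(\tau_A>n)\to(1-\delta)g(x)$ are routine and correctly stated.
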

\begin{satz}[cf. \cite{Meynetal1993}, Theorem 14.0.1]$~~$\\
 Suppose that the chain $(X_t)_{t\in\bbn}$ is $\psi$-irreducible and aperiodic and let $f\geq 1$ be a function on $S$. Then the following conditions are equivalent:
 \begin{enumerate}[(i)]
  \item
   The chain is positive recurrent with invariant probability measure $\pi$ and
   \[\pi(f)=\int_S\pi(dx)f(x)<\infty.\]
  \item
   There exist some petite set $C\in\scrb(S)$, a positive constant $b<\infty$ and some extended-valued non-negative function $V$ satisfying $V(x_0)<\infty$ for some
   $x_0\in S$ and
   \[\Delta V(x):=PV(x)-V(x)\leq -f(x) + b\mathds{1}_C(x)\,\forall x\in S.\]
 \end{enumerate}
\label{Theorem B.3}
\end{satz}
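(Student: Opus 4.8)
The assertion is Theorem~14.0.1 of \cite{Meynetal1993}, which we shall simply quote in the paper; what follows is a description of how one would derive it from the material already assembled, chiefly the comparison theorem (Theorem~\ref{Theorem B.1}) and the recurrence criterion (Proposition~\ref{Proposition B.2}).

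For the implication (ii) $\Rightarrow$ (i) the plan is to exploit the drift inequality at two strengths. Since $f\geq 1$, condition (ii) yields $PV(x)\leq V(x)-1+b\,\mathds{1}_C(x)$; feeding this into Theorem~\ref{Theorem B.1} with the constant $1$ in the role of $f$, with $s=b\,\mathds{1}_C$ and with the first return time $\tau_C$ to $C$ as stopping time, one obtains $\bbE_x[\tau_C]\leq V(x)+b\,\mathds{1}_C(x)$. The set $\{V<\infty\}$ is absorbing (if $V(x)<\infty$ then $PV(x)<\infty$, so $P(x,\{V=\infty\})=0$) and, being non-empty, of full $\psi$-measure by irreducibility; on it $\tau_C$ is a.s.\ finite, hence $L(x,C)=1$, and after the standard reduction to this full absorbing set Proposition~\ref{Proposition B.2} applies and, $C$ being petite, gives Harris recurrence. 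The chain thus admits an (essentially unique) invariant measure $\pi$. Applying Theorem~\ref{Theorem B.1} once more, now with the given $f$, $s=b\,\mathds{1}_C$ and $\tau=\tau_C$, one gets
\[
 \bbE_x\Big[\sum_{k=0}^{\tau_C-1}f(X_k)\Big]\leq V(x)+b\,\mathds{1}_C(x),\qquad x\in S ;
\]
coupling this with the generalised Kac identity $\pi(g)=\int_C\pi(dx)\,\bbE_x\big[\sum_{k=0}^{\tau_C-1}g(X_k)\big]$, applied to $g=1$ and to $g=f$, reads off both $\pi(S)<\infty$ (positivity) and $\pi(f)<\infty$ --- \emph{provided} the quantity $\int_C\pi(dx)\,V(x)$ is finite, using also that petite sets have finite invariant measure.

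For (i) $\Rightarrow$ (ii) the plan is to produce $V$ explicitly as an expected $f$-weighted excursion length. One chooses a petite set $C$ with $\pi(C)>0$ that is in addition $f$-regular, i.e.\ $b:=\sup_{x\in C}\bbE_x\big[\sum_{k=0}^{\tau_C-1}f(X_k)\big]<\infty$, and sets
\[
 V(x):=\bbE_x\Big[\sum_{k=0}^{\tau_C-1}f(X_k)\Big]\ \ (x\notin C),\qquad V(x):=0\ \ (x\in C).
\]
Conditioning on the first step gives $V(x)=f(x)+\int_{C^c}P(x,dy)\,V(y)$ for $x\notin C$, so that $PV(x)=V(x)-f(x)$ there; for $x\in C$ the same conditioning yields $PV(x)=\bbE_x\big[\sum_{k=0}^{\tau_C-1}f(X_k)\big]-f(x)\leq b-f(x)=V(x)-f(x)+b$. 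Together these are exactly $PV\leq V-f+b\,\mathds{1}_C$, and $V(x_0)<\infty$ holds trivially for $x_0\in C$.

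The main obstacle --- which I have deliberately skipped over in both directions --- is a single technical point: the taboo excursion sums $\bbE_x\big[\sum_{k=0}^{\tau_C-1}f(X_k)\big]$ need not be bounded on an arbitrary petite set $C$ (indeed in our GARCH application the relevant small set is not even bounded), and $\pi$-integrability of such sums over $C$ is not automatic from $\pi(f)<\infty$. Proving that, when $\pi(f)<\infty$, one can always select the petite set so as to be $f$-regular --- equivalently, developing the theory of $f$-regular sets and the accompanying estimates for the sublevel sets $\{V\leq m\}$ and for taboo sums --- is the substantive content of \cite[Chapter~14]{Meynetal1993}, and since we invoke the theorem only as a black box we do not reproduce this part here.
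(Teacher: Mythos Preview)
The paper does not prove this statement at all: it is listed in Appendix~B as a result quoted verbatim from \cite[Theorem~14.0.1]{Meynetal1993}, with no accompanying argument. You correctly identify this in your opening sentence. Your subsequent sketch of how the two implications run in Meyn--Tweedie is accurate in outline and you are honest about the one genuine technical point (the existence of an $f$-regular petite set, respectively the finiteness of $\int_C V\,d\pi$), so there is nothing to correct --- but since the paper's ``proof'' is simply the citation, everything after your first sentence already exceeds what the paper does.
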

\begin{satz}[cf. \cite{Meynetal1993}, Theorem 15.0.1]$~~$\\
 Suppose that the chain $(X_t)_{t\in\bbn}$ is $\psi$-irreducible and aperiodic. If there exist a petite set $C\in\scrb(S)$, constants $b<\infty,\,\beta>0$ and an
 extended-valued function $V\geq 1$ finite at some $x_0\in S$ satisfying
 \[\Delta V(x)\leq -\beta V(x) + b\mathds{1}_C(x)\,\forall x\in S,\]
 then there exist $r>1,\,R<\infty$ such that for any $x\in\left\{y\in S:\,V(y)<\infty\right\}$
 \beqq
  \sum_{n=1}^\infty r^n\left\|P^n(x,\hspace{0.5mm}\cdot\hspace{0.5mm})-\pi\right\|_V\leq RV(x)
 \label{Equation B.1}
 \eeqq
 with $\|\mu\|_V:=\sup\limits_{g:\,|g|\leq V}|\mu(g)|$ for any signed measure $\mu$ defined on $(S,\scrb(S))$.
\label{Theorem B.4}
\end{satz}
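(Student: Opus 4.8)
This statement is \cite[Theorem 15.0.1]{Meynetal1993}, so the most economical route is simply to cite it; to reconstruct the argument I would follow the standard three-step scheme for turning a geometric drift condition into $V$-geometric convergence.

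\emph{Step 1 (iterate the drift).} After replacing $\beta$ by $\min\{\beta,1\}$ if necessary, I would set $\lambda:=1-\beta\in[0,1)$ and apply $P$ repeatedly to $\Delta V\leq-\beta V+b\mathds{1}_C$, obtaining $P^nV(x)\leq\lambda^nV(x)+b/(1-\lambda)$ for all $n$ and all $x$ with $V(x)<\infty$. In particular the hypothesis of Theorem \ref{Theorem B.3} holds with $f=\beta V$, so the chain is positive recurrent with invariant probability measure $\pi$ and $\pi(V)<\infty$, and every sublevel set $\{V\leq c\}$ is petite. A supermartingale argument applied to $\lambda^{-(n\wedge\tau_C)}V(X_{n\wedge\tau_C})$ moreover gives $\bbE_x\big[\lambda^{-\tau_C}V(X_{\tau_C})\big]\leq V(x)$ for $x\notin C$, and hence, since $V\geq1$, a geometric moment of the return time $\tau_C$ controlled by $V(x)$.

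\emph{Step 2 (an atom with geometric return time).} Since $(X_t)_{t\in\bbn}$ is $\psi$-irreducible and aperiodic and $C$ is petite, $C$ is small for a suitable skeleton and the Nummelin splitting produces a split chain $\check X$ with an accessible atom $\alpha$. The geometric drift lifts to $\check X$, and combined with Step~1 it yields a radius $r_0>1$ with $\bbE_\alpha\big[r_0^{\tau_\alpha}\big]<\infty$ and, more generally, a hitting-time bound $\bbE_x\big[r_0^{\sigma_\alpha}\big]\leq\mathrm{const}\cdot V(x)$ for the first passage $\sigma_\alpha$ to $\alpha$.

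\emph{Step 3 (renewal decomposition).} The geometric moment of the return-time law feeds into Kendall's renewal theorem, producing a geometric rate $\rho<1$ for the renewal sequence associated with $\alpha$; the first-entrance--last-exit decomposition of $P^n(x,\cdot)$ around $\alpha$, together with the $V$-linear bound on the first-entrance quantities, then gives $\|P^n(x,\cdot)-\pi\|_V\leq R_0\,V(x)\,\rho^{\,n}$ for some finite $R_0$, and choosing $r\in(1,1/\rho)$ with $R:=R_0\sum_{n\geq1}(r\rho)^n<\infty$ establishes \eqref{Equation B.1}. The genuinely delicate point is Step~2 together with the $V$-norm bookkeeping in Step~3: extracting a geometric moment for the return time to the atom and converting it into geometric convergence measured in the $V$-norm rather than in total variation, with all constants tracked explicitly, is precisely the content of the $V$-norm geometric ergodic theorem of \cite{Meynetal1993}.
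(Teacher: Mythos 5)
The paper offers no proof of this statement at all: it is imported verbatim from Meyn and Tweedie \cite{Meynetal1993} (Theorem 15.0.1), so citing it, as you do, is exactly the paper's own treatment, and your three-step sketch (iterating the drift, Nummelin splitting to an atom with a geometric return-time moment, Kendall/renewal first-entrance--last-exit decomposition in the $V$-norm) is a faithful outline of the argument given in that reference. The only cosmetic point is that to invoke Theorem \ref{Theorem B.3} you should rescale to $V'=\beta^{-1}V$ and $f=V$ so that $f\geq 1$ as that theorem requires --- the same normalisation the paper itself performs in its proof of Theorem \ref{Satz 1.1.18}.
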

%
%
\section*{Acknowledgements}
The authors wish to thank the editor, Thomas Mikosch, the associate editor and the referee for their helpful comments which considerably improved this paper. 
Moreover Florian Fuchs gratefully acknowledges the support of the TUM Graduate School's International School of Applied Mathematics at the Technische Universit\"at
M\"unchen. In addition, Florian Fuchs and Robert Stelzer are grateful for financial support of the TUM Institute for Advanced Study funded by the German Excellence
Initiative.
%

\begin{thebibliography}{10}

\bibitem{Andrews1984}
{\sc Andrews, D. W.~K.}
\newblock Non-{S}trong {M}ixing {A}utoregressive {P}rocesses.
\newblock {\em J. Appl. Probab. 21\/} (1984), 930--934.

\bibitem{Basraketal2002}
{\sc Basrak, B., Davis, R.~A., and Mikosch, T.}
\newblock Regular variation of {GARCH} processes.
\newblock {\em Stochastic Process. Appl. 99\/} (2002), 95--115.

\bibitem{Bauwensetal2006}
{\sc Bauwens, L., Laurent, S., and Rombouts, J. V.~K.}
\newblock Multivariate {GARCH} models: A survey.
\newblock {\em J. Appl. Econometrics 21\/} (2006), 79--109.

\bibitem{BENALGSET}
{\sc Benedetti, R., and Risler, J.-J.}
\newblock {\em Real algebraic and semi-algebraic sets}.
\newblock Hermann, Paris, 1990.

\bibitem{Bickeletal1999}
{\sc Bickel, P.~J., and B\"uhlmann, P.}
\newblock A new mixing notion and functional central limit theorems for a sieve
  bootstrap in time series.
\newblock {\em Bernoulli 5\/} (1999), 413--446.

\bibitem{Bollerslev1986}
{\sc Bollerslev, T.}
\newblock Generalized autoregeressive conditional heteroskedasticity.
\newblock {\em J. Econometrics 31\/} (1986), 307--327.

\bibitem{Bougeroletal1992b}
{\sc Bougerol, P., and Picard, N.}
\newblock Stationarity of {GARCH} processes and of some nonnegative time
  series.
\newblock {\em J. Econometrics 52\/} (1992), 115--127.

\bibitem{BOUGARCH}
{\sc Boussama, F.}
\newblock {\em Ergodicit\'e, m\'elange et estimation dans les mod\`eles {GARCH}}.
\newblock PhD thesis, Universit\'e {P}aris 7, 1998.

\bibitem{Boussama2006}
{\sc Boussama, F.}
\newblock Ergodicit\'e des cha\^ines de {M}arkov \`a valeurs dans une
  vari\'et\'e alg\'ebrique: application aux mod\`eles {GARCH} multivari\'es.
\newblock {\em C. R. Math. Acad. Sci. Paris 343\/} (2006), 275--278.

\bibitem{ComteLieberman2003}
{\sc Comte, F., and Lieberman, O.}
\newblock Asymptotic theory for multivariate {GARCH} processes.
\newblock {\em J. Multivariate Anal. 84\/} (2003), 61--84.

\bibitem{Davydov1973}
{\sc Davydov, Y.~A.}
\newblock Mixing conditions for {M}arkov chains.
\newblock {\em Theory Probab. Appl. 18\/} (1973), 312--328.

\bibitem{DIEELE}
{\sc Dieudonn\'e, J.}
\newblock {\em \'El\'ements d'analyse}, 2nd~ed.
\newblock t. III. Gauthier-Villars, Paris, 1974.

\bibitem{Doukhan1994}
{\sc Doukhan, P.}
\newblock {\em Mixing}, vol.~85 of {\em Lecture Notes in Statistics}.
\newblock Springer, New York, 1994.

\bibitem{Doukhanetal1994}
{\sc Doukhan, P., Massart, P., and Rio, E.}
\newblock The functional central limit theorem for strongly mixing processes.
\newblock {\em Ann. Inst. H. Poincar\'e Sect. B 30\/} (1994), 63--82.

\bibitem{Doukhanetal2008}
{\sc Doukhan, P., and Wintenberger, O.}
\newblock Weakly dependent chains with infinite memory.
\newblock {\em Stochastic Process. Appl. 118\/} (2008), 1997--2013.

\bibitem{Engle1982}
{\sc Engle, R.~F.}
\newblock Autoregressive conditional heteroskedasticity with estimates of the
  variance of {U}nited {K}ingdom inflation.
\newblock {\em Econometrica 50\/} (1982), 987--1008.

\bibitem{ENGMUL}
{\sc Engle, R.~F., and Kroner, K.~F.}
\newblock Multivariate {S}imultaneous {G}eneralized {ARCH}.
\newblock {\em Econometric Theory 11\/} (1995), 122--150.

\bibitem{Hafner2003b}
{\sc Hafner, C.~M.}
\newblock Fourth moment structure of multivariate {GARCH} models.
\newblock {\em Journal of Financial Econometrics 1\/} (2003), 26--54.

\bibitem{Haeggstroem2005}
{\sc H\"aggstr\"om, O.}
\newblock On the central limit theorem for geometrically ergodic {M}arkov
  chains.
\newblock {\em Probab. Theory Related Fields 132\/} (2005), 74--82.

\bibitem{Herrndorf1983}
{\sc Herrndorf, N.}
\newblock Stationary {S}trongly {M}ixing {S}equences {N}ot {S}atisfying the
  {C}entral {L}imit {T}heorem.
\newblock {\em Ann. Probab. 11\/} (1983), 809--813.

\bibitem{HORMAT}
{\sc Horn, R.~A., and Johnson, C.~R.}
\newblock {\em Matrix {A}nalysis}.
\newblock Cambridge University Press, Cambridge, 1985.

\bibitem{Jones2004}
{\sc Jones, G.~L.}
\newblock On the {M}arkov chain central limit theorem.
\newblock {\em Probab. Surv. 1\/} (2004), 299--320.

\bibitem{LANALG}
{\sc Lang, S.}
\newblock {\em Algebra}, {R}ev. 3rd~ed.
\newblock Springer, 2002.

\bibitem{Lindner2009}
{\sc Lindner, A.~M.}
\newblock Stationarity, mixing, distributional properties and moments of
  {GARCH($p,q$)}-processes.
\newblock In {\em Handbook of Financial Time Series\/} (Berlin, 2009), T.~G.
  Andersen, R.~A. Davis, J.-P. Krei{\ss}, and T.~Mikosch, Eds., Springer,
  pp.~43--69.

\bibitem{MeynTweedie1992}
{\sc Meyn, S.~P., and Tweedie, R.~L.}
\newblock Stability of {M}arkovian processes {I}: Criteria for discrete-time
  chains.
\newblock {\em Adv. in Appl. Probab. 24\/} (1992), 542--574.

\bibitem{Meynetal1993}
{\sc Meyn, S.~P., and Tweedie, R.~L.}
\newblock {\em Markov Chains and Stochastic Stability}.
\newblock Springer, London, 1993.

\bibitem{MOKPROP}
{\sc Mokkadem, A.}
\newblock Propri\'et\'es de m\'elange des processus autor\'egressifs polynomiaux.
\newblock {\em Ann. Inst. H. Poincar\'e Probab. Statist. 26\/} (1990), 219--260.

\bibitem{MUMALGGEO}
{\sc Mumford, D.}
\newblock {\em Algebraic {G}eometry {I}, {C}omplex {P}rojective {V}arieties}.
\newblock Springer-Verlag, Berlin, 1976.

\bibitem{POTMAT}
{\sc Potter, J.~E.}
\newblock Matrix {Q}uadratic {S}olutions.
\newblock {\em SIAM J. Appl. Math. 14\/} (1966), 496--501.

\bibitem{Sato1999}
{\sc Sato, K.}
\newblock {\em L\'evy Processes and Infinitely Divisible Distributions},
  vol.~68 of {\em Cambridge Studies in Advanced Mathematics}.
\newblock Cambridge University Press, Cambridge, 1999.

\bibitem{SilvennoinenTeraesvirta2009}
{\sc Silvennoinen, A., and Ter\"asvirta, T.}
\newblock Multivariate {GARCH} models.
\newblock In {\em Handbook of Financial Time Series\/} (Berlin, 2009), T.~G.
  Andersen, R.~A. Davis, J.-P. Krei{\ss}, and T.~Mikosch, Eds., Springer,
  pp.~201--229.

\bibitem{STEREL}
{\sc Stelzer, R.}
\newblock On the relation between the vec and {BEKK} multivariate {GARCH}
  models.
\newblock {\em Econometric Theory 24\/} (2008), 1131--1136.

\bibitem{WARPAR}
{\sc Ward, L.~E.}
\newblock Partially {O}rdered {T}opological {S}paces.
\newblock {\em Proc. Amer. Math. Soc. 5\/} (1954), 144--161.

\end{thebibliography}
%

\end{document}